\newtheorem{thm}{Theorem}[section]
\newtheorem{cor}[thm]{Corollary}
\newtheorem{lem}[thm]{Lemma}
\newtheorem{prop}[thm]{Proposition}
\theoremstyle{definition}
\newtheorem{aspt}{Assumption}
\theoremstyle{remark}
\newtheorem{rem}[thm]{Remark}
\def\@captionheadfont{\scshape\small}
\def\@captionfont{\normalfont\small}
\begin{document}

\title[Uniform Time Average Consistency of Monte Carlo Particle 
Filters]{Uniform Time Average Consistency \\ of Monte Carlo Particle 
Filters}

\author{Ramon van Handel}
\thanks{The author is partially supported by the NSF RTG Grant DMS-0739195.}
\address{Department of Operations Research and Financial Engineering,
Princeton University, Princeton, NJ 08544}
\email{rvan@princeton.edu}

\subjclass[2000]{Primary        93E11;  
                secondary       65C05, 65C35, 37L55}

\keywords{nonlinear filter; uniform convergence; interacting particles;
bootstrap Monte Carlo filter}

\begin{abstract}
We prove that bootstrap type Monte Carlo particle filters approximate the 
optimal nonlinear filter in a time average sense uniformly with respect 
to the time horizon when the signal is ergodic and the particle system 
satisfies a tightness property.  The latter is satisfied without further 
assumptions when the signal state space is compact, as well as in the 
noncompact setting when the signal is geometrically ergodic and the 
observations satisfy additional regularity assumptions.
\end{abstract}

\maketitle

\section{Introduction}

Consider a hidden Markov model of the form
$$
	X_n = f(X_{n-1},\xi_n),\qquad\qquad
	Y_n = h(X_n,\eta_n),
$$
where $(\xi_n)_{n\ge 1}$, $(\eta_n)_{n\ge 0}$ are independent i.i.d.\ 
sequences.  The signal $X_n$ represents a dynamical process of interest, 
but only the noisy observations $Y_n$ are available.  More 
generally, $(X_n)_{n\ge 0}$ may be any Markov process and $(Y_n)_{n\ge 0}$
are assumed to be conditionally independent given $(X_n)_{n\ge 0}$.  Such 
models appear in a wide variety of applications (see, e.g., \cite{DDG01}). 
As the signal is not directly observed, one is generally faced with the 
problem of estimating the signal on the basis of the observations.  
To this end, the \emph{nonlinear filtering} problem aims to compute the 
conditional distribution $\pi_n$ of the signal $X_n$ given the observation 
history $Y_0,\ldots,Y_n$ in a recursive (on-line) fashion.

The theory of nonlinear filtering is a classic topic in probability 
\cite{LS77} and statistics \cite{BPSW70}.  Unfortunately, the theory 
suffers in practice from the fact that the conditional distribution 
$\pi_n$ is an infinite dimensional object.  With the exception of some 
special cases, the filtering recursion can not be represented in a finite 
dimensional fashion and its direct implementation is therefore 
intractable. For this reason, realistic applications have long remained 
limited.

This state of affairs was revolutionized in the early 1990s by 
the discovery \cite{GSS93} of a new class of approximate nonlinear 
filtering algorithms based on Monte Carlo simulation, which are known 
under various names in the literature: bootstrap filters, interacting 
particle filters, sequential Monte Carlo filters, etc.  Such algorithms 
are simple to implement (even for complex models), are computationally 
tractable, typically exhibit excellent performance, and can be rigorously 
proved to converge to the exact nonlinear filter when the number of 
samples is large.  These techniques have consequently been applied in 
problems ranging from robotics to finance, and their theoretical 
properties have been investigated by many authors; we refer to the 
collection \cite{DDG01} for a general introduction to the theory and 
applications of Monte Carlo particle filters, while a detailed overview of 
theoretical developments can be found in the recent monographs
\cite{Del04,CMR05}.

Despite many advances in recent years, however, certain empirically 
observed properties of Monte Carlo particle filters remain poorly understood 
theoretically.  The aim of this paper is to study one such property:
the \emph{uniform} nature of the particle filter approximation.  

\subsection{A toy example}

The uniform nature of particle filter approximations is most easily 
illustrated by means of a simple but illuminating numerical example.
Let us consider the filtering model 
$$
	X_n = 0.9\,X_{n-1}+\xi_n,\quad
	X_0=0,\qquad\qquad
	Y_n = X_n + \eta_n,
$$
where $\xi_n,\eta_n$ are i.i.d.\ $N(0,1)$.  As only the observations are 
available to us, we aim to compute the conditional mean of the signal 
$\mathbf{E}(X_n|Y_0,\ldots,Y_n)$.  In this very special case, it is well 
known that the latter can be computed exactly using a finite dimensional 
algorithm (the Kalman filter).  

\begin{figure}
\centering
\vskip-.8cm
\includegraphics[width=.75\textwidth]{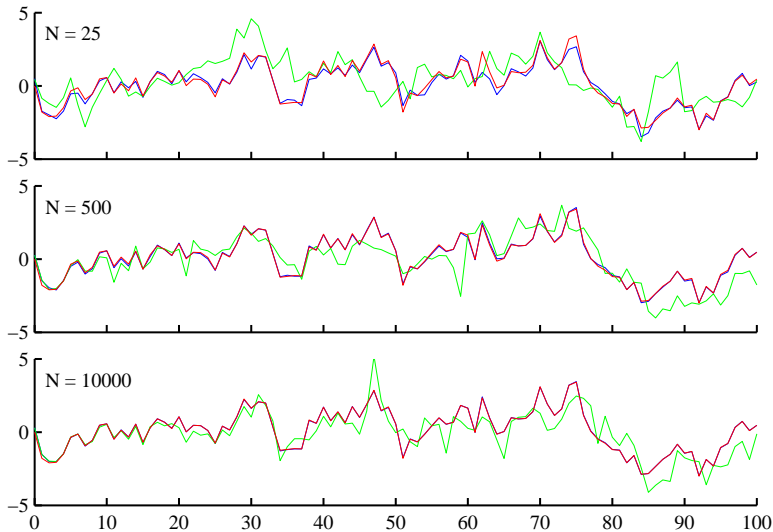}
\vskip-1cm
\caption{The conditional mean $\mathbf{E}(X_n|Y_0,\ldots,Y_n)$ (red) and 
approximations by the bootstrap (blue) and naive (green) particle filter 
for a single sample path of the model described in the text.  The number 
of particles $N$ used for the approximate filters varies in each plot.
\label{fig:sisr}}
\end{figure}

A numerical simulation of this example is shown in figure \ref{fig:sisr}, 
where we have plotted the exact conditional mean and its approximation 
obtained by means of the bootstrap particle filter.  For sake of 
illustration, we have plotted also a different `naive' Monte Carlo 
approximation of the conditional mean which, like the bootstrap filter, is 
easily proved to converge to the exact conditional mean when the number of 
Monte Carlo particles is large.  [The precise details of these algorithms 
will be given in section \ref{sec:bootstrap} below, and are irrelevant to 
the present discussion.] Though both algorithms converge, the difference 
in performance between the two algorithms is striking: the approximation 
error of the naive algorithm grows rapidly in time, while the error of the 
bootstrap algorithm appears to be independent of time (see \cite{DJ01} for 
further computations in this example).

Evidently the fact that both algorithms converge does not capture the 
key qualitative advantage of the bootstrap filter over the naive 
algorithm: the bootstrap filter converges to the exact filter 
\emph{uniformly} in time, while the naive filter does not.  Even if in 
practice the filter is only of interest on a finite time horizon, the 
rapid growth of the error of the naive filter is a severe problem as the 
filter becomes useless after relatively few time steps.  In contrast, 
uniform convergence of the bootstrap filter indicates that its 
approximation error does not accumulate over time, which is essential for 
robust performance.   It is therefore of considerable practical interest 
to establish under what conditions approximate filtering 
algorithms converge uniformly in time.  

The linear example considered here is very special in that the 
filter can be computed exactly.  One would therefore never use a particle 
filter in this setting.  We have chosen an example which admits an exact 
solution as this provides a benchmark with which we can compare the 
performance of particle filter approximations.  On the other hand, exactly 
the same phenomenon as is illustrated in figure \ref{fig:sisr} is observed 
numerically in almost any ergodic filtering problem.  A general 
understanding of this phenomenon is therefore essential in order to 
guarantee reliable performance of approximate filtering algorithms in 
nonlinear filtering problems, which almost never admit an exact 
solution. The aim of this paper is to establish uniform convergence of 
approximate filtering algorithms, and in particular of particle filters,
for a large class of ergodic filtering models.

In the following discussion we denote by $\pi_n$ the conditional 
distribution of $X_n$ given the observation history, and by $\pi_n^N$ its 
particle filter approximation with $N$ particles.  Both are computed 
recursively, which we denote as $\pi_n=F(Y_n,\pi_{n-1}):=F_n\pi_{n-1}$ and 
$\pi_n^N=F^N(Y_n,\pi_{n-1}^N):= F_n^N\pi_{n-1}^N$.

\subsection{Previous work}

Much of what is known about uniform convergence of the particle filter 
has its origins in the work of Del Moral and Guionnet \cite{DmG01}, who 
established a fundamental connection with \emph{filter stability}.  The 
basic idea of this approach is as follows.  The difference between the 
approximate and exact filter can be written as a telescoping sum (setting
for simplicity $\pi_0^N=\pi_0$)
$$
	\pi_n^N - \pi_n = \sum_{k=1}^n\{F_n\cdots F_{k+1}F_k^N\pi_{k-1}^N 
		- F_n\cdots F_{k+1}F_{k}\pi_{k-1}^N\}.
$$
Suppose the the filter is geometrically stable in the following sense:
\begin{equation}
\label{eq:minoriz}
	\|F_n\cdots F_{k+1}\mu-F_n\cdots F_{k+1}\nu\| \le C\,\beta^{n-k}\,
	\|\mu-\nu\|,
\end{equation}
where $\|\,\cdot\,\|$ is a suitable norm on probability measures and 
$C<\infty$, $\beta<1$ are constants.  Then
$$
	\|\pi_n^N - \pi_n\| \le \sum_{k=1}^n C\,\beta^{n-k}\,
	\|F_k^N\pi_{k-1}^N - F_{k}\pi_{k-1}^N\| \le
	\frac{C'}{\sqrt{N}},
$$
where we have used the fact that one time step of the approximate 
filtering algorithm $F_k^N$ introduces an approximation error of order 
$O(N^{-1/2})$ and that the sum over $\beta^{n-k}$ is uniformly bounded.
Thus, evidently, the filter is uniformly convergent at a rate 
$O(N^{-1/2})$.

In order to establish the geometric stability property (\ref{eq:minoriz}) 
of the filter, Del Moral and Guionnet impose the mixing 
assumption $\varepsilon\,\rho(A) \le
\mathbf{P}(X_k\in A|X_{k-1})\le \varepsilon^{-1}\rho(A)$ on the 
signal transition probabilities (for some constant $\varepsilon>0$ and 
probability measure $\rho$) which was originally considered in the filter 
stability context by Atar and Zeitouni \cite{AZ97}.  This is a very strong 
assumption, more stringent even than uniform ergodicity \cite[theorem 
16.0.2]{MT93} of the signal process, and is very difficult to satisfy in 
practice particularly when the signal state space is not compact.  Though 
various methods have been proposed to extend the class of models to which 
the mixing assumption is applicable, essentially all subsequent work on 
uniform convergence of the particle filter 
\cite{LO04,LO03,OR05,Kun05,OR08,OCDM08} has ultimately relied on a form of 
this strong assumption.  Unfortunately, the necessary assumptions are not 
satisfied in many (if not most) models encountered in applications, so 
that the practical applicability of the results established to date 
remains rather limited.

In a sense this conclusion is rather surprising, considering that 
significant progress has been made in recent years in the understanding of 
the filter stability problem (see \cite{CR09} for an extensive review of 
this topic).  For example, Kleptsyna and Veretennikov \cite{KV08} have 
recently established geometric stability $\|F_n\cdots F_{k+1}\mu-F_n\cdots 
F_{k+1}\nu\| \le C(\mu,\nu,Y_{[0,\infty[})\,\beta^{n-k}$ for a particular 
class of non-uniformly ergodic filtering models (see also 
\cite{DFMP08,GLM08} for further variations of this approach), while it has 
been shown that qualitative stability $\|F_n\cdots F_{k+1}\mu-F_n\cdots 
F_{k+1}\nu\|\to 0$ as $n\to\infty$ a.s.\ already holds under minimal 
ergodicity assumptions on the signal \cite{Van08} or under no assumptions 
at all on the signal if the observations are informative \cite{Van08b}. 
The difficulty in applying such results to the uniform convergence problem 
is that the constants in (\ref{eq:minoriz}) are independent of both the 
initial measures $\mu,\nu$ and the observation path $Y_{[0,\infty[}$, 
which is generally not the case when the signal is not uniformly ergodic.
Despite the considerable progress on the filter stability problem, the 
results cited above provide little control over the dependence of the 
constant on the initial measures. This presents a significant hurdle in 
applying these results to the uniform convergence problem.

An entirely different approach for proving uniform convergence properties 
of particle filters was developed by Budhiraja and Kushner \cite{BK01} by 
exploiting certain ergodic properties of nonlinear filters.  Filter 
stability still plays an important role in establishing the ergodic 
theory, but only qualitative stability results are needed, in contrast 
with the quantitative control over the convergence rate and constants 
needed in the approach of Del Moral and Guionnet.  Using the recent 
filter stability results established in \cite{Van08}, the necessary 
ergodic properties can now be established under extremely mild ergodic 
assumptions on the signal process.  In this paper we revisit the approach 
of Budhiraja and Kushner and provide a new set of assumptions for the 
\emph{uniform time average} convergence of bootstrap-type particle filters 
in the following sense ($\|\cdot\|_{\rm BL}$ is the dual 
bounded-Lipschitz norm):
$$
	\lim_{N\to\infty}\,\sup_{T\ge 0}\,\mathbf{E}\left[
	\frac{1}{T}\sum_{k=1}^T\|\pi_k^N-\pi_k\|_{\rm BL}
	\right] = 0.
$$
It should be noted that the time average convergence is weaker than uniform 
convergence established by Del Moral and Guionnet; moreover, this approach 
does not supply a rate of convergence.  On the other hand, we are able to 
demonstrate convergence for a class of non-uniformly ergodic signals
which are presently still out of reach of the more quantitative theory.

\subsection{Organization of the paper}

In section \ref{sec:general} we introduce the basic nonlinear filtering 
problem.  We then develop a general framework for uniform time average 
approximation of the nonlinear filter.
In section \ref{sec:bootstrap} we introduce the bootstrap Monte Carlo 
filtering algorithm and discuss its basic properties.  We show that the 
theory of section \ref{sec:general} can be applied to the bootstrap 
filter, provided that a suitable tightness property can be established.
In section \ref{sec:tightness} we develop two classes of sufficient 
conditions for the requisite tightness property to hold.  Both presume 
that the signal is geometrically ergodic, but different regularity 
assumptions on the observations are required in the two cases to complete 
the proof.  Finally, appendix \ref{app:random} recalls some basic facts 
about weak convergence, while most proofs in the text are postponed to 
appendix \ref{app:proofs}.

\section{A General Approximation Theorem}
\label{sec:general}

The purpose of this section is to introduce the nonlinear filtering 
problem, and to establish a general framework for its approximation 
uniformly in time average (not necessarily by a particle filter). The 
approach of this section follows closely the ideas of Kushner and Huang 
\cite{KH86} and of Budhiraja and Kushner \cite{BK01}, but here we have 
significantly simplified the proofs, generalized the notion of convergence 
and eliminated some technical assumptions.  Our treatment is mostly 
self-contained, but we have postponed the proofs to appendix \ref{app:proofs}.

\subsection{The hidden Markov model and nonlinear filter}

Let $(E,\mathcal{B}(E))$ and $(F,\mathcal{B}(F))$ be Polish spaces endowed 
with their Borel $\sigma$-fields, let 
$P:E\times\mathcal{B}(E)\to[0,1]$ and $\Phi:E\times\mathcal{B}(F)\to[0,1]$ 
be given transition probability kernels, and let 
$\mu:\mathcal{B}(E)\to[0,1]$ be a given probability measure.
We will work with random variables $(X_k,Y_k)_{k\ge 0}$, defined on an 
underlying probability space $(\Omega,\mathcal{F},\mathbf{P})$,
such that $(X_n)_{n\ge 0}$ is a Markov chain with initial 
measure $X_0\sim\mu$ and transition probability $P$, and such that
$(Y_n)_{n\ge 0}$ are conditionally independent given $(X_n)_{n\ge 0}$
with $\mathbf{P}(Y_n\in A|X_n) = \Phi(X_n,A)$. Such a model can always be 
constructed in a canonical fashion, and is called a \textit{hidden Markov 
model} with initial measure $\mu$, transition kernel $P$ and observation 
kernel $\Phi$.  

We will make the following nondegeneracy assumption on the observation 
kernel.

\begin{aspt}[Nondegeneracy]
\label{aspt:nondeg}
There is a $\sigma$-finite measure $\varphi:\mathcal{B}(F)\to\mathbb{R}$ 
and a strictly positive measurable function $\Upsilon:E\times 
F\to\mbox{}]0,\infty[\mbox{}$ such that
$$
        \Phi(x,A) = \int_A \Upsilon(x,y)\,\varphi(dy)
        \quad\mbox{for all }x\in E,~A\in\mathcal{B}(F).
$$
\end{aspt}

We now define the probability kernels $\pi_{k-}:
F^{k}\times\mathcal{B}(E)\to[0,1]$ and
$\pi_k:F^{k+1}\times\mathcal{B}(E)\to[0,1]$ by the following recursion:
for all $y_0,\ldots,y_k\in F$ and $A\in\mathcal{B}(E)$, we have
$$
        \pi_{k-}(y_{0\ldots k-1},A) = 
        \int P(x,A)\,\pi_{k-1}(y_{0 \ldots k-1},dx),\qquad
        \pi_k(y_{0 \ldots k},A) = \frac{\int_A \Upsilon(x,y_k)\,
                \pi_{k-}(y_{0 \ldots k-1},dx)}{
        \int \Upsilon(x,y_k)\,\pi_{k-}(y_{0 \ldots k-1},dx)}
$$
with the initial condition $\pi_{0-}(A) = \mu(A)$.  Then it is well known 
that by the Bayes formula,
$$
        \mathbf{P}(X_k\in A|Y_0,\ldots,Y_{k-1}) = \pi_{k-}(Y_{0 \ldots
                k-1},A),\qquad\quad
        \mathbf{P}(X_k\in A|Y_0,\ldots,Y_k) =
        \pi_k(Y_{0 \ldots k},A).
$$
For notational convenience we will simply write
$\pi_{k-}(A)=\pi_{k-}(Y_{0 \ldots k-1},A)$ and
$\pi_k(A)=\pi_k(Y_{0 \ldots k},A)$.
The kernel $\pi_k$ is called the \textit{nonlinear filter} and $\pi_{k-}$ 
is the \textit{one step predictor} associated with the hidden Markov model
$(X_k,Y_k)_{k\ge 0}$.  Unfortunately, these infinite dimensional 
quantities are typically not explicitly computable.  We aim to obtain
a computationally tractable approximation.

\subsection{Markov and ergodic properties}

In the following, we denote by $\mathcal{P}(E)$ the space of probability 
measures on $(E,\mathcal{B}(E))$ endowed with the topology of weak 
convergence of probability measures and the associated Borel 
$\sigma$-field.  We define on $\mathcal{P}(E)$ the probability distances
$$
	\|\nu-\nu'\|_{\rm BL} = \sup_{f\in\mathrm{Lip}(E)}
	\left|\int f\,d\nu-\int f\,d\nu'\right|,
	\qquad
	\|\nu-\nu'\|_{\rm TV} = \sup_{\|f\|_\infty\le 1}
	\left|\int f\,d\nu-\int f\,d\nu'\right|,
$$
where we have defined $\mathrm{Lip}(E)=\{f:\|f\|_\infty\le 1,~\|f\|_L\le 
1\}$ and $\|f\|_L$ is the Lipschitz constant of $f$.  The dual 
bounded-Lipschitz distance $\|\cdot\|_{\rm BL}$ metrizes the weak 
convergence topology on $\mathcal{P}(E)$, while the total variation 
distance $\|\cdot\|_{\rm TV}$ is strictly stronger.

Let us recall that any probability kernel $\nu:\Omega\times\mathcal{B}(E)
\to[0,1]$ can equivalently be viewed as a $\mathcal{P}(E)$-valued random 
variable on the measure space $\Omega$ (see, e.g., \cite[lemma 1.40]{Kal02}).
In particular, we may consider the filter $(\pi_{k})_{k\ge 0}$ to be a 
$\mathcal{P}(E)$-valued stochastic process adapted to the filtration 
$\mathcal{F}_k^Y= \sigma\{Y_0,\ldots,Y_k\}$.  It is well known that this 
process possesses the Markov property, see, e.g., \cite{Ste89}, and the 
associated ergodic theory will play a key role in the following.

\begin{aspt}[Ergodicity]
\label{aspt:harris}
$(X_k)_{k\ge 0}$ is positive Harris recurrent and aperiodic, i.e., there 
is a (unique) $P$-invariant measure $\lambda\in\mathcal{P}(E)$ such that 
$\|\nu P^k-\lambda\|_{\rm TV}\to 0$ as $k\to\infty$ for every 
$\nu\in\mathcal{P}(E)$.
\end{aspt}

When assumption \ref{aspt:nondeg} holds, we may define the update map 
$\mathsf{U}:F\times\mathcal{P}(E)\to\mathcal{P}(E)$ as
$$
	\mathsf{U}(y,\pi)(A) = 
	\frac{\int I_A(x)\,\Upsilon(x,y)\,\pi(dx)}
	{\int \Upsilon(x,y)\,\pi(dx)}.
$$
The following result collects the various properties of the filter 
that will be used below.  

\begin{prop}
\label{prop:mfe}
Suppose that assumption \ref{aspt:nondeg} holds.  Then the 
$E\times\mathcal{P}(E)$-valued stochastic process 
$(X_k,\pi_{k})_{k\ge 0}$ is Markov with transition kernel $\mathsf{\Pi}:
E\times\mathcal{P}(E)\times\mathcal{B}(E\times\mathcal{P}(E))\to[0,1]$,
$$
	\int f(x',\pi')\,\mathsf{\Pi}(x,\pi,dx',d\pi') =
	\int f(x',\mathsf{U}(y,\pi P))\,\Upsilon(x',y)\,\varphi(dy)\,
	P(x,dx'),
$$
and initial measure $M\in\mathcal{P}(E\times\mathcal{P}(E))$,
$$
	\int f(x,\pi)\,M(dx,d\pi) = 
	\int f(x,\mathsf{U}(y,\mu))\,\Upsilon(x,y)\,\varphi(dy)\,
	\mu(dx).
$$
Moreover, if assumption \ref{aspt:harris} holds, 
then $\mathsf{\Pi}$ possesses a unique invariant measure 
$\Lambda\in\mathcal{P}(E\times\mathcal{P}(E))$.
\end{prop}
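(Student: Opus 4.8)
The plan is to establish the three assertions in turn: the Markov property of $(X_k,\pi_k)_{k\ge0}$ together with the explicit forms of $\mathsf{\Pi}$ and $M$; the existence of a $\mathsf{\Pi}$-invariant measure $\Lambda$; and its uniqueness. The first part is a direct computation with the Bayes recursion (cf.\ \cite{Ste89}); the second uses the two-sided stationary extension of the hidden Markov model; and the third rests on the filter stability results of \cite{Van08}. The last step is where the real work lies.

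\emph{Markov property and initial law.} First one checks that $\mathsf{U}:F\times\mathcal{P}(E)\to\mathcal{P}(E)$ and $\pi\mapsto\pi P$ are Borel, so that the displayed formula for $\mathsf{\Pi}$ defines a genuine transition kernel on $E\times\mathcal{P}(E)$ (routine, using that $\Upsilon>0$ and that $\mathcal{P}(E)$ is Polish; cf.\ appendix \ref{app:random}). The filter satisfies $\pi_k=\mathsf{U}(Y_k,\pi_{k-1}P)$, so $\pi_k$ is a deterministic function of $(\pi_{k-1},Y_k)$. Conditioning on $\mathcal{F}_{k-1}=\sigma(X_0,\ldots,X_{k-1},Y_0,\ldots,Y_{k-1})$ and using the hidden Markov structure — $X_k\sim P(X_{k-1},\cdot)$ given $\mathcal{F}_{k-1}$, and $Y_k\sim\Upsilon(X_k,\cdot)\,\varphi$ given $\mathcal{F}_{k-1}\vee\sigma(X_k)$ — one obtains for bounded measurable $f$
$$
\mathbf{E}[f(X_k,\pi_k)\mid\mathcal{F}_{k-1}]
=\int f(x',\mathsf{U}(y,\pi_{k-1}P))\,\Upsilon(x',y)\,\varphi(dy)\,P(X_{k-1},dx')
=(\mathsf{\Pi}f)(X_{k-1},\pi_{k-1}),
$$
which is $\sigma(X_{k-1},\pi_{k-1})$-measurable. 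Since $\sigma(X_j,\pi_j:j\le k-1)\subseteq\mathcal{F}_{k-1}$, the tower property yields the Markov property of $(X_k,\pi_k)_{k\ge0}$ with transition kernel $\mathsf{\Pi}$; the same computation at $k=0$ with $\pi_0=\mathsf{U}(Y_0,\mu)$ and $X_0\sim\mu$ gives the initial law $M$.

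\emph{Existence of $\Lambda$.} Assume now that $(X_k)$ is positive Harris recurrent and aperiodic with invariant law $\lambda$. Extend the model to a two-sided stationary process $(X_k,Y_k)_{k\in\mathbb{Z}}$ with $X_k\sim\lambda$ for all $k$ (possible by Kolmogorov extension, since a stationary chain admits a two-sided extension and the observations are built conditionally independently). Put $\pi_k^-=\mathbf{P}(X_k\in\cdot\mid\mathcal{Y}_k)$ with $\mathcal{Y}_k=\sigma(Y_j:j\le k)$; by Lévy's theorem applied to a countable generating algebra, $\pi_k^-$ is a well-defined $\mathcal{P}(E)$-valued random variable, equal to the a.s.\ weak limit of the filters initialized with $\lambda$ at time $-n$. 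A routine application of the Bayes formula (using that $Y_{k+1}$ is conditionally independent of $\mathcal{Y}_k$ given $X_{k+1}$, and $X_{k+1}$ is conditionally independent of $\mathcal{Y}_k$ given $X_k$) shows that $\pi_{k+1}^-=\mathsf{U}(Y_{k+1},\pi_k^-P)$. The argument of the previous paragraph then shows that $(X_k,\pi_k^-)_{k\in\mathbb{Z}}$ is Markov with transition kernel $\mathsf{\Pi}$, and it is stationary by construction; hence $\Lambda:=\mathrm{Law}(X_0,\pi_0^-)$ is $\mathsf{\Pi}$-invariant.

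\emph{Uniqueness of $\Lambda$.} Let $\Lambda'$ be any $\mathsf{\Pi}$-invariant measure and run the stationary chain $(X_k,\pi_k)_{k\ge0}$ with law $\Lambda'$; attaching observations $Y_k\sim\Upsilon(X_k,\cdot)\,\varphi$ ($k\ge1$) as dictated by $\mathsf{\Pi}$ makes $(X_k)_{k\ge0}$ together with $(Y_k)_{k\ge1}$ a stationary hidden Markov model, so $(X_k)$ is marginally Markov$(P)$ and stationary, whence the $E$-marginal of $\Lambda'$ equals $\lambda$. Let $\hat\pi_k=\mathbf{P}(X_k\in\cdot\mid Y_1,\ldots,Y_k)$ be the filter of this model initialized with the correct prior $\lambda$; along the $\Lambda'$-chain $\pi_k$ is the filter initialized instead with the random measure $\pi_0$ and fed the same observations, so the filter stability results of \cite{Van08} give $\|\pi_k-\hat\pi_k\|_{\rm TV}\to0$ almost surely. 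Hence $\mathrm{Law}(X_k,\hat\pi_k)$ and $\mathrm{Law}(X_k,\pi_k)\equiv\Lambda'$ have the same weak limit, while extending $(X_k,Y_k)$ to a two-sided stationary model and applying Lévy's theorem as above gives $\mathrm{Law}(X_k,\hat\pi_k)\to\Lambda$; therefore $\Lambda'=\Lambda$. The main obstacle is precisely this stability input: one needs a statement strong enough to allow the wrong initial measure $\pi_0$ to be random and correlated with the entire observation path, and to verify its hypotheses — in particular the appropriate absolute-continuity relation between $\pi_0$ and $\lambda$ — for an arbitrary invariant measure $\Lambda'$. This is what Assumption \ref{aspt:nondeg} together with \cite{Van08} supplies; everything else is bookkeeping with the Bayes formula and weak convergence.
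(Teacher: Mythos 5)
Your Markov-property computation and your construction of the invariant measure $\Lambda$ from the two-sided stationary extension are essentially identical to the paper's arguments. The problem is the uniqueness step, and it sits exactly at the point you yourself flag as ``the main obstacle'' but then do not resolve. You compare the stationary $\Lambda'$-chain against the filter $\hat\pi_k$ initialized with the correct prior $\lambda$, and you concede that this requires a stability statement for a random initial measure $\pi_0$ correlated with the entire observation path, together with ``the appropriate absolute-continuity relation between $\pi_0$ and $\lambda$'' for an \emph{arbitrary} invariant measure $\Lambda'$. No such relation is available: $\Lambda'$ is an abstract fixed point of $\mathsf{\Pi}$, and nothing forces its $\mathcal{P}(E)$-component to bear any absolute-continuity relation to $\lambda$ --- ruling out spurious invariant measures of precisely this kind is the entire content of the uniqueness claim, so you cannot assume it. Asserting that \cite{Van08} ``supplies'' this is not a proof; moreover the almost-sure total-variation convergence you invoke is stronger than what that reference provides (convergence holds in expectation, which is what the paper uses).

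The paper sidesteps the difficulty by comparing two arbitrary invariant measures $\Lambda,\Lambda'$ to \emph{each other} rather than each to the canonical one. Since both have signal marginal $\lambda$, they disintegrate as $\Lambda_x(d\nu)\,\lambda(dx)$ and $\Lambda'_x(d\nu)\,\lambda(dx)$, whence $|\int F\,d\Lambda-\int F\,d\Lambda'|\le\int|\mathsf{\Pi}^jF(x,\nu)-\mathsf{\Pi}^jF(x,\nu')|\,\Lambda_x(d\nu)\,\Lambda_x'(d\nu')\,\lambda(dx)$. The integrand compares two filters with \emph{deterministic} initializations $\nu,\nu'$, driven by the same observations generated by the signal started at $P(x,\cdot)$; this is exactly the three-measure setting of \cite[corollary 5.5]{Van08} (lemma \ref{lem:harris}), which needs no absolute-continuity hypothesis and gives $\mathbf{E}(\|\hat\pi_{j-1}-\hat\pi_{j-1}'\|_{\rm TV})\to0$. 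Your route could in principle be repaired in the same spirit --- condition on $(X_0,\pi_0)=(x,\nu)$ under $\Lambda'$ and compare the filters started from $\nu$ and from $\lambda$ under the observation law of the signal started at $x$, again a statement about deterministic initializations --- but as written the argument rests on a hypothesis you cannot verify, so the uniqueness claim is not established.
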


The proof is given in appendix \ref{app:prop:mfe}.  Let us remark that the 
Markov property is elementary, while uniqueness of the invariant measure 
hinges on recent progress on the filter stability problem \cite{Van08}.

\subsection{A general approximation theorem}

As the filter $\pi_k$ can not be computed exactly in practice, we aim to 
approximate it by a sequence of computationally tractable approximate 
filters $\pi_k^N$ ($N\in\mathbb{N}$), such that $\pi_k^N\to \pi_k$ as 
$N\to\infty$.  The goal of this section is to investigate what assumptions 
should be imposed on the filter approximations so that they converge to 
the exact filter uniformly in time average.  We will subsequently apply 
this result to the setting where $\pi_k^N$ is a bootstrap type Monte Carlo 
particle filter with $N$ particles. However, the results of this section 
are general and could be applied to other types of filter approximation 
also.

We have seen in the previous section that $(\pi_k)_{k\ge 0}$ is a 
$\mathcal{P}(E)$-valued $\mathcal{F}_k^Y$-adapted process, such that 
$(X_k,\pi_k)_{k\ge 0}$ is Markov.  We will consider approximate filters 
$\pi_k^N$ of a similar type, but we allow them to be adapted to a slighly 
larger filtration.  This is needed to account for the random sampling step 
in Monte Carlo particle filters, which introduces additional randomness
into the algorithm.

\begin{aspt}[Approximation]
\label{aspt:approx}
For every $N\in\mathbb{N}$, the process $(\pi_k^N)_{k\ge 0}$
satisfies the following.
\begin{enumerate}
\item $(\pi_k^N)_{k\ge 0}$ is a $\mathcal{P}(E)$-valued 
$\mathcal{F}_k^Y\vee\mathcal{G}$-adapted process, where
$\mathcal{G}$ is independent of $(X_k,Y_k)_{k\ge 0}$.
\item $(X_k,\pi_k^N)_{k\ge 0}$ is Markov with transition kernel
$\mathsf{\Pi}_N$ and initial measure $M_N$.
\end{enumerate}
\end{aspt}

We obtain the following general approximation theorem.

\begin{thm}
\label{thm:approx}
Suppose that assumptions \ref{aspt:nondeg}--\ref{aspt:approx} hold.
Moreover, we make the following one step convergence and tightness
assumptions on the approximating sequence.
\begin{enumerate}
\item For any bounded continuous 
$F:E\times\mathcal{P}(E)\to\mathbb{R}$ and $x_N\to x$, 
$\nu_N\Rightarrow\nu$ as $N\to\infty$, we have 
$$
        \int F(x',\nu')\,\mathsf{\Pi}_{N}(x_N,\nu_N,dx',d\nu')
        \xrightarrow{N\to\infty}
        \int F(x',\nu')\,\mathsf{\Pi}(x,\nu,dx',d\nu').
$$
In addition, we have $M_N\Rightarrow M$ as $N\to\infty$.
\item For any sequence $T_N\nearrow\infty$ as $N\to\infty$,
$$
        \mbox{the family of probability measures}\quad
        \Xi_N(A) = \mathbf{E}\left[\frac{1}{T_N}
        \sum_{k=1}^{T_N}\pi_{k}^{N}(A)\right],\quad
        N\ge 1\quad
        \mbox{is tight}.
$$
\end{enumerate}
Then the sequence $(\pi_k^N)_{k\ge 0}$ converges to
$(\pi_k)_{k\ge 0}$ as $N\to\infty$ uniformly in time average:
$$
        \lim_{N\to\infty}\sup_{T\ge 0}
        \mathbf{E}\left[
        \frac{1}{T}\sum_{k=1}^{T}\|\pi^{N}_{k}-\pi_{k}\|_{\rm BL}
        \right]=0.
$$
\end{thm}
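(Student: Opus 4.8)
The plan is to argue by contradiction, combining a Krylov--Bogolyubov compactness argument on the \emph{joint} (exact, approximate) filter process with the uniqueness of the invariant measure $\Lambda$ from Proposition~\ref{prop:mfe} --- which is where filter stability, and hence Assumption~\ref{aspt:harris}, enters decisively. Suppose the conclusion fails: there are $\varepsilon>0$, $N_j\to\infty$ and $T_j\ge 1$ with $\mathbf{E}[\frac{1}{T_j}\sum_{k=1}^{T_j}\|\pi^{N_j}_k-\pi_k\|_{\rm BL}]\ge\varepsilon$ for all $j$. If $(T_j)$ has a bounded subsequence we may take $T_j\equiv T$ constant; since $M_N\Rightarrow M$ and the one-step convergence in hypothesis~(1) propagate along the filtering recursion, an induction on $k$ shows that the joint law of $(X_k,\pi_k,\pi_k^{N_j})$ converges weakly, for each fixed $k\le T$, to a measure supported on the diagonal $\{\pi=\tilde\pi\}$, so that $\mathbf{E}\|\pi_k^{N_j}-\pi_k\|_{\rm BL}\to0$ because $(\mu,\nu)\mapsto\|\mu-\nu\|_{\rm BL}$ is bounded and jointly weakly continuous; this contradicts the lower bound. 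Hence we may assume $T_j\nearrow\infty$.

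First I would set up the doubled chain. By Assumption~\ref{aspt:approx} and Proposition~\ref{prop:mfe}, $Z_k^N:=(X_k,\pi_k,\pi_k^N)$ is Markov on $E\times\mathcal{P}(E)\times\mathcal{P}(E)$ with a kernel $\bar{\mathsf{\Pi}}_N$ whose two nontrivial marginal kernels are $\mathsf{\Pi}$ and $\mathsf{\Pi}_N$ and whose limiting kernel $\bar{\mathsf{\Pi}}$ governs \emph{two} exact filters driven by a common observation (both $\pi'$ and $\tilde\pi'$ arise from $\pi,\tilde\pi$ by the exact update $\mathsf{U}$ along the same $Y'$). A short diagonal argument shows that hypothesis~(1) forces $\mathsf{\Pi}$ to be Feller and lifts to the joint level: $\int F\,d\bar{\mathsf{\Pi}}_N(z_N,\cdot)\to\int F\,d\bar{\mathsf{\Pi}}(z,\cdot)$ whenever $z_N\to z$, and $\bar{\mathsf{\Pi}}$ is Feller. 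I then form the occupation measures
$$
\bar\Xi_{N_j}(\,\cdot\,)=\mathbf{E}\!\left[\frac{1}{T_j}\sum_{k=1}^{T_j}\delta_{Z_k^{N_j}}(\,\cdot\,)\right].
$$
Tightness of $\{\bar\Xi_{N_j}\}$ reduces to tightness of its three $E$- and $\mathcal{P}(E)$-marginals: the $X$- and $\pi$-marginals have common barycenter $\frac{1}{T_j}\sum_{k\le T_j}\mu P^k$ on $E$, which is tight since $\mu P^k\to\lambda$ (Assumption~\ref{aspt:harris}); the $\pi^N$-marginal has barycenter $\Xi_{N_j}$, tight by hypothesis~(2); and a family of laws on $\mathcal{P}(E)$ is tight exactly when the corresponding family of barycenters on $E$ is tight (a Markov-inequality argument via Prokhorov's theorem, cf.\ Appendix~\ref{app:random}). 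Passing to a subsequence, $\bar\Xi_{N_j}\Rightarrow\bar\Xi_\infty$.

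The core of the argument is that $\bar\Xi_\infty$ is $\bar{\mathsf{\Pi}}$-invariant and diagonally supported. For invariance (the moving-kernel Krylov--Bogolyubov step), from $\|\bar\Xi_{N_j}\bar{\mathsf{\Pi}}_{N_j}-\bar\Xi_{N_j}\|_{\rm TV}\le 2/T_j\to0$ one gets, for bounded continuous $F$ and writing $\bar{\mathsf{\Pi}}_NF(z)=\int F\,d\bar{\mathsf{\Pi}}_N(z,\cdot)$, that $\int F\,d\bar\Xi_{N_j}\to\int F\,d\bar\Xi_\infty$ and (using $\bar{\mathsf{\Pi}}_NF\to\bar{\mathsf{\Pi}}F$ continuously together with $\bar\Xi_{N_j}\Rightarrow\bar\Xi_\infty$) that $\int\bar{\mathsf{\Pi}}_{N_j}F\,d\bar\Xi_{N_j}\to\int\bar{\mathsf{\Pi}}F\,d\bar\Xi_\infty$; since these two left-hand sides differ by $O(1/T_j)$, we obtain $\int\bar{\mathsf{\Pi}}F\,d\bar\Xi_\infty=\int F\,d\bar\Xi_\infty$, i.e.\ $\bar\Xi_\infty\bar{\mathsf{\Pi}}=\bar\Xi_\infty$. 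Its $(X,\pi)$- and $(X,\tilde\pi)$-marginals are then $\mathsf{\Pi}$-invariant, hence both equal $\Lambda$ by Proposition~\ref{prop:mfe}. Realizing $\bar\Xi_\infty$ as the time-$0$ marginal of a stationary two-sided chain $(X_k,\pi_k,\tilde\pi_k)_{k\in\mathbb{Z}}$, both $\pi_0$ and $\tilde\pi_0$ equal the exact filter run from time $-n$ on the common observations, for every $n$; filter stability (the result of \cite{Van08} underlying Proposition~\ref{prop:mfe}) then gives $\|\pi_0-\tilde\pi_0\|_{\rm BL}=\lim_{n\to\infty}\|F_0\cdots F_{-n+1}\pi_{-n}-F_0\cdots F_{-n+1}\tilde\pi_{-n}\|_{\rm BL}=0$ a.s., so $\bar\Xi_\infty(\{\pi=\tilde\pi\})=1$. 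Finally, since $(x,\pi,\tilde\pi)\mapsto\|\pi-\tilde\pi\|_{\rm BL}$ is bounded continuous,
$$
\varepsilon\le\mathbf{E}\!\left[\frac{1}{T_j}\sum_{k=1}^{T_j}\|\pi^{N_j}_k-\pi_k\|_{\rm BL}\right]=\int\|\pi-\tilde\pi\|_{\rm BL}\,d\bar\Xi_{N_j}\xrightarrow{j\to\infty}\int\|\pi-\tilde\pi\|_{\rm BL}\,d\bar\Xi_\infty=0,
$$
a contradiction.

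The main obstacles are the two steps of the last paragraph. The invariance step is delicate because the kernels $\bar{\mathsf{\Pi}}_{N_j}$ move with $j$, so the ordinary Feller Krylov--Bogolyubov theorem does not apply and the argument must be routed through the continuous-convergence form of hypothesis~(1); and, more fundamentally, identifying every invariant measure of the doubled filter as diagonally supported is precisely the point at which qualitative filter stability is indispensable --- without it the scheme yields no conclusion. The bounded-$T$ reduction, the tightness bookkeeping and the barycenter lemma are routine by comparison, following \cite{KH86,BK01}.
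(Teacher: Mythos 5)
Your overall architecture (contradiction, bounded-versus-unbounded $T$ split, occupation measures, a moving-kernel Krylov--Bogolyubov step, uniqueness of the invariant measure, and filter stability) is the right one and matches the paper's in spirit. But there is a genuine gap at the point where you pass to the \emph{doubled} chain $Z_k^N=(X_k,\pi_k,\pi_k^N)$. First, Assumption~\ref{aspt:approx} only asserts that $(X_k,\pi_k^N)$ is Markov with kernel $\mathsf{\Pi}_N$; it says nothing about the triple, and in the abstract setting of Theorem~\ref{thm:approx} the triple need not be Markov at all (nothing in the hypotheses forces $\pi_{k+1}^N$ to be a function of $(\pi_k^N,Y_{k+1})$ and fresh randomness only). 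Second, and more seriously, even granting a Markov triple with kernel $\bar{\mathsf{\Pi}}_N$, your claim that ``a short diagonal argument shows that hypothesis~(1)\ldots lifts to the joint level'' is false: hypothesis~(1) controls only the two \emph{marginal} kernels $\mathsf{\Pi}_N\to\mathsf{\Pi}$, and weak convergence of marginals does not determine, let alone imply convergence of, the joint kernel $\bar{\mathsf{\Pi}}_N$ (the dependence structure between the exact and approximate updates is simply not constrained by the hypotheses). Without continuous convergence $\bar{\mathsf{\Pi}}_NF(z_N)\to\bar{\mathsf{\Pi}}F(z)$ your invariance step for $\bar\Xi_\infty$ collapses, and with it the identification of the limit as diagonally supported. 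The same issue infects your bounded-$T$ reduction, which also invokes convergence of the joint law of $(X_k,\pi_k,\pi_k^{N})$. Your scheme could be salvaged for the bootstrap filter specifically by strengthening hypothesis~(1) to a statement about the doubled kernel and reproving Proposition~\ref{prop:bootconv} at that level, but as a proof of the theorem as stated it does not go through.

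The paper avoids the doubled chain entirely, and this is the key trick you are missing. Since $\mathcal{G}$ is independent of $(X_k,Y_k)_{k\ge 0}$, one has $\pi_k(f)=\mathbf{E}(f(X_k)\,|\,\mathcal{F}_k^Y\vee\mathcal{G})$, whence
$$
\mathbf{E}\bigl(\{\pi_k^N(f)-\pi_k(f)\}^2\bigr)=\mathbf{E}\bigl(F(X_k,\pi_k^N)\bigr)+\mathbf{E}\bigl(\pi_k(f)^2\bigr),\qquad F(x,\nu)=\nu(f)^2-2\,f(x)\,\nu(f).
$$
The first term involves only the law of $(X_k,\pi_k^N)$, which is exactly what hypothesis~(1) and the occupation-measure limit (the paper's Lemma~\ref{lem:erg}, essentially your Krylov--Bogolyubov step restricted to the pair) control; the second is a marginal quantity handled by a separate argument showing $\limsup_k\mathbf{E}(\pi_k(f)^2)\le\mathbf{E}(\mathbf{E}(f(\tilde X_0)|\tilde Y_0,\tilde Y_{-1},\ldots)^2)$ via Jensen, stationarity and the tail-$\sigma$-field identity of \cite{Van08}. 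The joint law of $(\pi_k,\pi_k^N)$ never enters. The upgrade from fixed test functions $f$ to the $\|\cdot\|_{\rm BL}$ norm is then done with an Arzel\`a--Ascoli covering of $\mathrm{Lip}(E)$ on compacts rather than your (correct, but unavailable here) observation that $\|\cdot-\cdot\|_{\rm BL}$ is jointly weakly continuous. Your tightness bookkeeping via barycenters and your use of filter stability to kill the diagonal defect are both sound in themselves; the unjustified step is the existence and continuous convergence of the doubled kernel.
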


The proof of this theorem is given in appendix \ref{app:thm:approx}.  

Let us note that the uniform time average convergence guaranteed by the 
theorem allows us to answer related convergence questions as well. For 
example, we can prove that the time average mean square error of the 
estimates obtained from the approximate filter converges to the time 
average mean square error of the estimates obtained from the exact filter, 
uniformly in time.

\begin{cor}
\label{cor:esterr}
Suppose that the assumptions of theorem \ref{thm:approx} are satisfied.  
Then
$$
	\lim_{N\to\infty}\,
	\sup_{T\ge 0}\,
	\mathbf{E}\left[\left|
        \frac{1}{T}\sum_{k=1}^{T}
	\left(f(X_k)-\int f\,d\pi_k^N\right)^2 -
        \frac{1}{T}\sum_{k=1}^{T}
	\left(f(X_k) -
	\int f\,d\pi_k\right)^2\right|
	\right] = 0
$$
for any bounded continuous function $f$.
\end{cor}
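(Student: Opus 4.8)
The plan is to reduce the mean-square-error statement to the already-established bounded-Lipschitz convergence of Theorem \ref{thm:approx}. First, expand the square in each time-$k$ term. Writing $a_k = f(X_k)$, $b_k = \int f\,d\pi_k^N$, $c_k = \int f\,d\pi_k$, one has $(a_k-b_k)^2 - (a_k-c_k)^2 = (c_k - b_k)(2a_k - b_k - c_k)$. Since $f$ is bounded, say $\|f\|_\infty \le K$, each of $a_k, b_k, c_k$ is bounded by $K$, so the factor $|2a_k - b_k - c_k| \le 4K$. Hence the quantity inside the outer expectation is bounded, uniformly in $k$ and in the sample point, by
$$
	\left|\frac{1}{T}\sum_{k=1}^T (c_k-b_k)(2a_k-b_k-c_k)\right|
	\le \frac{4K}{T}\sum_{k=1}^T\left|\int f\,d\pi_k^N - \int f\,d\pi_k\right|.
$$
This is the key algebraic simplification: the difference of the two time-averaged squared errors is controlled by the time-averaged absolute difference of the linear functionals $\int f\,d\pi_k^N$ and $\int f\,d\pi_k$, with no dependence on the signal $X_k$ itself beyond the crude bound $4K$.

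Next I would bound $|\int f\,d\pi_k^N - \int f\,d\pi_k|$ by $\|f\|_{\mathrm{BL}'}\,\|\pi_k^N - \pi_k\|_{\mathrm{BL}}$, where the relevant constant is obtained by normalizing $f$: since $f$ is bounded and continuous but not necessarily Lipschitz, one cannot directly write this in terms of $\|\cdot\|_{\mathrm{BL}}$ as defined in the text. There are two ways to handle this. The cleanest is to approximate: fix $\varepsilon > 0$ and choose a bounded Lipschitz $g$ with $\|f - g\|_\infty$ small on a large compact set — but the filter measures $\pi_k, \pi_k^N$ are not a priori concentrated on a fixed compact set. A better route is to invoke the tightness of the family $\Xi_N$ from hypothesis (2) of Theorem \ref{thm:approx} together with tightness of the time-averaged occupation measures of $\pi_k$ (which follows since $(X_k,\pi_k)$ has a unique invariant measure $\Lambda$ by Proposition \ref{prop:mfe}, and more simply since $\lambda$ is invariant for the signal). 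Given $\varepsilon$, pick a compact $D \subseteq E$ with $\mathbf{E}[\frac1T\sum_{k=1}^T \pi_k^N(D^c)]$ and $\mathbf{E}[\frac1T\sum_{k=1}^T \pi_k(D^c)]$ both below $\varepsilon$ uniformly in $N$ and $T$; then choose $g \in C_b(E)$ bounded-Lipschitz with $g = f$ on $D$ and $\|g\|_\infty \le K$. On the good event the integrands agree, and off it the total mass is $\varepsilon$-small, so
$$
	\mathbf{E}\left[\frac1T\sum_{k=1}^T\left|\int f\,d\pi_k^N - \int f\,d\pi_k\right|\right]
	\le \mathbf{E}\left[\frac1T\sum_{k=1}^T\left|\int g\,d\pi_k^N - \int g\,d\pi_k\right|\right] + 4K\varepsilon,
$$
and the first term on the right is at most $\|g\|_L\,\mathbf{E}[\frac1T\sum_{k=1}^T\|\pi_k^N - \pi_k\|_{\mathrm{BL}}]$ after rescaling $g$, which tends to $0$ uniformly in $T$ as $N\to\infty$ by Theorem \ref{thm:approx}.

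Putting these together: taking the $\sup$ over $T\ge 0$ and then $\limsup_{N\to\infty}$, one obtains a bound of the form $\mathrm{const}\cdot\varepsilon$; since $\varepsilon$ was arbitrary, the limit is zero, which is the claim. The main obstacle is the one just highlighted — passing from a merely bounded continuous $f$ to a Lipschitz function so that $\|\cdot\|_{\mathrm{BL}}$ convergence applies — and the resolution is the uniform (in $N$ and $T$) tightness of the time-averaged filter occupation measures, which is exactly what hypothesis (2) of Theorem \ref{thm:approx} supplies for $\pi_k^N$ and what ergodicity of the signal supplies for $\pi_k$ (indeed $\mathbf{E}[\frac1T\sum_k\pi_k(D^c)] = \mathbf{E}[\frac1T\sum_k \mathbf{P}(X_k\in D^c\mid\mathcal F_k^Y)] = \frac1T\sum_k \mathbf{P}(X_k\in D^c)$, which is uniformly small for a suitable compact $D$ by positive Harris recurrence). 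Everything else is the elementary algebra of expanding the square and the crude $L^\infty$ bounds on $f$.
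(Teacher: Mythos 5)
Your opening reduction is exactly the paper's: writing $(a-b)^2-(a-c)^2=(c-b)(2a-b-c)$ and bounding the second factor by $4\|f\|_\infty$ gives the estimate by $4\|f\|_\infty\,\frac1T\sum_{k=1}^T|\pi_k^N(f)-\pi_k(f)|$. Where you diverge is in how you control this last quantity, and there is a genuine gap there. You claim that condition (2) of theorem \ref{thm:approx} lets you choose a single compact $D$ with $\mathbf{E}\bigl[\frac1T\sum_{k=1}^T\pi_k^N(D^c)\bigr]<\varepsilon$ \emph{uniformly in $N$ and $T$}. It does not: that condition asserts tightness of the family $\{\Xi_N\}_{N\ge1}$ built from a single sequence $T_N\nearrow\infty$, so each fixed $N$ is constrained only at the one horizon $T_N$; nothing in the hypotheses rules out that, for a fixed $N$, the averaged occupation measure $\mathbf{E}\bigl[\frac1T\sum_{k\le T}\pi_k^N\bigr]$ loses all its mass to infinity as $T\to\infty$ (a finite subfamily of probability measures on a Polish space is automatically tight, so the small-$N$ members of $\{\Xi_N\}$ impose no constraint at large $T$). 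The repair is the one the paper itself uses: argue by contradiction along subsequences $(Q(N),T_{Q(N)})$, handling bounded $T_{Q(N)}$ with the finite-horizon convergence and unbounded $T_{Q(N)}$ with the tightness hypothesis applied to that particular sequence. A second, smaller, defect: you cannot in general find a Lipschitz $g$ with $g=f$ on a compact $D$, since a continuous function need not be Lipschitz on a compact set; you must settle for $\sup_D|f-g|<\varepsilon$, which is a harmless change.

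More importantly, the entire Lipschitz-approximation detour is unnecessary, and the paper's proof avoids it. The proof of theorem \ref{thm:approx} establishes as intermediate steps lemmas \ref{lem:finhorz0} and \ref{lem:infhorz0}: $\mathbf{E}(\{\pi_k^N(f)-\pi_k(f)\}^2)\to0$ for each fixed $k$, and $\mathbf{E}\bigl[\frac1{T_N}\sum_{k=1}^{T_N}\{\pi_k^N(f)-\pi_k(f)\}^2\bigr]\to0$ along any $T_N\nearrow\infty$, \emph{for every bounded continuous $f$} --- no Lipschitz hypothesis is needed. Accordingly, the paper follows the $4\|f\|_\infty$ bound with Cauchy--Schwarz, $\mathbf{E}\bigl[\frac1T\sum_k|\pi_k^N(f)-\pi_k(f)|\bigr]\le\mathbf{E}\bigl[\frac1T\sum_k\{\pi_k^N(f)-\pi_k(f)\}^2\bigr]^{1/2}$, and then reruns the subsequence argument from the proof of theorem \ref{thm:approx} using those two lemmas. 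If you rewire your argument to invoke these lemmas rather than the $\|\cdot\|_{\rm BL}$ conclusion of the theorem, both of the difficulties above disappear.
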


The proof is given in appendix \ref{app:cor:esterr}.

\begin{rem} \textit{The one step convergence assumption.}
The first condition of theorem \ref{thm:approx} ensures that the 
approximate filter converges to the exact filter on any finite time 
horizon (lemma \ref{lem:finhorz}).  This is certainly a minimal 
requirement for convergence, and is typically easily verified in practice.  
\end{rem}

\begin{rem} \textit{The tightness assumption.}
The second condition of theorem \ref{thm:approx} ensures, roughly 
speaking, that the approximate filter does not lose mass to infinity after 
a long time (at least on average with respect to time and the 
observations).  This is certainly the case for the signal itself by 
assumption \ref{aspt:harris}, and this property is inherited by the exact 
filter by virtue of lemma \ref{lem:bary}.  Tightness of the approximate 
filter is not automatic, however, and needs to be imposed separately.  
Though this, too, is arguably a minimal assumption to ensure convergence 
of the approximate filters, the tightness property appears to be much more 
difficult to demonstrate in practice.  Indeed, this is the main difficulty 
in applying theorem \ref{thm:approx} to Monte Carlo particle filters.

An exception is the case where the signal state space $E$ is compact; we 
state this as a lemma, though the result is entirely obvious and requires 
no proof.

\begin{lem}
\label{lem:cpct}
If $E$ is compact, then the second condition of theorem \ref{thm:approx} 
is automatically satisfied.
\end{lem}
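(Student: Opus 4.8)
The plan is to observe that tightness of a family of probability measures on a compact space is completely automatic, so that essentially nothing needs to be verified here beyond confirming that each $\Xi_N$ is a genuine probability measure on $E$.

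First I would check that $\Xi_N\in\mathcal{P}(E)$ for every $N$. By Assumption \ref{aspt:approx}, each $\pi_k^N$ is a $\mathcal{P}(E)$-valued random variable, so $\omega\mapsto\pi_k^N(\omega,A)$ is measurable for every $A\in\mathcal{B}(E)$ and the expectation defining $\Xi_N$ makes sense. Countable additivity of $\Xi_N$ follows from that of the $\pi_k^N$ together with monotone convergence under the expectation, and $\Xi_N(E)=\mathbf{E}[T_N^{-1}\sum_{k=1}^{T_N}\pi_k^N(E)]=1$. Hence $\Xi_N$ is indeed a probability measure on $E$.

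Next I would invoke the definition of tightness: a family $\{\nu_\alpha\}\subset\mathcal{P}(E)$ is tight if for every $\varepsilon>0$ there is a compact $K\subseteq E$ with $\inf_\alpha\nu_\alpha(K)\ge 1-\varepsilon$. When $E$ is compact one simply takes $K=E$, in which case $\nu_\alpha(K)=1$ for all $\alpha$ trivially. Applying this with $\nu_N=\Xi_N$ yields the second condition of Theorem \ref{thm:approx} for an arbitrary sequence $T_N\nearrow\infty$. (Equivalently, compactness of $E$ makes $\mathcal{P}(E)$ weakly compact by Prokhorov's theorem, so any subset of $\mathcal{P}(E)$—in particular $\{\Xi_N\}_{N\ge 1}$—is relatively compact and hence tight.)

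There is no real obstacle in this argument: the only point requiring any care, and it is entirely routine, is the measurability and countable additivity needed to see that $\Xi_N$ is a bona fide element of $\mathcal{P}(E)$, both of which are immediate consequences of Assumption \ref{aspt:approx}. This is exactly why the statement is recorded without proof.
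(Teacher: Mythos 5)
Your argument is correct and is exactly the observation the paper has in mind: on a compact $E$ one takes $K=E$ in the definition of tightness, which is why the paper records the lemma without proof. Your additional verification that each $\Xi_N$ is a genuine probability measure is routine and consistent with the paper's setup.
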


In the compact setting, however, the generality of the ergodic assumption 
\ref{aspt:harris} is slightly misleading.  Indeed, note that the 
first condition of theorem \ref{thm:approx} implies that the signal 
transition kernel $P$ is Feller.  Therefore, under the mild assumption 
that the support of the signal invariant measure $\lambda$ has nonempty 
interior, compactness of the state space implies that the signal is even
\emph{uniformly ergodic} \cite[theorem 16.2.5 and theorem 6.2.9]{MT93}.
Moreover, if we assume that $x\mapsto\Upsilon(x,y)$ is continuous for 
every $y$ (as we will do in order to prove the first condition of theorem 
\ref{thm:approx}), assumption \ref{aspt:nondeg} and compactness of $E$ 
implies that $\Upsilon(\cdot,y)$ is bounded away from zero for every $y$.
In this setting, uniform convergence could be studied more directly using 
the techniques in \cite{DmG01}.

When $E$ is not compact, a sufficient condition for tightness is the 
following.

\begin{lem}
\label{lem:tight}
If the family $\{\mathbf{E}\pi_k^N:k,N\ge 1\}$ is tight,
the second condition of theorem \ref{thm:approx} holds.
\end{lem}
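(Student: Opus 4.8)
The plan is to observe that Lemma \ref{lem:tight} is an immediate consequence of two elementary facts: a tight family of probability measures on a Polish space is stable under finite convex combinations (a single compact set works simultaneously for all of them), and the barycenter of a Cesàro average of random measures equals the Cesàro average of the individual barycenters.

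First I would unwind the hypothesis. Tightness of the family $\{\mathbf{E}\pi_k^N : k,N\ge 1\}$ means that for every $\varepsilon>0$ there is a compact set $K_\varepsilon\subset E$ with $(\mathbf{E}\pi_k^N)(E\setminus K_\varepsilon)\le\varepsilon$ for all $k,N\ge 1$. Here $\mathbf{E}\pi_k^N\in\mathcal{P}(E)$ denotes the barycenter of the $\mathcal{P}(E)$-valued random variable $\pi_k^N$, that is, the measure $A\mapsto\mathbf{E}[\pi_k^N(A)]$ (see \cite[lemma 1.40]{Kal02}); this is well defined since $\pi_k^N$ is $\mathcal{F}_k^Y\vee\mathcal{G}$-measurable by Assumption \ref{aspt:approx}.

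Next, fix any sequence $T_N\nearrow\infty$ and a Borel set $A\in\mathcal{B}(E)$. For each $k$ the map $\omega\mapsto\pi_k^N(\omega,A)$ is bounded and measurable, so linearity of the expectation gives
$$
	\Xi_N(A) = \mathbf{E}\!\left[\frac{1}{T_N}\sum_{k=1}^{T_N}\pi_k^N(A)\right]
	= \frac{1}{T_N}\sum_{k=1}^{T_N}\mathbf{E}[\pi_k^N(A)]
	= \frac{1}{T_N}\sum_{k=1}^{T_N}(\mathbf{E}\pi_k^N)(A).
$$
In particular $\Xi_N(E)=1$, so $\Xi_N\in\mathcal{P}(E)$, and $\Xi_N$ is precisely the finite convex combination of the barycenters $\mathbf{E}\pi_1^N,\ldots,\mathbf{E}\pi_{T_N}^N$. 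With $K_\varepsilon$ as above,
$$
	\Xi_N(E\setminus K_\varepsilon) = \frac{1}{T_N}\sum_{k=1}^{T_N}
	(\mathbf{E}\pi_k^N)(E\setminus K_\varepsilon) \le \varepsilon
	\qquad\text{for all }N\ge 1,
$$
which is exactly the assertion that $\{\Xi_N : N\ge 1\}$ is tight. This verifies the second condition of Theorem \ref{thm:approx}.

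There is essentially no obstacle here. The only points to check are the interchange of the finite sum and the expectation (immediate, since each summand is bounded by $1$) and the observation that one compact set $K_\varepsilon$ simultaneously controls every convex combination drawn from a tight family. It is worth noting that neither the monotonicity of $T_N$ nor the hypothesis $T_N\to\infty$ enters the argument; all that is used is that each $T_N$ is finite, so that $\Xi_N$ is a genuine finite average. Consequently the hypothesis $\{\mathbf{E}\pi_k^N : k,N\ge 1\}$ tight is in fact equivalent to the (a priori weaker-looking) second condition of Theorem \ref{thm:approx}, since that condition applied with $T_N\equiv m$ for a fixed $m$ recovers tightness of $\{\mathbf{E}\pi_k^N : 1\le k\le m,\ N\ge 1\}$, and $m$ is arbitrary.
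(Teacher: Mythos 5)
Your proof is correct and is precisely the ``straightforward'' argument the paper omits: a single compact set witnessing tightness of the barycenters $\{\mathbf{E}\pi_k^N\}$ controls every finite convex combination of them, and $\Xi_N$ is such a combination. One caveat: the closing remark about equivalence is not justified --- the second condition of theorem \ref{thm:approx} only quantifies over sequences $T_N\nearrow\infty$, so you cannot take $T_N\equiv m$; and even granting tightness of $\{\mathbf{E}\pi_k^N : 1\le k\le m,\ N\ge 1\}$ for each fixed $m$, the compact sets could grow with $m$, so this would not recover tightness of the full family (indeed, taking $\mathbf{E}\pi_k^N=\delta_N$ for $k=1$ and $\delta_{x_0}$ otherwise gives a non-tight family whose Ces\`aro averages along any $T_N\nearrow\infty$ are tight). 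This aside does not affect the validity of the proof of the lemma itself.
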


We omit the proof, which is straightforward.
\end{rem}

\section{The Bootstrap Particle Filter}
\label{sec:bootstrap}

The practical problem in implementing the exact filter is that the 
conditional distribution $\pi_k$ is an infinite dimensional object.
In applying the theory, one must therefore seek finite dimensional 
approximations.  The idea behind particle filters is to approximate
the nonlinear filter by atomic measures with a fixed 
number of particles $N\in\mathbb{N}$, i.e., by measures in the space
$$
        \mathcal{P}_N(E)=\left\{
        \sum_{i=1}^Nw_i\delta_{x_i}:
        x_1,\ldots,x_N\in E,~
        w_1,\ldots,w_N\ge 0,~
        \sum_{i=1}^Nw_i=1\right\}\subset\mathcal{P}(E).
$$
Note that the filtering recursion does not naturally leave the set 
$\mathcal{P}_N(E)$ invariant; therefore, approximation is unavoidable.
The bootstrap particle filter introduces an additional \textit{sampling} 
step in the filtering recursion to project the filter back into the set 
$\mathcal{P}_N(E)$.

To be precise, define the \emph{sampling transition 
kernel} $\mathsf{R}_N:\mathcal{P}(E)\times
\mathcal{B}(\mathcal{P}(E))\to[0,1]$ as
$$
        \int F(\nu)\,\mathsf{R}_N(\rho,d\nu) =
        \int F\left(\frac{1}{N}\sum_{i=1}^N\delta_{x_i}\right)
        \rho(dx_1)\cdots\rho(dx_N).
$$
Then $\mathsf{R}_N(\rho,\cdot)$ is the law of a $\mathcal{P}(E)$-valued 
random variable $\varrho$ that is generated as follows:
\begin{enumerate}
\item Sample $N$ i.i.d.\ random variables $X^1,\ldots,X^N$ from $\rho$.
\item Set $\varrho = \frac{1}{N}\{\delta_{X^1}+\cdots+\delta_{X^N}\}$.
\end{enumerate}
We now introduce the transition kernel for the 
bootstrap particle filter as
$$
	\int f(x',\pi')\,\mathsf{\Pi}_N(x,\pi,dx',d\pi') =
	\int f(x',\mathsf{U}(y,\pi'))\,\mathsf{R}_N(\pi P,d\pi')\,
	\Upsilon(x',y)\,\varphi(dy)\,P(x,dx'),
$$
and we define the initial measure for the bootstrap particle filter as
$$
	\int f(x,\pi)\,M_N(dx,d\pi) = 
	\int f(x,\mathsf{U}(y,\pi))\,\mathsf{R}_N(\mu,d\pi)\,
	\Upsilon(x,y)\,\varphi(dy)\,\mu(dx).
$$
Note, in particular, that by construction $M_N$ and 
$\mathsf{\Pi}_N(x,\pi,\cdot)$ are supported on $E\times\mathcal{P}_N(E)$ 
for any $x,\pi$, so that the bootstrap particle filter is indeed finite 
dimensional in nature.  Moreover, the law of large numbers strongly 
suggests convergence to the exact filter as $N\to\infty$ at least on 
finite time intervals; we will make this precise below by verifying the 
first condition of theorem \ref{thm:approx}.

We have not yet introduced an explicit construction of the random 
variables $(\pi_k^N)_{k\ge 0}$ on the probability space 
$(\Omega,\mathcal{F},\mathbf{P})$.  However, as all our state spaces are 
Polish, it is a standard fact (e.g., along the lines of \cite[proposition 
8.6]{Kal02}) that the joint process $(X_k,Y_k,\pi_k,\pi_k^N)_{k\ge 0}$ can 
be obtained for any $N\ge 1$ by a canonical construction, provided the 
probability space $(\Omega,\mathcal{F},\mathbf{P})$ carries a countable 
family of i.i.d.\ $\mathrm{Unif}(0,1)$-random variables $(\zeta_k)_{k\ge 0}$ 
independent of $(X_k,Y_k)_{k\ge 0}$.  The random variables 
$(\zeta_k)_{k\ge 0}$ provide the additional randomness introduced by 
the sampling steps in the bootstrap filtering algorithm, and the 
construction is such that $\pi_k^N$ is $\mathcal{F}_k^Y\vee\mathcal{G}$-adapted 
with $\mathcal{G}=\sigma\{\zeta_k:k\ge 0\}$.  As it will not be 
needed in what follows, the construction of $(X_k,Y_k,\pi_k,\pi_k^N)_{k\ge 0}$
will be left implicit, but the details of the construction should be 
evident from the bootstrap filtering algorithm \ref{alg:boot} (which is 
clearly very straightforward to implement in practice).

\begin{algorithm}[t]
\SetLine
Sample i.i.d.\ $x_0^i$, $i=1,\ldots,N$ from the initial distribution $\mu$\;
Compute $w_0^i=\Upsilon(x_0^i,Y_0)/
	\sum_{\ell=1}^N\Upsilon(x_0^\ell,Y_0)$, $i=1,\ldots,N$\;
Set $\pi_0^N = \sum_{i=1}^Nw_0^i\delta_{x_0^i}$\;
\For{k=1,\ldots,n}{
Sample i.i.d.\ $\tilde x_{k-1}^i$, $i=1,\ldots,N$ from the distribution
$\pi_{k-1}^N$\;
Sample $x_k^i$ from $P(\tilde x_{k-1}^i,\,\cdot\,)$, $i=1,\ldots,N$\;
Compute $w_k^i=\Upsilon(x_k^i,Y_k)/\sum_{\ell=1}^N
	\Upsilon(x_k^\ell,Y_k)$, $i=1,\ldots,N$\;
Set $\pi_k^N = \sum_{i=1}^Nw_k^i\delta_{x_k^i}$\;
}
\caption{Bootstrap Filtering Algorithm}
\label{alg:boot}
\end{algorithm}

\begin{rem}
A conceptually simpler \emph{naive} particle filter could be constructed 
as follows.  By the Bayes formula, the exact filter at time $k$ can be 
expressed as
$$
	\pi_k(y_0,\ldots,y_k,A) =
	\frac{\mathbf{E}(I_A(X_k)\Upsilon(X_k,y_k)\cdots\Upsilon(X_0,y_0))}{
	\mathbf{E}(\Upsilon(X_k,y_k)\cdots\Upsilon(X_0,y_0))}.
$$
Therefore, by the law of large numbers, we can approximate $\pi_k$ as
follows:
$$
	\pi_k(y_0,\ldots,y_k,A) \approx
	\frac{\sum_{i=1}^N
	I_A(X_k^i)\Upsilon(X_k^i,y_k)\cdots\Upsilon(X_0^i,y_0)}{
	\sum_{i=1}^N\Upsilon(X_k^i,y_k)\cdots\Upsilon(X_0^i,y_0)},
$$
where $(X_0^i,\ldots,X_k^i)$, $i=1,\ldots,N$ are i.i.d.\ samples from the 
law of $(X_0,\ldots,X_k)$.  Indeed, by the law of large numbers, this 
approximation is immediately seen to converge to the exact filter as 
$N\to\infty$.  However, as can be seen in the numerical example in figure 
\ref{fig:sisr}, the convergence is not uniform in time, and in fact the 
performance is quite poor (see \cite{DJ01} for a theoretical perspective).
\end{rem}

Our aim is to prove that the bootstrap particle filter converges uniformly 
in time average.  We will do this by verifying the conditions of theorem 
\ref{thm:approx}.  Clearly assumption \ref{aspt:approx} holds by 
construction, while assumptions \ref{aspt:nondeg} and \ref{aspt:harris} on 
the filtering model will be presumed from the outset.  We now show that 
the first condition of theorem \ref{thm:approx} holds under a mild 
continuity assumption on the filtering model.  Tightness is a much more 
difficult problem, and will be tackled in the next section.

\begin{aspt}[Continuity]
\label{aspt:feller}
The following hold:
\begin{enumerate}
\item $P$ is Feller, i.e., $x\mapsto P(x,\,\cdot\,)$ is continuous;
\item For every $y\in F$, the map $x\mapsto\Upsilon(x,y)$ is 
continuous and bounded.
\end{enumerate}
\end{aspt}

\begin{prop}
\label{prop:bootconv}
Suppose that assumptions \ref{aspt:nondeg} and \ref{aspt:feller} hold.
Then the first condition of theorem \ref{thm:approx} holds true for the 
bootstrap particle filter.  In particular, 
$\mathbf{E}(\|\pi_{k}^{N}-\pi_{k}\|_{\rm BL})\xrightarrow[N\to\infty]{}0$
for any $k<\infty$.
\end{prop}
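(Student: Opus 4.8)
The plan is to verify the two parts of the first condition of Theorem~\ref{thm:approx} — the continuous convergence $\mathsf{\Pi}_N\to\mathsf{\Pi}$ and the weak convergence $M_N\Rightarrow M$ — and then to read off the finite-horizon assertion from Lemma~\ref{lem:finhorz}. The starting point is to rewrite the kernels probabilistically: expanding the definition of $\mathsf{\Pi}_N$, one has $\int F\,d\mathsf{\Pi}_N(x,\pi,\cdot)=\mathbf{E}\bigl[\int F(X',\mathsf{U}(y,\Pi'))\,\Upsilon(X',y)\,\varphi(dy)\bigr]$, where $X'\sim P(x,\cdot)$ and $\Pi'\sim\mathsf{R}_N(\pi P,\cdot)$ are independent, and likewise $\int F\,d\mathsf{\Pi}(x,\pi,\cdot)=\mathbf{E}\bigl[\int F(X',\mathsf{U}(y,\pi P))\,\Upsilon(X',y)\,\varphi(dy)\bigr]$. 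Thus everything reduces to showing that replacing $\pi P$ by its empirical approximation $\Pi'$, together with the perturbations $x_N\to x$ and $\nu_N\Rightarrow\nu$, has a vanishing effect in the limit.

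First I would assemble the ingredients. (i) Feller-ness of $P$ (Assumption~\ref{aspt:feller}(1)) gives $P(x_N,\cdot)\Rightarrow P(x,\cdot)$ and $\nu_N P\Rightarrow\nu P$. (ii) The law of large numbers for empirical measures, which I would take from Appendix~\ref{app:random}, gives $\mathsf{R}_N(\rho_N,\cdot)\Rightarrow\delta_\rho$ whenever $\rho_N\Rightarrow\rho$; in particular $\Pi'\to\nu P$ in probability. (iii) For each fixed $y$ the update map $\pi\mapsto\mathsf{U}(y,\pi)$ is weakly continuous: this is where boundedness, continuity and strict positivity of $x\mapsto\Upsilon(x,y)$ (Assumptions~\ref{aspt:nondeg} and~\ref{aspt:feller}(2)) enter, since for bounded continuous $\phi$ both the numerator $\int\phi\,\Upsilon(\cdot,y)\,d\pi$ and the strictly positive denominator $\int\Upsilon(\cdot,y)\,d\pi$ are weakly continuous in $\pi$. (iv) Scheff\'e's lemma: if $x_N'\to x'$, then $\Upsilon(x_N',\cdot)\to\Upsilon(x',\cdot)$ pointwise and all have total $\varphi$-mass one, hence $\Upsilon(x_N',\cdot)\to\Upsilon(x',\cdot)$ in $L^1(\varphi)$. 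This last point is the device that lets us avoid ever requiring joint continuity of $\Upsilon$ in $(x,y)$ — which is not assumed — because the observation variable $y$ always occurs under the integral $\int\,\cdots\,\Upsilon(x',y)\,\varphi(dy)$.

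Then I would run a Skorokhod-coupling argument combined with the subsubsequence principle. Along an arbitrary subsequence I realize $X_N'\to X'$ a.s.\ (Skorokhod, from $P(x_N,\cdot)\Rightarrow P(x,\cdot)$) and $\Pi'\to\nu P$ a.s.\ (upgrading convergence in probability along a further subsequence), on a common space with $\Pi'$ independent of $X_N'$. For each fixed $y$, joint continuity of $F$ together with continuity of $\mathsf{U}(y,\cdot)$ gives $F(X_N',\mathsf{U}(y,\Pi'))\to F(X',\mathsf{U}(y,\nu P))$ a.s.; combining this with the $L^1(\varphi)$ convergence of $\Upsilon(X_N',\cdot)$ via the generalized dominated convergence theorem (dominating functions $\|F\|_\infty\,\Upsilon(X_N',\cdot)$, whose integrals converge) yields $\int F(X_N',\mathsf{U}(y,\Pi'))\,\Upsilon(X_N',y)\,\varphi(dy)\to\int F(X',\mathsf{U}(y,\nu P))\,\Upsilon(X',y)\,\varphi(dy)$ a.s. Since these quantities are bounded by $\|F\|_\infty$, bounded convergence passes the limit through the expectation, which establishes part~(a). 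Part~(b), $M_N\Rightarrow M$, is the same computation with $X\sim\mu$ and $\Pi\sim\mathsf{R}_N(\mu,\cdot)$; here the underlying law $\mu$ is fixed, so one needs only the plain law of large numbers $\mathsf{R}_N(\mu,\cdot)\Rightarrow\delta_\mu$ and no Scheff\'e step. With the first condition of Theorem~\ref{thm:approx} now verified, the assertion $\mathbf{E}(\|\pi_k^N-\pi_k\|_{\rm BL})\to0$ for fixed finite $k$ is exactly the content of Lemma~\ref{lem:finhorz}.

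I expect the main obstacle to be precisely the decoupling in ingredient~(iv): because $\Upsilon$ is only assumed continuous in $x$ and not jointly in $(x,y)$, one cannot couple the observation variable of $\mathsf{\Pi}_N$ to that of $\mathsf{\Pi}$ and pass to the limit pointwise in $y$; the argument must keep the $\varphi(dy)$-integral intact throughout and control it through the Scheff\'e / generalized-DCT mechanism. Organizing the bookkeeping so that the single outer expectation over $X'$ (and the hidden one over $\Pi'$) is the only place where a weak-convergence limit is taken — with everything under the $y$-integral handled by real-variable convergence theorems — is the delicate part; the remaining estimates are routine.
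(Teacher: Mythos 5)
Your proposal is correct and follows essentially the same route as the paper: the Feller property of $P$, the resampling law of large numbers $\mathsf{R}_N(\nu_N,\cdot)\Rightarrow\delta_\nu$ (which the paper proves as a separate lemma via a barycenter/variance argument rather than importing it from Appendix~\ref{app:random}), and weak continuity of $\pi\mapsto\mathsf{U}(y,\pi)$ for each fixed $y$, followed by a limit under the $\varphi(dy)$-integral and an appeal to Lemma~\ref{lem:finhorz} for the finite-horizon claim. Your Skorokhod coupling is an equivalent reformulation of the paper's direct test-function computation, and your explicit Scheff\'e/generalized dominated convergence step makes precise the final interchange of limits that the paper dispatches with a bare ``dominated convergence''.
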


The proof of this result is given in appendix \ref{app:prop:bootconv}.
From theorem \ref{thm:approx}, we immediately obtain:

\begin{cor}
\label{cor:bootconv}
Suppose that assumptions \ref{aspt:nondeg}, \ref{aspt:harris}, and
\ref{aspt:feller} hold, and that
$$
        \mbox{the family of probability measures}\quad
        \Xi_N(A) = \mathbf{E}\left[\frac{1}{T_N}
        \sum_{k=1}^{T_N}\pi_{k}^{N}(A)\right],\quad
        N\ge 1\quad
        \mbox{is tight}
$$
for any sequence $T_N\nearrow\infty$ as $N\to\infty$.  Then
$$
        \lim_{N\to\infty}\sup_{T\ge 0}
        \mathbf{E}\left[
        \frac{1}{T}\sum_{k=1}^{T}\|\pi^{N}_{k}-\pi_{k}\|_{\rm BL}
        \right]=0
$$
holds true for the bootstrap particle filter.
\end{cor}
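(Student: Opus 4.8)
The plan is simply to invoke Theorem \ref{thm:approx} directly, after checking that each of its hypotheses is in force for the bootstrap particle filter; no genuinely new argument is needed at this stage, since all the substantive work has been carried out in Proposition \ref{prop:bootconv} and is deferred to Section \ref{sec:tightness} for the tightness condition. Concretely, assumptions \ref{aspt:nondeg} and \ref{aspt:harris} are among the standing hypotheses of the corollary, so it suffices to verify assumption \ref{aspt:approx} together with the two numbered conditions of Theorem \ref{thm:approx}.

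First I would note that assumption \ref{aspt:approx} holds by the very construction of the bootstrap filter described above. The canonical construction of $(X_k,Y_k,\pi_k,\pi_k^N)_{k\ge 0}$ realizes $\pi_k^N$ as a measurable functional of $Y_0,\ldots,Y_k$ and of the auxiliary i.i.d.\ $\mathrm{Unif}(0,1)$ variables $(\zeta_k)_{k\ge 0}$; hence $(\pi_k^N)_{k\ge 0}$ is $\mathcal{F}_k^Y\vee\mathcal{G}$-adapted with $\mathcal{G}=\sigma\{\zeta_k:k\ge 0\}$ independent of $(X_k,Y_k)_{k\ge 0}$, which gives part (1). The sample--propagate--reweight structure of Algorithm \ref{alg:boot} shows directly that $(X_k,\pi_k^N)_{k\ge 0}$ is Markov with precisely the transition kernel $\mathsf{\Pi}_N$ and initial law $M_N$ introduced for the bootstrap filter, giving part (2).

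Next, condition (1) of Theorem \ref{thm:approx} --- the one-step weak convergence $\mathsf{\Pi}_N(x_N,\nu_N,\cdot)\Rightarrow\mathsf{\Pi}(x,\nu,\cdot)$ along $x_N\to x$, $\nu_N\Rightarrow\nu$, together with $M_N\Rightarrow M$ --- is exactly the conclusion of Proposition \ref{prop:bootconv}, which is available because assumptions \ref{aspt:nondeg} and \ref{aspt:feller} are assumed. Condition (2) of Theorem \ref{thm:approx}, namely tightness of $\{\Xi_N:N\ge 1\}$ for every sequence $T_N\nearrow\infty$, is precisely the remaining hypothesis of the corollary. With all hypotheses of Theorem \ref{thm:approx} thus verified, the uniform time average convergence of the bootstrap filter follows immediately. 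The genuine difficulty does not reside in this deduction at all but upstream: it lies in the one-step convergence of Proposition \ref{prop:bootconv} and, above all, in establishing the tightness of $\Xi_N$, which --- as the remarks following Theorem \ref{thm:approx} stress --- is the real obstacle and is here taken as an assumption, to be discharged under explicit model conditions in Section \ref{sec:tightness}.
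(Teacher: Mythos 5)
Your proposal is correct and matches the paper's own argument exactly: the paper derives the corollary immediately from Theorem \ref{thm:approx}, having already noted that assumption \ref{aspt:approx} holds by construction of the bootstrap filter, that condition (1) is supplied by Proposition \ref{prop:bootconv}, and that condition (2) is the stated tightness hypothesis. No differences to report.
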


\section{Sufficient Conditions for Tightness}
\label{sec:tightness}

By corollary \ref{cor:bootconv}, all that remains to prove in order to 
establish uniform time average consistency of the boostrap particle filter 
is the tightness of particle system generated by the algorithm---i.e., we 
must rule out the possibility that the particle system loses mass to 
infinity after running for a long time.  It seems intuitively plausible 
that this can be proved under rather general conditions, as both the 
signal and filter are already ergodic (see assumption \ref{aspt:harris} 
and \cite{Van08}) and the sampling step in the bootstrap algorithm does 
not change the center of mass of the filter.

Unfortunately, the tightness problem appears to be much more difficult 
than one might expect.  A rather ominous counterexample in a different 
setting \cite{RRS98} shows that, contrary to intuition, arbitrarily small 
perturbations may cause a Markov chain to become transient (and hence lose 
its tightness property) even when the unperturbed chain is geometrically 
ergodic.  Though the implications to the present setting are unclear, such 
examples suggest that the problem may be delicate and that tightness can 
not be taken for granted.  In this section, we will provide two sets of 
general sufficient conditions under which tightness can be verified for 
the bootstrap particle filter.  Both sets of conditions require geometric 
ergodicity of the signal (which is stronger than assumption 
\ref{aspt:harris}), and each imposes a different set of restrictions on 
the observation structure.

\begin{rem} 
Assumptions \ref{aspt:nondeg}, \ref{aspt:harris}, and \ref{aspt:feller} 
are very mild and are satisfied by the majority of ergodic filtering 
problems.  In contrast, the sufficient conditions for tightness 
below are rather restrictive, and in this sense our results are not 
entirely satisfactory---establishing tightness under minimal ergodicity 
and observation assumptions remains an open problem.  Nonetheless, the 
tightness property is purely qualitative and thus appears to be 
significantly more tractable than the quantitative controls required in 
other approaches to the uniform convergence problem (indeed, the general
conditions imposed below are still out of reach of other approaches).  
Another interesting possibility is that tightness might be achieved by 
introducing suitable modifications to the bootstrap filtering algorithm, 
e.g., by means of a periodic resampling scheme or using some form of 
regularization.
\end{rem}

Let us briefly recall the relevant notion of geometric ergodicity.
A function $V:E\to[1,\infty[\mbox{}$ is said to possess \emph{compact 
level sets} if the set $\{x\in E:V(x)\le r\}$ is compact for every 
$r\ge 1$.  Given such a function $V$, we define the $V$-total variation 
distance between $\mu,\nu\in\mathcal{P}(E)$ as
$$
	\|\mu-\nu\|_V =
	\sup_{|f|\le V}\left|\int f\,d\mu-\int f\,d\nu\right|
	= \int V\,d|\mu-\nu|.
$$
We will call the Markov chain $(X_k)_{k\ge 0}$ \emph{geometrically 
ergodic} if there is a function $V:E\to[1,\infty[\mbox{}$ with compact 
level sets, a $P$-invariant measure $\lambda$, and constants $C<\infty$ 
and $\beta<1$ such that
$$
	\|P^k(x,\cdot)-\lambda\|_V \le C\,V(x)\,\beta^k
	\quad\mbox{for all }x\in E.
$$
Note that geometric ergodicity is strictly stronger than assumption 
\ref{aspt:harris}.  Geometric ergodicity is often easily verified in terms 
of Lyapunov-type conditions on the transition kernel and is satisfied in 
many practical applications; see the monograph \cite{MT93} for an 
extensive development of this theory.

\subsection{Case I: bounded observations}

We will first consider the following assumptions.

\begin{aspt}[Tightness: Case I]
\label{aspt:casei}
The following hold.
\begin{enumerate}
\item The signal is geometrically ergodic ($\|P^k(x,\cdot)-\lambda\|_V 
\le C\,V(x)\,\beta^k$, $V$ has compact level sets).
\item There exist strictly positive functions 
$u_+,u_-:F\to\mbox{}]0,\infty[\mbox{}$ such that
$$
	u_-(y)\le\Upsilon(x,y)\le u_+(y)\quad
	\mbox{for all }x\in E,\qquad\qquad
	\int \frac{u_+(y)^2}{u_-(y)}\,\varphi(dy) < \infty.
$$
\end{enumerate}
\end{aspt}

Assumption \ref{aspt:casei} is typically satisfied when the observations 
are of the additive noise type with a bounded observation function.  As an 
example, consider the observation model $Y_k=h(X_k)+\xi_k$ on the observation 
state space $F=\mathbb{R}^d$, where $\xi_k$ are 
i.i.d.\ $N(0,\Sigma)$-random variables independent of $(X_k)_{k\ge 0}$
for some strictly positive covariance matrix $\Sigma$, and 
$h:E\to\mathbb{R}^d$ is a continuous and bounded observation function.  
Then we can set
$$
	\varphi(dy) = \frac{1}{(2\pi)^{d/2}\,|\Sigma|^{1/2}}
	\,\exp\left(-\frac{1}{2}\,y^*\Sigma^{-1}y\right)dy,
	\qquad
	\Upsilon(x,y) = \exp\left(
		y^*\Sigma^{-1}h(x)-\frac{1}{2}\,h(x)^*\Sigma^{-1}h(x)
	\right),
$$
and assumptions \ref{aspt:nondeg} and \ref{aspt:feller} are clearly 
satisfied for this observation model.  Moreover, evidently
$$
	u_-(y) = \exp\left(
        -\left[\|y\|+\frac{1}{2}\,\|h\|_\infty\right]
	\|\Sigma^{-1}\|\,\|h\|_\infty
        \right),\qquad
	u_+(y) = \exp\left(
	\|y\|\,\|\Sigma^{-1}\|\,\|h\|_\infty\right),
$$
where $\|h\|_\infty = \sup_{x\in E}\|h(x)\|$, satisfy the requirement
in assumption \ref{aspt:casei}.

\subsection{Case II: strongly unbounded observations}

To satisfy assumption \ref{aspt:casei}, the observation function $h$ will 
generally need to be bounded.  Our second set of assumptions is 
essentially the opposite scenario: we consider an observation model where 
$h$ is strongly unbounded, i.e., converges to infinity in every direction 
(the requirement below that $\|h\|$ has compact level sets).

\begin{aspt}[Tightness: Case II]
\label{aspt:caseii}
Let $F=\mathbb{R}^d$, and suppose that $Y_k=h(X_k)+\sigma(X_k)\,\xi_k$
where $\xi_k$ are i.i.d.\ random variables independent of $(X_k)_{k\ge 0}$.
We assume the following:
\begin{enumerate}
\item The signal is geometrically ergodic ($\|P^k(x,\cdot)-\lambda\|_V 
\le C\,V(x)\,\beta^k$, $V$ has compact level sets).
\item $h:E\to\mathbb{R}^d$, $\sigma:E\to\mathbb{R}^{d\times d}$ are
continuous, $\varepsilon\|v\|\le\|\sigma(x)v\|\le\varepsilon^{-1}\|v\|$
$\forall\,x,v$ for some $\varepsilon>0$.
\item The law of the observation noise $\xi_k$ has a strictly positive, 
bounded and continuous density $q_\xi:\mathbb{R}^d\to\mbox{}]0,\infty[\mbox{}$
with respect to the Lebesgue measure on $\mathbb{R}^d$.
\item There is a nonincreasing 
$q:[0,\infty[\mbox{}\to\mbox{}]0,\infty[\mbox{}$, a norm $|\cdot|$ on 
$\mathbb{R}^d$, and $a_1,a_2>0$ such that
$$
	a_1\,q(|z|)\le q_\xi(z)\le a_2\,q(|z|)
	\quad\mbox{for all }z\in\mathbb{R}^d.
$$
\item There are constants $b_1,b_3>0$, $b_2,b_4\in\mathbb{R}$, and $p>0$
with $\mathbf{E}(\|\xi_k\|^p)<\infty$, such that
$$
	b_1\|h(x)\|^p+b_2\le V(x)\le b_3\|h(x)\|^p+b_4
	\quad\mbox{for all }x\in E.
$$
\end{enumerate}
\end{aspt}

\begin{rem}
Note that when assumption \ref{aspt:caseii} is satisfied, we may always 
choose $\varphi$ to be the Lebesgue measure and $\Upsilon(x,y) = 
q_\xi(\sigma(x)^{-1}\{y-h(x)\})$, which is strictly positive and 
$x\mapsto\Upsilon(x,y)$ is bounded and continuous for every $y$.  We 
therefore automatically satisfy assumption \ref{aspt:nondeg} and the 
observation part of assumption \ref{aspt:feller}.  Moreover, geometric 
ergodicity implies that assumption \ref{aspt:harris} holds also.
Finally, note that as $V$ is by definition presumed to have compact level 
sets, the assumption implies that $x\mapsto\|h(x)\|$ has compact level 
sets also, i.e., $h(x)$ is strongly unbounded.
\end{rem}

A typical example where assumption \ref{aspt:caseii} is satisfied is the 
following.  Let $E=F=\mathbb{R}^d$, and consider the observation model 
$Y_k=h(X_k)+\xi_k$ where $\xi_k\sim N(0,\Sigma)$ for some strictly 
positive covariance matrix $\Sigma$, and $h(x)=h_0(x)+h_1(x)$ where
$h_0$ is bi-Lipschitz (i.e., it is Lipschitz, invertible, and its inverse 
is Lipschitz) and $h_1$ is a bounded continuous function.  Moreover, 
assume that the signal is geometrically ergodic where $V$ satisfies the 
growth condition $b_1'\|x\|^p+b_2'\le V(x)\le b_3'\|x\|^p+b_4'$ for some 
$p,b_1',b_3'>0$.  Let us verify the requirements of assumption 
\ref{aspt:caseii} in this setting.

First, the law of $\xi_k$ has a density $q_\xi(z)=
\exp(-z^*\Sigma^{-1}z/2)/(2\pi)^{d/2}|\Sigma|^{1/2}$ with respect to the 
Lebesgue measure.  Therefore $q_\xi$ is bounded, continuous, and strictly 
positive, and we may evidently set $|z|^2 = z^*\Sigma^{-1}z$ (which 
defines a norm), $q(v)=\exp(-v^2/2)$ (which is nonincreasing), and 
$a_1=a_2=(2\pi)^{-d/2}|\Sigma|^{-1/2}$.  Moreover, it is easily 
established that
$$
	l_1\|x\| - \|h_0(0)\| - \|h_1\|_\infty \le
	\|h(x)\| \le
	l_2\|x\| + \|h_0(0)\| + \|h_1\|_\infty,
$$
where we have used that $l_1\|x-z\|\le \|h_0(x)-h_0(z)\|\le l_2\|x-z\|$ 
for some $l_1,l_2>0$ by the bi-Lipschitz property of $h_0$.  We may 
therefore estimate
$$
	\frac{b_1'}{C_pl_2^p}\|h(x)\|^p 
	- \frac{b_1'\alpha^p}{l_2^p}
	+ b_2'
	\le V(x)\le
	\frac{C_p b_3'}{l_1^{p}}\|h(x)\|^p+\frac{C_p b_3'
		\alpha^p}{l_1^p}+b_4',
$$
where we have written $(a+b)^p\le C_p(a^p+b^p)$ for $a,b\ge 0$ (one can 
choose $C_p=\max(1,2^{p-1})$) and $\alpha=\|h_0(0)\| + \|h_1\|_\infty$.
Finally, as any Gaussian has finite moments, 
$\mathbf{E}(\|\xi_k\|^p)<\infty$.  

\subsection{Uniform time average consistency}

We have now introduced two sets of assumptions on the filtering model.
Our main result states that either of these assumptions is sufficient for 
uniform time average consistency of the bootstrap particle filter.

\begin{thm} 
\label{thm:main} 
Suppose that either assumptions \ref{aspt:nondeg}, \ref{aspt:feller} and 
\ref{aspt:casei} hold, or that the signal transition kernel $P$ is Feller 
and that assumption \ref{aspt:caseii} holds.  In addition, suppose that 
$\mu(V)<\infty$.  Then the tightness assumption of corollary 
\ref{cor:bootconv} holds, and in particular
$$
        \lim_{N\to\infty}\sup_{T\ge 0}
        \mathbf{E}\left[
        \frac{1}{T}\sum_{k=1}^{T}\|\pi^{N}_{k}-\pi_{k}\|_{\rm BL}
        \right]=0
$$
holds true for the bootstrap particle filter.
\end{thm}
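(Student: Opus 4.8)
The plan is to verify the tightness hypothesis of Corollary~\ref{cor:bootconv}; the conclusion then follows at once. First note that, in either case, Assumptions~\ref{aspt:nondeg}, \ref{aspt:harris} and \ref{aspt:feller} are all in force: in Case~I because geometric ergodicity is stronger than Assumption~\ref{aspt:harris}, and in Case~II by the remark following Assumption~\ref{aspt:caseii}. By Lemma~\ref{lem:tight} it therefore suffices to show that the family of barycenters $\{\mathbf E\pi_k^N:k,N\ge1\}$ is tight, and since $V$ has compact level sets, Markov's inequality $\mathbf E[\pi_k^N(\{V>r\})]\le r^{-1}\mathbf E[\pi_k^N(V)]$ reduces this to the uniform moment estimate
$$
	\sup_{k,N\ge1}\mathbf E\!\left[\int V\,d\pi_k^N\right]<\infty,
$$
which is the true content of the theorem: the particle cloud must not leak mass to infinity, uniformly in time \emph{and} in the number of particles. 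For the exact filter the corresponding bound is immediate, since $\mathbf E[\pi_k(V)]=\mathbf E[V(X_k)]$ stays bounded by geometric ergodicity together with $\mu(V)<\infty$; the issue is to reproduce this for $\pi_k^N$.

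To prove the uniform estimate I would set up a Lyapunov argument for the Markov chain $(X_k,\pi_k^N)$ with transition kernel $\mathsf\Pi_N$. One step, conditionally on $(X_{k-1},\pi_{k-1}^N)$, factorizes as: (i) prediction $\pi_{k-1}^N\mapsto\pi_{k-1}^NP$; (ii) sampling $\pi_{k-1}^NP\mapsto\hat\pi_k\sim\mathsf R_N(\pi_{k-1}^NP,\cdot)$; (iii) Bayes update $\hat\pi_k\mapsto\pi_k^N=\mathsf U(Y_k,\hat\pi_k)$, where $X_k\sim P(X_{k-1},\cdot)$, $Y_k$ has $\varphi$-density $\Upsilon(X_k,\cdot)$, and $\hat\pi_k$ is conditionally independent of $(X_k,Y_k)$. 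Step~(i) contracts the $V$-mass: after passing if necessary to a comparable Lyapunov function, geometric ergodicity supplies a drift inequality $PV\le\gamma V+b$ with $\gamma<1$, $b<\infty$ (cf.\ \cite{MT93}), whence $(\pi P)(V)\le\gamma\pi(V)+b$. Step~(ii) is unbiased: $\mathbf E[\hat\pi_k(V)\mid\pi_{k-1}^N]=\pi_{k-1}^N(PV)$. The aim is to combine these with a suitable bound on step~(iii) to obtain a geometric drift $\mathbf E[W(X_k,\pi_k^N)\mid\mathcal F_{k-1}^Y\vee\mathcal G]\le\delta\,W(X_{k-1},\pi_{k-1}^N)+c$ with $\delta<1$ — if not in one step, then over a fixed block of steps — for an appropriate Lyapunov function $W$ on $E\times\mathcal P(E)$; iterating and bounding $\mathbf E[W(X_0,\pi_0^N)]$ via $\mu(V)<\infty$ then gives the claim.

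The only genuinely delicate point, and the place where the two assumption sets are used essentially, is step~(iii). A crude pointwise bound on the likelihood ratio does not suffice. In Case~I it gives only $\pi_k^N(V)\le\frac{u_+(Y_k)}{u_-(Y_k)}\hat\pi_k(V)$; since $Y_k$ is conditionally independent of $\hat\pi_k$, and $\int\Upsilon(x,\cdot)\,d\varphi=1$ with $\Upsilon\le u_+$, this leads to $\mathbf E[\pi_k^N(V)\mid\mathcal F_{k-1}]\le K\,\pi_{k-1}^N(PV)$ with $K=\int u_+^2/u_-\,d\varphi<\infty$, and $K\gamma$ need not be $<1$, so the one-step recursion does not close. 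Extracting the missing contraction is the heart of the proof: in Case~I one must exploit prediction and update jointly over several time steps, the point being that iterated prediction drives the $V$-mass down geometrically fast while $\int u_+^2/u_-\,d\varphi<\infty$ keeps the accumulated likelihood distortions integrable. In Case~II the update step works in our favour: since $\|h\|$ has compact level sets and $V\asymp\|h\|^p$ by Assumption~\ref{aspt:caseii}(5), a bounded observation $Y_k$ pins $X_k$ — hence the bulk of $\mathsf U(Y_k,\hat\pi_k)$ — into a compact set, which one expects to convert into an estimate $\mathbf E[\pi_k^N(V)\mid\mathcal F_{k-1},X_k]\le\epsilon\,\pi_{k-1}^N(V)+\psi(X_k)$ with $\epsilon<1$ and $\psi$ controlled by $V(X_k)$; the likelihood mass picked up from the tails of $\Upsilon$ is controlled using $a_1q(|z|)\le q_\xi(z)\le a_2q(|z|)$ and $\mathbf E\|\xi_k\|^p<\infty$ (so that $t^pq(t)\to0$), while $\mathbf E[V(X_k)]$ stays bounded by geometric ergodicity and $\mu(V)<\infty$ — which is why in Case~II one carries the signal along, using $W(x,\nu)=\nu(V)+\theta V(x)$ with $\theta$ large enough to absorb the $V(X_k)$ term.

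Granting the drift inequality in each case, $\sup_{k,N}\mathbf E[\pi_k^N(V)]<\infty$ follows, hence $\{\mathbf E\pi_k^N\}$ is tight, hence the hypothesis of Corollary~\ref{cor:bootconv} is met and the stated uniform time average convergence holds. I expect essentially all of the effort to go into verifying the drift through the update step, separately in the two cases.
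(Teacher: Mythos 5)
Your reduction of the theorem to the uniform moment bound $\sup_{k,N}\mathbf{E}\pi_k^N(V)<\infty$ (via Lemma \ref{lem:tight} and the compact level sets of $V$) matches the paper, and you correctly isolate the crux: the one-step update bound $\mathbf{E}\pi_k^N(V)\le c_1\,\mathbf{E}\pi_{k-}^N(V)+c_2$ comes with a constant $c_1$ that may well exceed $1/\gamma$, so the naive one-step drift recursion does not close. The gap is in how you propose to get around this. A drift over a fixed block of $m$ steps does not help: each update inside the block re-incurs the factor $c_1$, so the $m$-step constant is of order $(c_1\gamma)^m$, which diverges precisely in the problematic regime. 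Likewise, in Case II the paper's bound through the update (obtained from Chebyshev's covariance inequality, Lemma \ref{lem:cheb}) again has a constant $C_p^2\kappa^{2p}a_2b_3/\varepsilon^{2p}a_1b_1$ that is in no way forced to be below $1$; the hoped-for contraction $\epsilon<1$ from the observation ``pinning'' the filter is neither established nor needed. So in both cases your argument stalls exactly where you say the heart of the proof lies.

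The missing idea is that one should \emph{not} seek a Lyapunov drift for the measure-valued chain at all. The paper instead compares the barycenters to the exact predictor flow: since the sampling step preserves barycenters, $\mathbf{E}\pi_{\ell-}^N(f)=\mathbf{E}\pi_{\ell-1}^N(Pf)$, one gets the telescoping identity
$$
\mathbf{E}\pi_{k-}^N(f)-\mu P^k(f)=\sum_{\ell=1}^{k}\bigl\{\mathbf{E}\pi_{\ell-1}^N(P^{k-\ell+1}f)-\mathbf{E}\pi_{(\ell-1)-}^N(P^{k-\ell+1}f)\bigr\},
$$
so the error injected by the update at time $\ell$ is propagated forward by the signal semigroup, and geometric ergodicity in the form $\|\nu_1P^j-\nu_2P^j\|_V\le c_3\beta^j\|\nu_1-\nu_2\|_V$ damps it by $\beta^{k-\ell+1}$ by time $k$. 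Feeding in $\|\mathbf{E}\pi_{\ell-1}^N-\mathbf{E}\pi_{(\ell-1)-}^N\|_V\le (c_1+1)\mathbf{E}\pi_{(\ell-1)-}^N(V)+c_2$ and applying the discrete Gr\"onwall lemma \ref{lem:gron} gives a bound $c_4\exp(\beta(c_1+1)c_3/(1-\beta))$ that is uniform in $k$ \emph{regardless of the size of} $c_1$, because the geometric weights $\beta^{k-\ell+1}$ are summable. This is what makes an arbitrarily expansive update step harmless, and it is the step your proposal lacks; the remaining work (verifying the linear update bound under Assumptions \ref{aspt:casei} and \ref{aspt:caseii}, the latter via Lemma \ref{lem:cheb} and the moment bound on $\mathbf{E}\|Y_k\|^p$) you have essentially in hand.
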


The proof is given in appendix \ref{app:thm:main}.

\appendix

\section{Some Basic Facts on Weak Convergence}
\label{app:random}

The purpose of this appendix is to recall some basic facts on weak 
convergence of probability measures and transition kernels that are 
particularly useful in the setting of this paper.

\subsection{Weak convergence of kernels}

We begin by showing that weak convergence of transition probability 
kernels, in a sufficiently strong sense, can be iterated.

\begin{lem}
\label{lem:iter}
Let $K_N:E\times\mathcal{B}(E)\to[0,1]$, $N\in\mathbb{N}$ be a sequence of 
transition kernels on a Polish space $E$, and let $K$ be another such 
kernel.  Then for every bounded continuous $f:E\to\mathbb{R}$
$$
	\int f(z)\,K_{N}(x_N,dz)\xrightarrow{N\to\infty}
	\int f(z)\,K(x,dz)\qquad
	\mbox{whenever }x_N\xrightarrow{N\to\infty}x
$$
if and only if for any $j\ge 1$, we have $\nu_NK_N^j\Rightarrow \nu K^j$ 
as $N\to\infty$ whenever $\nu_N\Rightarrow\nu$.
\end{lem}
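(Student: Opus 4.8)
The plan is to establish the two implications separately; the content is genuinely one-sided, since the passage from the iterated statement to the one-step statement is trivial while all the real work lies in the converse. For the easy direction, suppose $\nu_N K_N^j \Rightarrow \nu K^j$ holds for every $j$ whenever $\nu_N\Rightarrow\nu$, and let $x_N\to x$ in $E$. Taking $\nu_N=\delta_{x_N}$, $\nu=\delta_x$ (so that $\nu_N\Rightarrow\nu$, as $z\mapsto\delta_z$ is continuous into the weak topology) and $j=1$ gives $K_N(x_N,\cdot)=\delta_{x_N}K_N\Rightarrow\delta_xK=K(x,\cdot)$, which is precisely the asserted convergence $\int f\,dK_N(x_N,\cdot)\to\int f\,dK(x,\cdot)$ for every bounded continuous $f$.

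For the converse I would argue by induction on $j$, with the base case $j=1$ carrying the weight. The base case asserts: if $K_N(x_N,\cdot)\Rightarrow K(x,\cdot)$ whenever $x_N\to x$, then $\nu_NK_N\Rightarrow\nu K$ whenever $\nu_N\Rightarrow\nu$. To prove it, fix a sequence $\nu_N\Rightarrow\nu$ and invoke the Skorokhod representation theorem (applicable since $E$ is Polish): on a suitable probability space there are random variables $\xi_N\sim\nu_N$ and $\xi\sim\nu$ with $\xi_N\to\xi$ almost surely. Fix a bounded continuous $f:E\to\mathbb{R}$ and set $g_N(z)=\int f\,dK_N(z,\cdot)$ and $g(z)=\int f\,dK(z,\cdot)$; these are Borel measurable and bounded by $\|f\|_\infty$. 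Applying the hypothesis pathwise along the deterministic sequence $\xi_N(\omega)\to\xi(\omega)$ shows $g_N(\xi_N)\to g(\xi)$ almost surely, and since the $g_N(\xi_N)$ are uniformly bounded, dominated convergence gives $\int f\,d(\nu_NK_N)=\mathbf{E}[g_N(\xi_N)]\to\mathbf{E}[g(\xi)]=\int f\,d(\nu K)$. As $f$ was arbitrary, $\nu_NK_N\Rightarrow\nu K$. For the inductive step, assume the statement for $j$; given $\nu_N\Rightarrow\nu$, the induction hypothesis yields $\mu_N:=\nu_NK_N^j\Rightarrow\nu K^j=:\mu$, whence the base case applied to the sequence $(\mu_N)$ gives $\nu_NK_N^{j+1}=\mu_NK_N\Rightarrow\mu K=\nu K^{j+1}$.

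The only real obstacle is the base case, where the hypothesis (phrased purely in terms of deterministic convergent sequences of points) must be promoted to a statement about weak convergence of measures; the Skorokhod coupling is exactly the device that effects this, after which bounded convergence closes the argument. It is worth emphasizing that no Feller property of $K$ or of the $K_N$ is needed: the functions $g_N=K_Nf$ and $g=Kf$ are only known to be Borel and bounded, and that is all the argument uses.
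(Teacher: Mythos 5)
Your proof is correct and follows essentially the same route as the paper's: the trivial direction via $\nu_N=\delta_{x_N}$, and the converse by reducing to $j=1$ via induction and then handling the base case with the Skorokhod representation theorem plus dominated convergence. Your write-up is merely a bit more explicit about the inductive step and about why only measurability (not continuity) of $K_Nf$ and $Kf$ is needed.
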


\begin{proof}
The if part follows trivially by choosing $\nu_N=\delta_{x_N}$,
$\nu=\delta_x$, and $j=1$.  To prove the only if part, suppose we have 
established that the result holds for $j\le k$.  Then it clearly holds 
also for $j\le k+1$.  By induction, it therefore suffices to consider the 
case $j=1$.

As $\nu_N\Rightarrow\nu$, we can construct using the Skorokhod 
representation theorem a sequence of random variables
$X^N\to X$ a.s.\ such that $X^N\sim\nu_n$, $X\sim\nu$.
Let $f$ be bounded and continuous, and note that
$\nu_NK_Nf=\mathbf{E}(K_Nf(X^N))$ and
$\nu Kf=\mathbf{E}(Kf(X))$.  But by our assumption $K_Nf(X^N)\to Kf(X)$
a.s., so the claim follows immediately using dominated convergence.
\end{proof}

\subsection{Tightness of random measures}

As many of the stochastic processes in this paper are measure-valued,
we require a simple condition for tightness of a family of measure-valued 
random variables.  The following necessary and sufficient condition is 
quoted from \cite[corollary 2.2]{Jak88}.  As usual, if $\varrho$ is a 
$\mathcal{P}(E)$-valued random variable, we denote by 
$\rho=\mathbf{E}\varrho\in\mathcal{P}(E)$ the probability measure defined 
by $\rho(A)=\mathbf{E}(\varrho(A))$ for all $A\in\mathcal{B}(E)$.
Note that this is the \emph{barycenter} of $\mathrm{Law}(\varrho)\in
\mathcal{P}(\mathcal{P}(E))$.

\begin{lem}
\label{lem:bary}
Let $\{\varrho_i:i\in I\}$ be a family of $\mathcal{P}(E)$-valued random 
variables on $(\Omega,\mathcal{F},\mathbf{P})$.  Then this family is tight 
if and only if the family of probability measures 
$\{\mathbf{E}\varrho_i:i\in I\}\subset\mathcal{P}(E)$ is tight.
\end{lem}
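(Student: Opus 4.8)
The plan is to prove both directions directly from Prokhorov's theorem — applied on the Polish space $E$ for one implication and on the (again Polish) space $\mathcal{P}(E)$ for the other — together with an elementary Markov-inequality argument; the filtering structure plays no role. Throughout, recall that ``$\{\varrho_i:i\in I\}$ is tight'' means precisely that $\{\mathrm{Law}(\varrho_i):i\in I\}$ is a tight family of probability measures on $\mathcal{P}(E)$.

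For the forward implication, suppose $\{\varrho_i\}$ is tight and fix $\epsilon>0$. By Prokhorov there is a compact $\mathcal{K}\subset\mathcal{P}(E)$ with $\mathbf{P}(\varrho_i\notin\mathcal{K})\le\epsilon$ for all $i$; being compact in $\mathcal{P}(E)$, the set $\mathcal{K}$ is itself tight, so there is a compact $K\subset E$ with $\sup_{\nu\in\mathcal{K}}\nu(K^c)\le\epsilon$. I would then simply estimate, for each $i$,
$$
	\rho_i(K^c)=\mathbf{E}\big[\varrho_i(K^c)\,\mathbf{1}_{\{\varrho_i\in\mathcal{K}\}}\big]
	+\mathbf{E}\big[\varrho_i(K^c)\,\mathbf{1}_{\{\varrho_i\notin\mathcal{K}\}}\big]
	\le\epsilon+\mathbf{P}(\varrho_i\notin\mathcal{K})\le 2\epsilon,
$$
using $\varrho_i(K^c)\le 1$ in the second term; thus $\sup_i\rho_i(K^c)\le2\epsilon$, and since $\epsilon>0$ was arbitrary, $\{\mathbf{E}\varrho_i\}$ is tight.

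The converse is the real content. Assume $\{\rho_i=\mathbf{E}\varrho_i\}$ is tight. The key step is to exhibit the right compact subsets of $\mathcal{P}(E)$: for increasing compacts $K_1\subset K_2\subset\cdots$ in $E$ and a sequence $\eta_m\downarrow 0$, the set $\mathcal{K}=\{\nu\in\mathcal{P}(E):\nu(K_m^c)\le\eta_m\text{ for all }m\ge1\}$ is compact. Indeed it is closed, since $\nu\mapsto\nu(K_m^c)$ is lower semicontinuous ($K_m^c$ being open), so each defining constraint is a closed condition; and it is tight, since given $\delta>0$ one picks $m$ with $\eta_m\le\delta$ and then $\nu(K_m^c)\le\delta$ for all $\nu\in\mathcal{K}$ — so Prokhorov makes $\mathcal{K}$ relatively compact, hence compact. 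Now fix $\epsilon>0$. Using tightness of $\{\rho_i\}$, I would choose for each $m$ a compact $K_m\subset E$ — enlarging it to $K_1\cup\cdots\cup K_m$ to ensure monotonicity, which only decreases $\rho_i(K_m^c)$ — with $\sup_i\rho_i(K_m^c)\le\epsilon\,2^{-m}\eta_m$, taking say $\eta_m=1/m$. Markov's inequality then gives, uniformly in $i$,
$$
	\mathbf{P}(\varrho_i\notin\mathcal{K})\le\sum_{m\ge1}\mathbf{P}\big(\varrho_i(K_m^c)>\eta_m\big)
	\le\sum_{m\ge1}\frac{\mathbf{E}[\varrho_i(K_m^c)]}{\eta_m}
	=\sum_{m\ge1}\frac{\rho_i(K_m^c)}{\eta_m}\le\epsilon,
$$
which is exactly tightness of $\{\varrho_i\}$.

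The step I expect to require the most care is this converse direction: verifying that the candidate set $\mathcal{K}$ is genuinely compact in $\mathcal{P}(E)$ (closedness plus a Prokhorov argument inside $\mathcal{P}(E)$), and tuning the tolerances $\eta_m$ so that the Markov-bound series is summable against the decay rates that tightness of the barycenters supplies. Measurability of the events $\{\varrho_i\notin\mathcal{K}\}$ and of $\omega\mapsto\varrho_i(\omega)(K_m^c)$ is immediate since each $\varrho_i$ is a random probability measure. This argument is the specialization to the present setting of \cite[corollary 2.2]{Jak88}.
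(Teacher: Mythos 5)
Your proof is correct. Note that the paper does not actually prove this lemma: it is quoted verbatim from Jakubowski \cite[corollary 2.2]{Jak88}, so there is no in-text argument to compare against; what you have written is a complete, self-contained proof of the cited fact. Both directions check out. The forward direction is the easy application of Prokhorov on $\mathcal{P}(E)$ (a compact set of measures is itself a tight family) followed by the splitting of $\mathbf{E}[\varrho_i(K^c)]$ over $\{\varrho_i\in\mathcal{K}\}$ and its complement. The converse is where the content lies, and your construction is the standard one: the set $\mathcal{K}=\{\nu:\nu(K_m^c)\le\eta_m\ \forall m\}$ is closed because $\nu\mapsto\nu(K_m^c)$ is lower semicontinuous for open $K_m^c$, tight by construction, hence compact by Prokhorov; and the Markov bound $\mathbf{P}(\varrho_i(K_m^c)>\eta_m)\le\rho_i(K_m^c)/\eta_m$ with the calibration $\sup_i\rho_i(K_m^c)\le\epsilon\,2^{-m}\eta_m$ makes the union bound sum to $\epsilon$ uniformly in $i$. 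The only cosmetic remark is that neither the monotonicity of the $K_m$ nor the specific choice $\eta_m=1/m$ is actually needed — any $\eta_m>0$ works since the $2^{-m}\eta_m$ calibration absorbs it, and $\mathcal{K}$ is closed and tight without nestedness — but including them does no harm. Measurability of $\omega\mapsto\varrho_i(\omega)(K_m^c)$ is indeed automatic from the identification of random probability measures with kernels (cf.\ the paper's appeal to \cite[lemma 1.40]{Kal02}). In short: a valid proof supplying what the paper delegates to a reference.
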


\subsection{Tightness in product spaces}

The following elementary lemma will be used repeatedly.

\begin{lem}
\label{lem:marg}
Let $\{\Xi_i:i\in I\}$ be a family of probability measures on 
$E\times\tilde E$, where $E,\tilde E$ are Polish.  Then this family is 
tight iff its marginals $\{\Xi_i(\cdot\times\tilde E):i\in I\}$ and 
$\{\Xi_i(E\times\cdot):i\in I\}$ are tight.
\end{lem}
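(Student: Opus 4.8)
The plan is to argue directly from the definition of tightness of a family of probability measures, rather than passing through Prokhorov's theorem; the statement is elementary once one keeps careful track of the compact sets involved and insists on their being uniform in $i$.

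First I would prove the ``only if'' direction. Suppose $\{\Xi_i:i\in I\}$ is tight, and fix $\varepsilon>0$. Choose a compact set $K\subseteq E\times\tilde E$ with $\Xi_i(K)\ge 1-\varepsilon$ for every $i\in I$. The coordinate projections $p:E\times\tilde E\to E$ and $\tilde p:E\times\tilde E\to\tilde E$ are continuous, so $p(K)\subseteq E$ and $\tilde p(K)\subseteq\tilde E$ are compact; moreover $K\subseteq p(K)\times\tilde E$ and $K\subseteq E\times\tilde p(K)$. Hence $\Xi_i(p(K)\times\tilde E)\ge\Xi_i(K)\ge 1-\varepsilon$ and $\Xi_i(E\times\tilde p(K))\ge 1-\varepsilon$ for all $i$. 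Since $\varepsilon$ was arbitrary, the marginal families $\{\Xi_i(\cdot\times\tilde E):i\in I\}$ and $\{\Xi_i(E\times\cdot):i\in I\}$ are tight.

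For the ``if'' direction, suppose both marginal families are tight and fix $\varepsilon>0$. Pick a compact $K_1\subseteq E$ with $\Xi_i(K_1\times\tilde E)\ge 1-\varepsilon/2$ for all $i$, and a compact $K_2\subseteq\tilde E$ with $\Xi_i(E\times K_2)\ge 1-\varepsilon/2$ for all $i$. Then $K_1\times K_2$ is compact in $E\times\tilde E$, and a union bound gives $\Xi_i((K_1\times K_2)^c)\le\Xi_i(K_1^c\times\tilde E)+\Xi_i(E\times K_2^c)\le\varepsilon$, so $\Xi_i(K_1\times K_2)\ge 1-\varepsilon$ uniformly in $i$. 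As $\varepsilon$ was arbitrary, $\{\Xi_i:i\in I\}$ is tight.

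I do not expect any genuine obstacle here; the argument is entirely soft. The only points that warrant (minimal) care are that the sets $A\times\tilde E$ and $E\times A$ are indeed Borel in $E\times\tilde E$, which holds because Polish spaces are separable metric and hence $\mathcal{B}(E\times\tilde E)=\mathcal{B}(E)\otimes\mathcal{B}(\tilde E)$, so that the marginals are well-defined Borel probability measures, and that throughout one uses the uniform-in-$i$ form of the compact exhausting set, as required by the definition of tightness of a family.
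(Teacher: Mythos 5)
Your proof is correct and is precisely the standard elementary argument that the paper itself omits, deferring instead to \cite[lemma 1.4.3]{VW96}: continuity of the coordinate projections gives tightness of the marginals, and the product of the two compact marginal sets together with a union bound gives the converse. Nothing further is needed.
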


The proof is straightforward and follows along the lines of 
\cite[lemma 1.4.3]{VW96}.

\section{Proofs}
\label{app:proofs}

This appendix contains the proofs that were omitted from the main text.

\subsection{Proof of Proposition \ref{prop:mfe}}
\label{app:prop:mfe}

Note that $\pi_{k-1}$ is a function of $Y_0,\ldots,Y_{k-1}$ only.  
Therefore 
$$
        \mathbf{E}(f(X_k,\pi_{k})|X_0,\ldots,X_k,Y_0,\ldots,Y_{k-1}) =
        \int f(X_k,\mathsf{U}(y,\pi_{k-1}P))\,\Upsilon(X_k,y)\,
	\varphi(dy),
$$
where we have used the hidden Markov property and $\pi_k = 
\mathsf{U}(Y_k,\pi_{k-1}P)$.  Using the Markov property of $(X_k)_{k\ge 
0}$ and the tower property of the conditional expectation, we obtain
$$
        \mathbf{E}(f(X_k,\pi_{k})|X_0,\ldots,X_{k-1},Y_0,\ldots,Y_{k-1}) =
        \int f(x',\mathsf{U}(y,\pi_{k-1}P))\,\Upsilon(x',y)\,
	\varphi(dy)\,P(X_{k-1},dx').
$$
As $\sigma\{X_0,\ldots,X_{k-1},\pi_0,\ldots,\pi_{k-1})\subset
\sigma\{X_0,\ldots,X_{k-1},Y_0,\ldots,Y_{k-1}\}$, the expression for
$\mathsf{\Pi}$ follows immediately.  The expression for the initial 
measure $M$ follows along similar lines.

\emph{Ergodic property:}  We begin by proving existence of the invariant 
measure.  Consider a copy $(\tilde X_k,\tilde Y_k)_{k\ge 0}$ of the hidden 
Markov model started at the stationary distribution $\tilde X_0\sim\lambda$.
Using stationarity, the process can be extended to negative times
$(\tilde X_k,\tilde Y_k)_{k\in\mathbb{Z}}$ also.  Now consider the 
measure-valued process $(\tilde X_k,\mathbf{P}(\tilde 
X_k\in\cdot|\tilde Y_k,\tilde Y_{k-1},\ldots))$ (the regular conditional 
probability always exists in a Polish state space).  It is easily seen 
that this is a stationary Markov process with transition kernel 
$\mathsf{\Pi}$.  Thus the law of $(\tilde X_0,\mathbf{P}(\tilde 
X_0\in\cdot|\tilde Y_0,\tilde Y_{-1},\ldots))$ is an invariant measure for 
$\mathsf{\Pi}$.

It remains to establish uniqueness of the invariant measure.
Endow the Polish space $E\times\mathcal{P}(E)$ with the Polish metric 
$D((x,\nu),(x',\nu'))= d(x,x')+\|\nu-\nu'\|_{\rm BL}$, where $d$ is a 
Polish metric on $E$.  In lemma \ref{lem:harris} below, it is shown that 
assumption \ref{aspt:harris} implies that
$$
        \left|\int F(z,\alpha)\,\mathsf{\Pi}^j(x,\nu,dz,d\alpha) -
        \int F(z,\alpha)\,\mathsf{\Pi}^j(x,\nu',dz,d\alpha)\right|
        \xrightarrow{j\to\infty}0
$$
whenever $F$ is $D$-Lipschitz.  Let $\Lambda$ and $\Lambda'$ be two 
$\mathsf{\Pi}$-invariant measures.   Then the marginals of $\Lambda$ and 
$\Lambda'$ on the signal state space are invariant measures for $P$.
But assumption \ref{aspt:harris} implies that $\lambda$ is the unique 
invariant measure for the signal, so we must have
$\Lambda(A\times\mathcal{P}(E))=\Lambda'(A\times\mathcal{P}(E))=\lambda(A)$.
By the Polish assumption, we therefore have the disintegrations
$$
        \Lambda(A\times B) = 
        \int I_A(x)\,I_B(\nu)\,\Lambda_{x}(d\nu)\,\lambda(dx),\qquad
        \quad
        \Lambda'(A\times B) = 
        \int I_A(x)\,I_B(\nu)\,\Lambda_{x}'(d\nu)\,\lambda(dx).
$$
It follows that
\begin{multline*}
        \left|\int F\,d\Lambda -
        \int F\,d\Lambda'\right|
        = 
        \left|\int \mathsf{\Pi}^jF(x,\nu)\,\Lambda(dx,d\nu)-
        \int \mathsf{\Pi}^jF(x,\nu)\,\Lambda'(dx,d\nu)\right| \\
        \le 
        \int |\mathsf{\Pi}^jF(x,\nu)-\mathsf{\Pi}^jF(x,\nu')|
        \,\Lambda_x(d\nu)\,\Lambda_x'(d\nu')\,\lambda(dx)
        \xrightarrow{j\to\infty}0
\end{multline*}
whenever $F$ is uniformly bounded and $D$-Lipschitz.  But this class of 
functions is measure determining, so $\Lambda$ and $\Lambda'$ must 
coincide.  The proof is complete.
\qed

\begin{lem}
\label{lem:harris}
Let $D((x,\nu),(x',\nu'))= d(x,x')+\|\nu-\nu'\|_{\rm BL}$, where $d$ 
is a Polish metric on $E$.  Then
$$
        \left|\int F(z,\alpha)\,\mathsf{\Pi}^j(x,\nu,dz,d\alpha) -
        \int F(z,\alpha)\,\mathsf{\Pi}^j(x,\nu',dz,d\alpha)\right|
        \xrightarrow{j\to\infty}0
$$
whenever $F$ is $D$-Lipschitz, provided assumptions \ref{aspt:nondeg} and 
\ref{aspt:harris} hold.
\end{lem}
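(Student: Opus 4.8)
The plan is to unfold the $j$-step transition kernel $\mathsf{\Pi}^j$ in terms of the filtering recursion and then to appeal to the qualitative filter stability result of \cite{Van08}. Concretely, I would first observe that, by the explicit form of the transition kernel $\mathsf{\Pi}$ given in Proposition \ref{prop:mfe}, iterating it $j$ times yields the identity $\int F(z,\alpha)\,\mathsf{\Pi}^j(x,\nu,dz,d\alpha)=\mathbf{E}_x[F(X_j,F_j\cdots F_1\nu)]$, where $\mathbf{E}_x$ denotes expectation under a hidden Markov model in which the signal $(X_k)_{k\ge0}$ is a Markov chain with transition kernel $P$ and $X_0=x$, the observations $(Y_k)_{k\ge1}$ are conditionally independent given the signal with $Y_k$ having density $\Upsilon(X_k,\cdot)$ with respect to $\varphi$, and $F_k\rho=\mathsf{U}(Y_k,\rho P)$ is the one step filtering map driven by $Y_k$. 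This is verified by a routine induction on $j$ from the form of $\mathsf{\Pi}$; the essential point is that the law of $(X_1,\ldots,X_j,Y_1,\ldots,Y_j)$ under $\mathbf{E}_x$ depends only on $x$, while the dependence on the filter initialization enters solely through the term $F_j\cdots F_1\nu$. Writing the same identity with $\nu'$ in place of $\nu$, subtracting, and using that $F$ is $D$-Lipschitz with some constant $L$ — so that $|F(z,\alpha)-F(z,\alpha')|\le L\,\|\alpha-\alpha'\|_{\rm BL}$ — I obtain that the quantity in the statement is bounded by $L\,\mathbf{E}_x\|F_j\cdots F_1\nu-F_j\cdots F_1\nu'\|_{\rm BL}$. (It suffices to treat bounded $F$, as in the use of this lemma in Proposition \ref{prop:mfe}; in any event only the difference of the two integrands enters, and this is bounded by $2L$.)

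It then remains to show that $\|F_j\cdots F_1\nu-F_j\cdots F_1\nu'\|_{\rm BL}\to0$ as $j\to\infty$, $\mathbf{P}_x$-almost surely; since $\|\cdot\|_{\rm BL}\le2$, dominated convergence then completes the proof. This almost sure convergence is exactly the qualitative stability of the nonlinear filter in the weak topology established in \cite{Van08}: under the nondegeneracy assumption \ref{aspt:nondeg} and the ergodicity assumption \ref{aspt:harris} on the signal, the filter asymptotically forgets its initial condition, i.e., $\|F_j\cdots F_1\nu-F_j\cdots F_1\nu'\|_{\rm BL}\to0$ a.s.\ for an arbitrary pair of initial measures $\nu,\nu'$.

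The step requiring the most care is ensuring that the stability result applies in the form needed here: it must hold (i) for two \emph{arbitrary} initializations $\nu,\nu'$ rather than only when one of them is the law generating the observations — a distinction that matters because the signal here is started at the point mass $\delta_x$, so a comparison against $\delta_x$ is unavailable in total variation and even in the weak topology one wants the genuinely initialization-free statement — and (ii) $\mathbf{P}_x$-a.s., that is, for almost every observation sequence produced by the signal started at $\delta_x$. Both are supplied by the weak-topology stability theory of \cite{Van08}, which, unlike total variation stability, needs no absolute continuity between the initializations and is insensitive to the initial law of the signal (Harris recurrence making the observation laws asymptotically independent of it). Granting this, the remaining ingredients — the inductive unfolding of $\mathsf{\Pi}^j$, the Lipschitz bound, and dominated convergence — are entirely routine.
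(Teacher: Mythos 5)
Your proof is correct and follows essentially the same route as the paper's: unfold $\mathsf{\Pi}^j(x,\nu,\cdot)$ as the law of the pair (signal, wrongly initialized filter) driven by a common observation path, use the $D$-Lipschitz property to reduce the difference to $L\,\mathbf{E}\|F_j\cdots F_1\nu-F_j\cdots F_1\nu'\|_{\rm BL}$, and invoke the filter stability result of \cite{Van08} for two arbitrary initializations. The only cosmetic difference is that the paper bounds $\|\cdot\|_{\rm BL}\le\|\cdot\|_{\rm TV}$ and cites the expected total variation stability of \cite[corollary 5.5]{Van08} directly, whereas you use almost sure weak stability plus dominated convergence; both are available under assumptions \ref{aspt:nondeg} and \ref{aspt:harris}, and your explicit attention to the arbitrariness of $\nu,\nu'$ is exactly the right point to worry about.
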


\begin{proof}
Consider a copy $(\hat X_k,\hat Y_k)_{k\ge 0}$ of the hidden Markov model 
started at the initial measure $\hat X_0\sim P(x,\cdot)$, and define 
recursively $\hat\pi_k=\mathsf{U}(\hat Y_k,\hat\pi_{k-1}P)$ and 
$\hat\pi_k'=\mathsf{U}(\hat Y_k,\hat\pi_{k-1}'P)$, $k\ge 1$ with 
$\hat\pi_0=\mathsf{U}(\hat Y_0,\nu P)$ and $\hat\pi_0'= \mathsf{U}(\hat 
Y_0,\nu'P)$.  Then for $j\ge 1$, the measure 
$\mathsf{\Pi}^j(x,\nu,dz,d\alpha)$ coincides with the law of $(\hat 
X_{j-1},\hat\pi_{j-1})$, and similarly $\mathsf{\Pi}^j(x,\nu',dz,d\alpha)$ 
coincides with the law of $(\hat X_{j-1},\hat\pi_{j-1}')$.  Thus
\begin{multline*}
        \left|\int F(z,\alpha)\,\mathsf{\Pi}^j(x,\nu,dz,d\alpha) -
        \int F(z,\alpha)\,\mathsf{\Pi}^j(x,\nu',dz,d\alpha)\right|
	= |\mathbf{E}(F(\hat X_{j-1},\hat\pi_{j-1})-
	F(\hat X_{j-1},\hat\pi_{j-1}'))|
	\\
	\le
	\|F\|_L\,\mathbf{E}(\|\hat\pi_{j-1}-\hat\pi_{j-1}'\|_{\rm BL})
	\le
	\|F\|_L\,\mathbf{E}(\|\hat\pi_{j-1}-\hat\pi_{j-1}'\|_{\rm TV}).
\end{multline*}
But assumptions \ref{aspt:nondeg} and \ref{aspt:harris} allow us to apply 
the filter stability result \cite[corollary 5.5]{Van08}, which implies 
that $\mathbf{E}(\|\hat\pi_{j-1}-\hat\pi_{j-1}'\|_{\rm TV})\to 0$ as 
$j\to\infty$.  This completes the proof.
\end{proof}

\subsection{Proof of Theorem \ref{thm:approx}}
\label{app:thm:approx}

The proof of theorem \ref{thm:approx} proceeds in several steps.  
\emph{Throughout this section (appendix \ref{app:thm:approx}) we always 
presume that the assumptions of theorem \ref{thm:approx} are in force.}

We begin by proving that the convergence holds on every finite time horizon.

\begin{lem}
\label{lem:finhorz0}
$\mathbf{E}(\{\pi_{k}^{N}(f)-\pi_{k}(f)\}^2)\xrightarrow{N\to\infty}0$
for any $k<\infty$ and bounded continuous $f:E\to\mathbb{R}$.
\end{lem}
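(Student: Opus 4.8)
The plan is to reduce the statement to the finite-horizon weak convergence of the \emph{marginal} law of $(X_k,\pi_k^N)$ --- which follows immediately from the one step convergence assumption --- by exploiting the fact that $\pi_k$ is a conditional expectation. The point is that this sidesteps entirely the need to control the joint behaviour of $\pi_k^N$ and $\pi_k$, which at first sight seems unavoidable.

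First I would record a conditioning identity. Fix $k<\infty$ and a bounded continuous $f:E\to\mathbb{R}$. Since $\mathcal{G}$ is independent of $(X_n,Y_n)_{n\ge0}$, it is in particular independent of $\sigma(X_k)\vee\mathcal{F}_k^Y$, so conditioning on $\mathcal{F}_k^Y\vee\mathcal{G}$ agrees with conditioning on $\mathcal{F}_k^Y$ for functions of $X_k$; hence $\pi_k(f)=\mathbf{E}(f(X_k)\mid\mathcal{F}_k^Y)=\mathbf{E}(f(X_k)\mid\mathcal{F}_k^Y\vee\mathcal{G})$. As $\pi_k^N(f)$ is bounded and $\mathcal{F}_k^Y\vee\mathcal{G}$-measurable and $f(X_k)-\pi_k(f)$ has zero conditional mean given $\mathcal{F}_k^Y\vee\mathcal{G}$, the cross term in the expansion of $(\pi_k^N(f)-f(X_k))^2$ around $\pi_k(f)$ vanishes, and we obtain
$$
\mathbf{E}[(\pi_k^N(f)-\pi_k(f))^2]=\mathbf{E}[(\pi_k^N(f)-f(X_k))^2]-\mathbf{E}[(\pi_k(f)-f(X_k))^2].
$$
It therefore suffices to show that $\mathbf{E}[(\pi_k^N(f)-f(X_k))^2]\to\mathbf{E}[(\pi_k(f)-f(X_k))^2]$, which is a statement about the law of $(X_k,\pi_k^N)$ alone.

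Next I would obtain this convergence from the one step convergence assumption. Applying Lemma \ref{lem:iter} on the Polish space $E\times\mathcal{P}(E)$ (in place of $E$) to the kernels $\mathsf{\Pi}_N$ and $\mathsf{\Pi}$ --- the first condition of Theorem \ref{thm:approx} is precisely the hypothesis of that lemma --- we get that $\gamma_N\Rightarrow\gamma$ implies $\gamma_N\mathsf{\Pi}_N^{\,j}\Rightarrow\gamma\mathsf{\Pi}^{\,j}$ for every $j\ge1$. Taking $j=k$ and $\gamma_N=M_N\Rightarrow M=\gamma$ (also part of the first condition of Theorem \ref{thm:approx}), and using $\mathrm{Law}(X_k,\pi_k^N)=M_N\mathsf{\Pi}_N^k$ (Assumption \ref{aspt:approx}) together with $\mathrm{Law}(X_k,\pi_k)=M\mathsf{\Pi}^k$ (Proposition \ref{prop:mfe}), this gives $(X_k,\pi_k^N)\Rightarrow(X_k,\pi_k)$ as $N\to\infty$. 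Since the map $(x,\nu)\mapsto(f(x)-\nu(f))^2$ is bounded and continuous on $E\times\mathcal{P}(E)$ whenever $f$ is bounded continuous, it follows that $\mathbf{E}[(\pi_k^N(f)-f(X_k))^2]\to\mathbf{E}[(\pi_k(f)-f(X_k))^2]$, and combining this with the identity above yields $\mathbf{E}[(\pi_k^N(f)-\pi_k(f))^2]\to0$, as claimed.

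The only genuine idea here is the conditioning identity in the first step: it trades the apparently necessary control of the joint behaviour of $\pi_k^N$ and $\pi_k$ for a quantity involving only the marginal of $(X_k,\pi_k^N)$, which is exactly what the one step assumption plus Lemma \ref{lem:iter} delivers. Everything after that is routine weak-convergence bookkeeping; the one thing to be careful about is identifying the kernels and initial measures correctly when invoking Lemma \ref{lem:iter}.
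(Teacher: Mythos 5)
Your proposal is correct and follows essentially the same route as the paper: the same orthogonality/Pythagorean identity from $\pi_k(f)=\mathbf{E}(f(X_k)\mid\mathcal{F}_k^Y\vee\mathcal{G})$, followed by Lemma \ref{lem:iter} and the one step convergence assumption applied to a bounded continuous functional of $(X_k,\pi_k^N)$ (the paper uses $F(x,\nu)=\nu(f)^2-2f(x)\nu(f)$, which differs from your $(f(x)-\nu(f))^2$ only by the additive term $f(x)^2$ whose expectation is the same under both laws).
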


\begin{proof}
As $\mathcal{G}$ is independent of $(X_k,Y_k)_{k\ge 0}$, we can 
write $\pi_k(f)=\mathbf{E}(f(X_k)|\mathcal{F}_k^Y\vee\mathcal{G})$.  
Therefore
$$
	\mathbf{E}(\{\pi_{k}^{N}(f)-\pi_{k}(f)\}^2) =
	\mathbf{E}(\pi_{k}^{N}(f)^2 - 2\,f(X_k)\,\pi_{k}^{N}(f)) +
	\mathbf{E}(\pi_k(f)^2),
$$
where we have used assumption \ref{aspt:approx} and the tower property of 
the conditional expectation.  Define the bounded continuous function
$F:E\times\mathcal{P}(E)\to\mathbb{R}$ as $F(x,\nu) =
\nu(f)^2-2\,f(x)\,\nu(f)$.  By lemma \ref{lem:iter} and the first 
condition of theorem \ref{thm:approx}, we have $M_N\mathsf{\Pi}_N^kF\to 
M\mathsf\Pi^kF$ as $N\to\infty$.  Therefore
$$
	\mathbf{E}(\pi_{k}^{N}(f)^2 - 2\,f(X_k)\,\pi_{k}^{N}(f)) 
	\xrightarrow{N\to\infty}
	\mathbf{E}(\pi_{k}(f)^2 - 2\,f(X_k)\,\pi_{k}(f)) =
	-\mathbf{E}(\pi_{k}(f)^2).
$$
Substituting in the above expression completes the proof.
\end{proof}

We now strengthen this lemma to prove $\|\cdot\|_{\rm BL}$-convergence.

\begin{lem}
\label{lem:finhorz}
$\mathbf{E}(\|\pi_{k}^{N}-\pi_{k}\|_{\rm BL})\xrightarrow{N\to\infty}0$
for any $k<\infty$.
\end{lem}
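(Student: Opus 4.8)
The plan is to upgrade the pointwise $L^2$ convergence of Lemma \ref{lem:finhorz0} to weak convergence of the random measures $\pi_k^N$ towards $\pi_k$, and then to exploit the fact that $\|\cdot\|_{\rm BL}$ metrizes the weak convergence topology together with the crude bound $\|\mu-\nu\|_{\rm BL}\le\|\mu-\nu\|_{\rm TV}\le 2$. The essential observation is that $\pi_k^N$ and $\pi_k$ are defined on the \emph{same} probability space, so Lemma \ref{lem:finhorz0} literally asserts that $\pi_k^N(f)\to\pi_k(f)$ in $L^2(\mathbf{P})$, hence in probability, for each fixed bounded continuous $f:E\to\mathbb{R}$.

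First I would fix, once and for all, a countable family $\{f_m\}_{m\ge 1}$ of bounded continuous functions on $E$ that is convergence determining for weak convergence on the Polish space $E$ (such a family exists by standard facts on weak convergence of probability measures on Polish spaces; see appendix \ref{app:random} and, e.g., \cite{Kal02}). Given an arbitrary subsequence of $\mathbb{N}$, Lemma \ref{lem:finhorz0} and Markov's inequality show that $\pi_k^N(f_m)\to\pi_k(f_m)$ in probability along that subsequence for every $m$; a diagonal extraction then produces a further subsequence $(N_j)$ along which $\pi_k^{N_j}(f_m)\to\pi_k(f_m)$ $\mathbf{P}$-a.s.\ \emph{simultaneously} for all $m\ge 1$. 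On the almost sure event where this holds, the convergence-determining property of $\{f_m\}$ yields $\pi_k^{N_j}\Rightarrow\pi_k$, and since $\|\cdot\|_{\rm BL}$ metrizes the weak topology on $\mathcal{P}(E)$ this is precisely $\|\pi_k^{N_j}-\pi_k\|_{\rm BL}\to 0$. Thus $\|\pi_k^{N_j}-\pi_k\|_{\rm BL}\to 0$ $\mathbf{P}$-a.s.

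To conclude I would apply the bounded convergence theorem: since $0\le\|\pi_k^{N_j}-\pi_k\|_{\rm BL}\le 2$, the almost sure convergence just obtained gives $\mathbf{E}(\|\pi_k^{N_j}-\pi_k\|_{\rm BL})\to 0$ along $(N_j)$. As the initial subsequence was arbitrary, every subsequence of $(\mathbf{E}(\|\pi_k^{N}-\pi_k\|_{\rm BL}))_{N\ge 1}$ admits a further subsequence converging to $0$, and therefore the whole sequence converges to $0$, which is the assertion of the lemma.

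The only point requiring any care is the measure-theoretic bookkeeping behind the countable convergence-determining family and the passage from almost sure convergence of $\pi_k^{N_j}(f_m)$ for all $m$ to almost sure weak convergence $\pi_k^{N_j}\Rightarrow\pi_k$; this relies on the Polish assumption on $E$ (hence on the metrizability and separability of $\mathcal{P}(E)$ in the weak topology), but it is entirely standard and presents no real obstacle, so I would expect the bulk of the argument to be the routine subsequence manipulation described above rather than any substantive difficulty.
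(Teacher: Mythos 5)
Your proof is correct, but it takes a genuinely different route from the paper's. You upgrade the $L^2$ convergence of Lemma \ref{lem:finhorz0} to almost sure weak convergence along subsequences via a countable convergence-determining family, and conclude by bounded convergence and the subsequence principle. The paper instead argues quantitatively: it fixes a compact set $\tilde K$ carrying most of the mass of $\mathrm{Law}(X_k)$, uses the Arzel\`a--Ascoli theorem to replace the supremum over $\mathrm{Lip}(E)$ by a maximum over finitely many $f_1,\dots,f_m$ up to an $\varepsilon$ error on a neighbourhood $K$ of $\tilde K$, controls the mass that $\pi_k^N$ places on $K^c$ by applying the Portmanteau theorem to the weak convergence $M_N\mathsf{\Pi}_N^k\Rightarrow M\mathsf{\Pi}^k$ (Lemma \ref{lem:iter} and the first condition of Theorem \ref{thm:approx}), and handles the finitely many terms $|\pi_k^N(f_\ell)-\pi_k(f_\ell)|$ by Lemma \ref{lem:finhorz0}. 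Your argument is arguably more elementary --- no Arzel\`a--Ascoli, no Portmanteau or tightness step --- but the paper's template is chosen because it transfers almost verbatim to the time-averaged statement of Lemma \ref{lem:infhorz}, where the quantity of interest is a Ces\`aro average over $k\le T_N$ and your subsequence/a.s.-convergence device does not apply directly. Two small points you should make explicit: you need a genuinely \emph{convergence-determining} (not merely separating) countable family of bounded continuous functions, which does exist on any Polish space (e.g.\ bounded Lipschitz functions built from a countable dense subset, closed under finite maxima); and the measurability of $\|\pi_k^N-\pi_k\|_{\rm BL}$, needed to invoke the bounded convergence theorem, is exactly the issue settled in the remark following the lemma.
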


\begin{rem}
The quantity $\mathbf{E}(\|\pi_{k}^{N}-\pi_{k}\|_{\rm BL})$ is well 
defined, as $\|\pi_{k}^{N}-\pi_{k}\|_{\rm BL}$ is measurable by
\cite[corollary A.2]{Van09}.  We will therefore employ such expressions in 
the following without further comment.
\end{rem}

\begin{proof}
Fix $\varepsilon>0$.  As $X_k$ takes values in the Polish space $E$, there 
exists a compact subset $\tilde K\subset E$ such that $\mathbf{P}(X_k\in 
\tilde K)>1-\varepsilon$.  Moreover, by the Arzel{\`a}-Ascoli theorem, 
there is an $m<\infty$ and $f_1,\ldots,f_m\in\mathrm{Lip}(E)$ such that
$\min_{\ell}\sup_{x\in\tilde K}|f(x)-f_\ell(x)|<\varepsilon$ whenever 
$f\in\mathrm{Lip}(E)$.  Define the open set $K=\{x\in 
E:d(x,K)<\varepsilon\}$.  Then $\min_{\ell}\sup_{x\in 
K}|f(x)-f_\ell(x)|<3\varepsilon$ for any
$f\in\mathrm{Lip}(E)$, so
\begin{equation*}
\begin{split}
	& \mathbf{E}(\|\pi_{k}^{N}-\pi_{k}\|_{\rm BL}) \le
	\mathbf{E}\Bigg(\sup_{f\in\mathrm{Lip}(E)}
		|\pi_{k}^{N}(fI_K)-\pi_{k}(fI_K)|\Bigg)
	+ 
	\mathbf{E}\Bigg(\sup_{f\in\mathrm{Lip}(E)}
		|\pi_{k}^{N}(fI_{K^c})-\pi_{k}(fI_{K^c})|\Bigg) \\
	&\qquad\le
	6\varepsilon +
	\mathbf{E}\Big(\max_{\ell}
		|\pi_{k}^{N}(f_\ell I_K)-\pi_{k}(f_\ell I_K)|\Big)
	+ \mathbf{E}(\pi_{k}^{N}(K^c)) + \mathbf{P}(X_k\in K^c) \\
	&\qquad\le
	6\varepsilon +
	\mathbf{E}\Big(\max_{\ell}
		|\pi_{k}^{N}(f_\ell)-\pi_{k}(f_\ell)|\Big)
	+ 2\,\mathbf{E}(\pi_{k}^{N}(K^c)) + 2\,\mathbf{P}(X_k\in K^c) \\
	&\qquad\le
	6\varepsilon +
	\sum_{\ell=1}^m
	\sqrt{\mathbf{E}(\{\pi_{k}^{N}(f_\ell)-\pi_{k}(f_\ell)\}^2)}
	+ 2\,\mathbf{E}(\pi_{k}^{N}(K^c)) + 2\,\mathbf{P}(X_k\in K^c).
\end{split}
\end{equation*}
As $K^c$ is closed and $M_N\mathsf\Pi_N^k\Rightarrow M\mathsf\Pi^k$
by lemma \ref{lem:iter} and the first condition of theorem 
\ref{thm:approx}, applying the Portmanteau theorem to the second term
and lemma \ref{lem:finhorz0} to the first term gives
$$
	\limsup_{N\to\infty}\mathbf{E}(\|\pi_{k}^{N}-\pi_{k}\|_{\rm BL}) 
	\le
	6\varepsilon + 4\,\mathbf{P}(X_k\in K^c) \le
	6\varepsilon + 4\,\mathbf{P}(X_k\in \tilde K^c) \le
	10\varepsilon.
$$
But $\varepsilon>0$ was arbitrary, so the proof is complete.
\end{proof}

We have now established convergence of the filters as $N\to\infty$ for a 
fixed time $k$.  The idea is now to repeat the proofs for the case where 
we let the number of particles and time go to infinity simultaneously. We 
will repeat almost identically the steps used in the last two lemmas, 
where the finite time weak convergence $M_N\mathsf\Pi_N^k\Rightarrow 
M\mathsf\Pi^k$ used in the proofs is replaced by the following ergodic 
lemma (recall that $\Lambda$ is the unique invariant measure of 
$\mathsf\Pi$).

\begin{lem}
\label{lem:erg}
For any sequence $T_N\nearrow\infty$ as $N\to\infty$, define
the probability measures
$$
	\int F(x,\nu)\,\Lambda_N(dx,d\nu) :=
	\mathbf{E}\left[
	\frac{1}{T_N}\sum_{k=1}^{T_N} F(X_k,\pi_k^N)
	\right],
$$
for every $N\in\mathbb{N}$.  Then $\Lambda_N\Rightarrow\Lambda$ as 
$N\to\infty$.
\end{lem}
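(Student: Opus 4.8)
The plan is a Krylov--Bogolyubov type argument: establish tightness of the family $\{\Lambda_N\}$, show that every weak subsequential limit is $\mathsf{\Pi}$-invariant, and then appeal to uniqueness of the invariant measure (Proposition \ref{prop:mfe}) to conclude that the full sequence converges to $\Lambda$. Tightness of $\{\Lambda_N:N\ge1\}$ on the Polish space $E\times\mathcal{P}(E)$ reduces, by Lemma \ref{lem:marg}, to tightness of the two marginals. The $E$-marginal of $\Lambda_N$ is the average $\frac{1}{T_N}\sum_{k=1}^{T_N}\mu P^k$; since $\mu P^k\to\lambda$ in total variation by Assumption \ref{aspt:harris}, the set $\{\mu P^k:k\ge1\}$ is relatively compact and hence tight, and tightness is preserved under convex combinations, so the $E$-marginals are tight. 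The $\mathcal{P}(E)$-marginal of $\Lambda_N$ is the law of a $\mathcal{P}(E)$-valued random variable whose barycenter equals precisely $\Xi_N(A)=\mathbf{E}[\frac{1}{T_N}\sum_{k=1}^{T_N}\pi_k^N(A)]$; thus the second condition of Theorem \ref{thm:approx}, combined with Lemma \ref{lem:bary}, yields tightness of the $\mathcal{P}(E)$-marginals. Hence $\{\Lambda_N\}$ is tight and, by Prokhorov's theorem, admits weak limit points along subsequences.

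Fix a bounded continuous $F:E\times\mathcal{P}(E)\to\mathbb{R}$. Since $(X_k,\pi_k^N)_{k\ge0}$ is Markov with transition kernel $\mathsf{\Pi}_N$, conditioning on $(X_k,\pi_k^N)$ and telescoping the sum gives
$$
\int\mathsf{\Pi}_NF\,d\Lambda_N=\mathbf{E}\!\left[\frac{1}{T_N}\sum_{k=1}^{T_N}F(X_{k+1},\pi_{k+1}^N)\right]=\int F\,d\Lambda_N+\frac{1}{T_N}\,\mathbf{E}\big[F(X_{T_N+1},\pi_{T_N+1}^N)-F(X_1,\pi_1^N)\big],
$$
so that $\big|\int\mathsf{\Pi}_NF\,d\Lambda_N-\int F\,d\Lambda_N\big|\le2\|F\|_\infty/T_N\to0$. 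Now let $\Lambda_\infty$ be the weak limit of $\Lambda_{N_j}$ along a subsequence. The first condition of Theorem \ref{thm:approx} states exactly that $\mathsf{\Pi}_NF(x_N,\nu_N)\to\mathsf{\Pi}F(x,\nu)$ whenever $x_N\to x$ and $\nu_N\Rightarrow\nu$, while the functions $\mathsf{\Pi}_NF$ are uniformly bounded by $\|F\|_\infty$. Realizing $\Lambda_{N_j}\Rightarrow\Lambda_\infty$ by almost surely convergent random variables via the Skorokhod representation theorem and applying the bounded convergence theorem therefore yields $\int\mathsf{\Pi}_{N_j}F\,d\Lambda_{N_j}\to\int\mathsf{\Pi}F\,d\Lambda_\infty$. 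Combining this with the telescoping estimate and with $\int F\,d\Lambda_{N_j}\to\int F\,d\Lambda_\infty$, we obtain $\int\mathsf{\Pi}F\,d\Lambda_\infty=\int F\,d\Lambda_\infty$ for every bounded continuous $F$; since such functions are measure-determining on $E\times\mathcal{P}(E)$, the measure $\Lambda_\infty$ is $\mathsf{\Pi}$-invariant, and hence $\Lambda_\infty=\Lambda$ by Proposition \ref{prop:mfe}. As every weak subsequential limit of the tight family $\{\Lambda_N\}$ coincides with $\Lambda$, we conclude that $\Lambda_N\Rightarrow\Lambda$.

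The main obstacle is the passage to the limit $\int\mathsf{\Pi}_{N_j}F\,d\Lambda_{N_j}\to\int\mathsf{\Pi}F\,d\Lambda_\infty$: one must combine the weak convergence $\Lambda_{N_j}\Rightarrow\Lambda_\infty$ with the fact that the integrands $\mathsf{\Pi}_NF$ themselves vary with $N$, converging to $\mathsf{\Pi}F$ only in the continuous (not uniform) sense provided by the first condition of Theorem \ref{thm:approx}. The Skorokhod coupling is the clean device for this, and it is worth noting that the argument never requires $\mathsf{\Pi}$ itself to be Feller. A further point that must be tracked explicitly is that $\Lambda_N$ is only asymptotically invariant for the $N$-dependent kernel $\mathsf{\Pi}_N$ rather than exactly invariant for $\mathsf{\Pi}$, which is why the telescoping error term---vanishing precisely because $T_N\to\infty$---cannot be omitted.
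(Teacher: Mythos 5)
Your proof is correct and follows essentially the same route as the paper: tightness of the marginals via Lemmas \ref{lem:marg}, \ref{lem:bary} and Assumption \ref{aspt:harris}, the telescoping identity showing asymptotic invariance, and uniqueness from Proposition \ref{prop:mfe}. The only cosmetic difference is that where you carry out the Skorokhod coupling by hand to pass to the limit in $\int\mathsf{\Pi}_{N_j}F\,d\Lambda_{N_j}$, the paper simply invokes Lemma \ref{lem:iter}, whose proof is exactly that coupling argument.
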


\begin{proof}
We first show that the family $\{\Lambda_N:N\in\mathbb{N}\}$ is
tight.  It suffices to show that the marginals are tight by lemma 
\ref{lem:marg}.  But the first marginal of $\Lambda_N$ is 
$T_N^{-1}\sum_{k=1}^{T_N}\mu P^k$, which converges to the signal invariant 
measure $\lambda$ by assumption \ref{aspt:harris}.  This establishes 
tightness of the first marginal.  By lemma \ref{lem:bary}, tightness of 
the second marginal follows from the second condition of theorem 
\ref{thm:approx}.

Having established tightness, it remains to show that every convergent 
subsequence of $\{\Lambda_N:N\in\mathbb{N}\}$ converges to $\Lambda$.  In 
fact, it suffices to show that the limit of every convergent subsequence 
must be an invariant measure of $\mathsf\Pi$, as the latter is unique by 
proposition \ref{prop:mfe}.

Let $\Lambda_{Q(N)}$ be a weakly convergent subsequence of 
$\{\Lambda_N:N\in\mathbb{N}\}$ and denote its limit as $\tilde\Lambda$.
By the first condition of theorem \ref{thm:approx} and lemma 
\ref{lem:iter}, we have $\Lambda_{Q(N)}\mathsf\Pi_{Q(N)}\Rightarrow
\tilde\Lambda\mathsf\Pi$.  But note that
$$
	\Lambda_{Q(N)}\mathsf\Pi_{Q(N)}
	= 
	\frac{1}{T_{Q(N)}}\sum_{k=1}^{T_{Q(N)}} 
	M_{Q(N)}\mathsf\Pi_{Q(N)}^{k+1}
	=
	\Lambda_{Q(N)}
	+ \frac{1}{T_{Q(N)}} \{M_{Q(N)}\mathsf\Pi_{Q(N)}^{T_{Q(N)}+1}
	- M_{Q(N)}\mathsf\Pi_{Q(N)}\}.
$$
We therefore have
$$
	\|\tilde\Lambda-\tilde\Lambda\mathsf\Pi\|_{\rm BL} =
	\lim_{N\to\infty}
	\|\Lambda_{Q(N)}-\Lambda_{Q(N)}\mathsf\Pi_{Q(N)}\|_{\rm BL}
	\le
	\limsup_{N\to\infty}\,\frac{2}{T_{Q(N)}}=0,
$$
so $\tilde\Lambda$ is an invariant measure for $\mathsf\Pi$.  
\end{proof}

We now repeat the arguments of lemmas \ref{lem:finhorz0} and 
\ref{lem:finhorz} with the necessary modifications.

\begin{lem}
\label{lem:infhorz0}
For any sequence $T_N\nearrow\infty$ as $N\to\infty$,
$$
	\mathbf{E}\left[
	\frac{1}{T_N}\sum_{k=1}^{T_N}\{\pi_{k}^{N}(f)-\pi_{k}(f)\}^2
	\right]
	\xrightarrow{N\to\infty}0
$$
for any bounded continuous function $f:E\to\mathbb{R}$.
\end{lem}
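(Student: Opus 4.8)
The plan is to mimic the proof of Lemma \ref{lem:finhorz0} almost verbatim, replacing the finite-time weak convergence $M_N\mathsf\Pi_N^k\Rightarrow M\mathsf\Pi^k$ by the ergodic convergence $\Lambda_N\Rightarrow\Lambda$ established in Lemma \ref{lem:erg}. First I would exploit the key structural fact, coming from assumption \ref{aspt:approx} and the independence of $\mathcal G$ from $(X_k,Y_k)_{k\ge 0}$: since $\mathcal G\vee\mathcal F_k^Y\supseteq\mathcal F_k^Y$ and $\mathcal G\perp(X_k,Y_k)$, we have $\pi_k(f)=\mathbf E(f(X_k)\mid\mathcal F_k^Y\vee\mathcal G)$, and hence by the tower property $\mathbf E(f(X_k)\pi_k(f))=\mathbf E(\pi_k(f)^2)$. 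Expanding the square,
\begin{equation*}
	\mathbf{E}\!\left[\tfrac1{T_N}\sum_{k=1}^{T_N}\{\pi_k^N(f)-\pi_k(f)\}^2\right]
	= \mathbf{E}\!\left[\tfrac1{T_N}\sum_{k=1}^{T_N}\{\pi_k^N(f)^2-2f(X_k)\pi_k^N(f)\}\right]
	+ \mathbf{E}\!\left[\tfrac1{T_N}\sum_{k=1}^{T_N}\pi_k(f)^2\right].
\end{equation*}

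Next I would introduce the bounded continuous function $F:E\times\mathcal P(E)\to\mathbb R$, $F(x,\nu)=\nu(f)^2-2f(x)\nu(f)$, which is bounded and continuous because $f$ is bounded continuous and $\nu\mapsto\nu(f)$ is weakly continuous. With this notation the first term on the right is exactly $\int F\,d\Lambda_N$, so by Lemma \ref{lem:erg} it converges to $\int F\,d\Lambda$ as $N\to\infty$. For the second term, define the bounded continuous $G(x,\nu)=\nu(f)^2$; since the law of $(X_k,\pi_k)_{k\ge0}$ under $\mathsf\Pi$ with initial law $M$ is (by Proposition \ref{prop:mfe}) the same process whose Cesàro averages define $\Lambda$ — indeed $\Lambda_N$ with the choice $\pi_k^N=\pi_k$ would be $T_N^{-1}\sum_{k=1}^{T_N}M\mathsf\Pi^k$, which also converges to $\Lambda$ by the same argument as in Lemma \ref{lem:erg} — one is tempted to directly write the limit as $\int G\,d\Lambda$. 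More carefully, I would note that $\mathbf E(f(X_k)\pi_k(f))=\mathbf E(\pi_k(f)^2)$ lets me rewrite $\mathbf E[\tfrac1{T_N}\sum\pi_k(f)^2]=\mathbf E[\tfrac1{T_N}\sum\{2f(X_k)\pi_k(f)-\pi_k(f)^2\}]=-\int F'\,d\Lambda_N'$ where $\Lambda_N'$ is the version of $\Lambda_N$ built from $\pi_k$ rather than $\pi_k^N$; and $\Lambda_N'\Rightarrow\Lambda$ by the argument of Lemma \ref{lem:erg} applied with $\mathsf\Pi_N$ replaced by $\mathsf\Pi$ (which trivially satisfies the one-step convergence and whose time-averaged marginals are tight by assumption \ref{aspt:harris} and Lemma \ref{lem:bary} applied to the filter — the latter being tight as it is a conditional expectation, cf.\ the tightness remark after Theorem \ref{thm:approx}). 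Hence the second term converges to $-\int F\,d\Lambda$.

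Adding the two limits, the right-hand side converges to $\int F\,d\Lambda-\int F\,d\Lambda=0$, which is precisely the claim. The main obstacle I anticipate is the bookkeeping around the second term: one must be careful that the quantities $\pi_k$ appearing there are genuinely governed by the invariant measure $\Lambda$ in the Cesàro limit, and in particular that the relation $\mathbf E(f(X_k)\pi_k(f))=\mathbf E(\pi_k(f)^2)$ survives the time-averaging (it does, termwise) so that the second term can be folded into an $\int(\cdot)\,d\Lambda_N'$ expression and handled by the same ergodic lemma. Once the identification of both terms with integrals against weakly convergent sequences of measures is in place, the conclusion is immediate; no quantitative estimates or rates are needed, consistent with the qualitative nature of the whole argument.
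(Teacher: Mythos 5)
Your proof is correct, but it handles the crucial second term $\mathbf{E}[T_N^{-1}\sum_k\pi_k(f)^2]$ by a genuinely different route than the paper. The paper identifies $\Lambda(F)$ explicitly via the stationary two-sided copy $(\tilde X_k,\tilde Y_k)_{k\in\mathbb{Z}}$ and then proves $\limsup_k\mathbf{E}(\pi_k(f)^2)\le\mathbf{E}(\mathbf{E}(f(\tilde X_0)|\tilde Y_0,\tilde Y_{-1},\ldots)^2)$ directly, using Jensen's inequality, the Markov property, and the exchange-of-$\sigma$-fields identity $\bigcap_k\sigma\{\tilde Y_\ell:\ell\le 0\}\vee\sigma\{\tilde X_\ell:\ell\le -k\}=\sigma\{\tilde Y_\ell:\ell\le 0\}$ from \cite{Van08}. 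You instead use the tower-property identity $\mathbf{E}(f(X_k)\pi_k(f))=\mathbf{E}(\pi_k(f)^2)$ to fold the second term into $-\Lambda_N'(F)$, where $\Lambda_N'=T_N^{-1}\sum_{k=1}^{T_N}M\mathsf{\Pi}^k$ is the Ces\`aro average of the \emph{exact} filter chain, and then run the argument of lemma \ref{lem:erg} a second time so that the two limits cancel as $\Lambda(F)-\Lambda(F)=0$; this avoids any explicit computation with the stationary representation and the tail $\sigma$-field result, at the price of routing all the filter-stability input through the uniqueness of $\Lambda$ (proposition \ref{prop:mfe}). Two points deserve to be made explicit rather than dismissed as trivial: (i) tightness of the second marginal of $\Lambda_N'$ is indeed free, since $\mathbf{E}\pi_k=\mu P^k$ by the tower property, so both marginals coincide with $T_N^{-1}\sum_k\mu P^k$ which converges to $\lambda$; and (ii) the invariance of any limit point of $\{\Lambda_N'\}$ requires $\nu_N\mathsf{\Pi}\Rightarrow\nu\mathsf{\Pi}$ whenever $\nu_N\Rightarrow\nu$, i.e., the Feller property of $\mathsf{\Pi}$ itself, which is not a stated hypothesis but does follow from the first condition of theorem \ref{thm:approx} by a diagonal extraction (continuous convergence of $\mathsf{\Pi}_N$ to $\mathsf{\Pi}$ forces $\mathsf{\Pi}$ to be Feller), or can be checked directly under assumption \ref{aspt:feller} as in the proof of proposition \ref{prop:bootconv}. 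With those two verifications supplied, your argument is complete and arguably cleaner than the paper's.
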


\begin{proof}
As in the proof of lemma \ref{lem:finhorz0}, we can write
$$
	\mathbf{E}\left[
	\frac{1}{T_N}\sum_{k=1}^{T_N}\{\pi_{k}^{N}(f)-\pi_{k}(f)\}^2
	\right]
	=
	\mathbf{E}\left[
	\frac{1}{T_N}\sum_{k=1}^{T_N}F(X_k,\pi_k^N)
	\right] +
	\mathbf{E}\left[
	\frac{1}{T_N}\sum_{k=1}^{T_N} \pi_k(f)^2
	\right],
$$
where $F(x,\nu) = \nu(f)^2-2\,f(x)\,\nu(f)$.  By lemma \ref{lem:erg}
$$
	\mathbf{E}\left[
	\frac{1}{T_N}\sum_{k=1}^{T_N}F(X_k,\pi_k^N)
	\right] \xrightarrow{N\to\infty}
	-\,\mathbf{E}(\mathbf{E}(f(\tilde X_0)|\tilde Y_0,
		\tilde Y_{-1},\ldots)^2),
$$
where we have used the expression for $\Lambda$ in terms of the 
stationary copy $(\tilde X_k,\tilde Y_k)_{k\in\mathbb{Z}}$ given in 
the proof of proposition \ref{prop:mfe}.  The proof would evidently be 
complete if we can show that
$$
	\limsup_{k\to\infty}\mathbf{E}(\pi_k(f)^2) \le
	\mathbf{E}(\mathbf{E}(f(\tilde X_0)|\tilde Y_0,
		\tilde Y_{-1},\ldots)^2).
$$
To this end, we proceed as follows.  First, note that
$$
	\mathbf{E}(\pi_{k+\ell}(f)^2) =
	\mathbf{E}(\mathbf{E}(f(X_{k+\ell})|Y_0,\ldots,Y_{k+\ell})^2)
	\le
	\mathbf{E}(\mathbf{E}(f(X_{k+\ell})|X_0,\ldots,X_{\ell},
		Y_0,\ldots,Y_{k+\ell})^2),
$$
where we have used the tower property of the conditional expectation and 
Jensen's inequality.  But by the Markov property of $(X_k,Y_k)_{k\ge 0}$,
we can write
$$
	\mathbf{E}(f(X_{k+\ell})|X_0,\ldots,X_{\ell},
		Y_0,\ldots,Y_{k+\ell}) =
	\mathbf{E}(f(X_{k+\ell})|X_{\ell},
		Y_\ell,\ldots,Y_{k+\ell}) :=
	G_k(X_\ell,Y_\ell,\ldots,Y_{k+\ell}),
$$
where the function $G_k$ does not depend on $\ell$.
Using assumption \ref{aspt:harris}, it follows easily that
$$
	\limsup_{\ell\to\infty}\mathbf{E}(\pi_{\ell}(f)^2)=
	\limsup_{\ell\to\infty}\mathbf{E}(\pi_{k+\ell}(f)^2) 
	\le
	\mathbf{E}(G_k(\tilde X_{-k},\tilde Y_{-k},\ldots,\tilde Y_0)^2).
$$
But $G_k(\tilde X_{-k},\tilde Y_{-k},\ldots,\tilde Y_0) =
\mathbf{E}(f(\tilde X_{0})|\tilde Y_0,\ldots,\tilde Y_{-k},\tilde X_{-k})$,
so by the Markov property of $(\tilde X_k,\tilde Y_k)_{k\ge 0}$
$$
	\limsup_{\ell\to\infty}\mathbf{E}(\pi_{\ell}(f)^2) \le
	\mathbf{E}(G_k(\tilde X_{-k},\tilde Y_{-k},\ldots,\tilde Y_0)^2) =
	\mathbf{E}(\mathbf{E}(f(\tilde X_{0})|
	\sigma\{\tilde Y_\ell:\ell\le 0\}
	\vee\sigma\{\tilde X_\ell:\ell\le -k\})^2)
$$
for all $k$.  Letting $k\to\infty$ in this expression and using that
$$
	\bigcap_{k\ge 0}\sigma\{\tilde Y_\ell:\ell\le 0\}
        \vee\sigma\{\tilde X_\ell:\ell\le -k\} =
	\sigma\{\tilde Y_\ell:\ell\le 0\}\quad
	\mathbf{P}\mbox{-a.s.}
$$
by \cite[theorem 4.2]{Van08} (which holds by virtue of
assumptions \ref{aspt:nondeg} and \ref{aspt:harris}), the proof is complete.
\end{proof}

\begin{lem}
\label{lem:infhorz}
For any sequence $T_N\nearrow\infty$ as $N\to\infty$,
$$
	\mathbf{E}\left[
	\frac{1}{T_N}\sum_{k=1}^{T_N}\|\pi_{k}^{N}-\pi_{k}\|_{\rm BL}
	\right]
	\xrightarrow{N\to\infty}0.
$$
\end{lem}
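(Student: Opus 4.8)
The plan is to mimic the proof of Lemma \ref{lem:finhorz} almost verbatim, with three substitutions: the expectation $\mathbf{E}(\,\cdot\,)$ is replaced everywhere by the time average $\mathbf{E}[T_N^{-1}\sum_{k=1}^{T_N}(\,\cdot\,)]$; the finite-horizon mean-square convergence of Lemma \ref{lem:finhorz0} is replaced by Lemma \ref{lem:infhorz0}; and — this is the one genuinely new ingredient — the compact set $\tilde K$ (which in Lemma \ref{lem:finhorz} was allowed to depend on the fixed time $k$) is replaced by a single compact set that controls the signal law, the exact filter, and the particle-filter tails \emph{uniformly over all times $k$ and all $N$}. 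Producing this uniform compact set is precisely where the second (tightness) hypothesis of theorem \ref{thm:approx} is used, and I expect it to be the only delicate point; once it is in hand the rest is a routine repetition of the finite-horizon estimate.

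In detail, fix $\varepsilon>0$. Since $\mu P^k\to\lambda$ in total variation by assumption \ref{aspt:harris}, the family $\{\mu P^k:k\ge 0\}\cup\{\lambda\}$ is relatively weakly compact and hence tight, while by the second condition of theorem \ref{thm:approx} the family $\{\Xi_N:N\ge 1\}$ is tight (this last fact can alternatively be read off from Lemma \ref{lem:erg} together with Lemma \ref{lem:bary}). Hence there is a single compact $K_0\subset E$ with $\sup_{k\ge 0}\mu P^k(K_0^c)\le\varepsilon$ and $\sup_N\Xi_N(K_0^c)\le\varepsilon$. Exactly as in Lemma \ref{lem:finhorz}, an Arzel\`a--Ascoli argument yields $f_1,\dots,f_m\in\mathrm{Lip}(E)$ with $\min_\ell\sup_{x\in K_0}|f(x)-f_\ell(x)|<\varepsilon$ for every $f\in\mathrm{Lip}(E)$, and setting $K=\{x\in E:d(x,K_0)<\varepsilon\}$ one gets $\min_\ell\sup_{x\in K}|f(x)-f_\ell(x)|<3\varepsilon$ for every $f\in\mathrm{Lip}(E)$ (here $K_0\subset K$).

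Next I would split $\|\pi_k^N-\pi_k\|_{\rm BL}$ into its restrictions to $K$ and $K^c$ precisely as in Lemma \ref{lem:finhorz}: on $K^c$ the bound $|f|\le 1$ contributes $\pi_k^N(K^c)+\pi_k(K^c)$, while on $K$ one replaces $f$ by the nearest $f_\ell$ at a cost of $6\varepsilon$, plus a further $\pi_k^N(K^c)+\pi_k(K^c)$ when passing from $f_\ell I_K$ to $f_\ell$. Taking the time-averaged expectation, bounding $\max_\ell$ by $\sum_\ell$ and applying Jensen's inequality, one arrives at
$$
\mathbf{E}\left[\frac{1}{T_N}\sum_{k=1}^{T_N}\|\pi_k^N-\pi_k\|_{\rm BL}\right]
\le 6\varepsilon
+\sum_{\ell=1}^m\sqrt{\mathbf{E}\left[\frac{1}{T_N}\sum_{k=1}^{T_N}\{\pi_k^N(f_\ell)-\pi_k(f_\ell)\}^2\right]}
+2\,\mathbf{E}\left[\frac{1}{T_N}\sum_{k=1}^{T_N}\pi_k^N(K^c)\right]
+2\,\mathbf{E}\left[\frac{1}{T_N}\sum_{k=1}^{T_N}\pi_k(K^c)\right].
$$
Now $\mathbf{E}[T_N^{-1}\sum_{k=1}^{T_N}\pi_k^N(K^c)]\le\Xi_N(K_0^c)\le\varepsilon$ since $K_0\subset K$; using $\mathbf{E}(\pi_k(K^c))=\mathbf{P}(X_k\in K^c)=\mu P^k(K^c)\le\mu P^k(K_0^c)$ gives $\mathbf{E}[T_N^{-1}\sum_{k=1}^{T_N}\pi_k(K^c)]\le\varepsilon$; and each summand under the square root tends to $0$ as $N\to\infty$ by Lemma \ref{lem:infhorz0}. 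Letting $N\to\infty$ yields $\limsup_N\mathbf{E}[T_N^{-1}\sum_{k=1}^{T_N}\|\pi_k^N-\pi_k\|_{\rm BL}]\le 10\varepsilon$, and since $\varepsilon>0$ was arbitrary the lemma follows. (Measurability of $\|\pi_k^N-\pi_k\|_{\rm BL}$, needed for these expectations, was already noted after Lemma \ref{lem:finhorz}.) As indicated above, the only step requiring care beyond Lemma \ref{lem:finhorz} is the uniform choice of $K_0$, which is impossible in general without the tightness hypothesis: in the finite-horizon lemma the time index is frozen, so plain weak convergence of $M_N\mathsf\Pi_N^k$ suffices, whereas the time average forces control of the tails of $\pi_k^N$ uniformly in $k\le T_N$.
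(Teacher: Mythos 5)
Your proof is correct and follows essentially the same route as the paper: the same Arzel\`a--Ascoli decomposition as in Lemma \ref{lem:finhorz}, with Lemma \ref{lem:infhorz0} controlling the main term and Jensen's inequality applied to the time average. The only (harmless) difference is in the tail terms: the paper picks $\tilde K$ with $\lambda(\tilde K)>1-\varepsilon$ and bounds $\mathbf{E}[T_N^{-1}\sum_k\pi_k^N(K^c)]$ asymptotically via Lemma \ref{lem:erg} and the Portmanteau theorem, whereas you pick a single compact set controlling $\{\mu P^k\}$ and $\{\Xi_N\}$ uniformly and invoke the tightness hypothesis directly --- both give the same $10\varepsilon$ bound.
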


\begin{proof}
Fix $\varepsilon>0$, and choose a compact subset $\tilde K\subset E$ such 
that $\lambda(\tilde K)>1-\varepsilon$.  Construct 
$f_1,\ldots,f_m\in\mathrm{Lip}(E)$ and $K$ as in the proof of lemma 
\ref{lem:finhorz}.  Then we can estimate
\begin{multline*}
	\mathbf{E}\left[
	\frac{1}{T_N}\sum_{k=1}^{T_N}\|\pi_{k}^{N}-\pi_{k}\|_{\rm BL}
	\right]
	\le
	6\varepsilon + 
	\sum_{\ell=1}^m\mathbf{E}\left[
	\frac{1}{T_N}\sum_{k=1}^{T_N}
	\{\pi_{k}^{N}(f_\ell)-\pi_{k}(f_\ell)\}^2
	\right]^{1/2} \\
	+
	2\,\mathbf{E}\left[\frac{1}{T_N}\sum_{k=1}^{T_N}
	\pi_{k}^{N}(K^c)\right]
	+
	\frac{1}{T_N}\sum_{k=1}^{T_N} 2\,\mathbf{P}(X_k\in K^c).
\end{multline*}
Applying lemma \ref{lem:infhorz0} to the first term,
lemma \ref{lem:erg} and the Portmanteau theorem to the second term, and
assumption \ref{aspt:harris} to the third term, we find that
$$
	\limsup_{N\to\infty}
	\mathbf{E}\left[
	\frac{1}{T_N}\sum_{k=1}^{T_N}\|\pi_{k}^{N}-\pi_{k}\|_{\rm BL}
	\right] 
	\le
	6\varepsilon + 4\,\lambda(K^c) \le
	6\varepsilon + 4\,\lambda(\tilde K^c) \le
	10\varepsilon.
$$
But $\varepsilon>0$ was arbitrary, so the proof is complete.
\end{proof}

We can now complete the proof of theorem \ref{thm:approx}.

\begin{proof}[Proof of Theorem \ref{thm:approx}]
Suppose that
$$
        \limsup_{N\to\infty}\,\sup_{T\ge 0}\,
        \mathbf{E}\left[
        \frac{1}{T}\sum_{k=1}^{T}\|\pi^{N}_{k}-\pi_{k}\|_{\rm BL}
        \right]=\varepsilon>0.
$$
Then we can find subsequences $Q(N)\nearrow\infty$ and $T_{Q(N)}$ such 
that
$$
        \mathbf{E}\left[
        \frac{1}{T_{Q(N)}}\sum_{k=1}^{T_{Q(N)}}
	\|\pi^{Q(N)}_{k}-\pi_{k}\|_{\rm BL}
        \right]>\frac{\varepsilon}{2}
	\quad\mbox{for all }N.
$$
Suppose first that $T_{Q(N)}\le T_{\rm max}$ is a bounded sequence.  Then 
lemma \ref{lem:finhorz} gives
$$
        \mathbf{E}\left[
        \frac{1}{T_{Q(N)}}\sum_{k=1}^{T_{Q(N)}}
	\|\pi^{Q(N)}_{k}-\pi_{k}\|_{\rm BL}
        \right] \le
	\max_{k\le T_{\rm max}}
	\mathbf{E}(\|\pi^{Q(N)}_{k}-\pi_{k}\|_{\rm BL})
	\xrightarrow{N\to\infty}0,
$$
so we have a contradiction.  But if $T_{Q(N)}$ is an unbounded sequence,
we can find a further subsequence $R(N)\nearrow\infty$ such that
$T_{R(N)}\nearrow\infty$, and by lemma \ref{lem:infhorz}
$$
        \mathbf{E}\left[
        \frac{1}{T_{R(N)}}\sum_{k=1}^{T_{R(N)}}
	\|\pi^{R(N)}_{k}-\pi_{k}\|_{\rm BL}
        \right] \xrightarrow{N\to\infty}0
$$
which is again a contradiction.  The proof is complete.
\end{proof}

\subsection{Proof of Corollary \ref{cor:esterr}}
\label{app:cor:esterr}
Note that we can estimate
\begin{multline*}
	\mathbf{E}\left[\left|
        \frac{1}{T}\sum_{k=1}^{T}
	\left(f(X_k)-\int f\,d\pi_k^N\right)^2 -
        \frac{1}{T}\sum_{k=1}^{T}
	\left(f(X_k) -
	\int f\,d\pi_k\right)^2\right|
	\right]
	\\
	\le 
	4\,\|f\|_\infty\,
	\mathbf{E}\left[
        \frac{1}{T}\sum_{k=1}^{T}
	|\pi_k^N(f)-\pi_k(f)|
	\right]
	\le 
	4\,\|f\|_\infty\,
	\mathbf{E}\left[
        \frac{1}{T}\sum_{k=1}^{T}
	\{\pi_k^N(f)-\pi_k(f)\}^2
	\right]^{1/2}.
\end{multline*}
The result is now easily obtained by following the 
same steps as in the proof of theorem 
\ref{thm:approx}.
\qed

\subsection{Proof of Proposition \ref{prop:bootconv}}
\label{app:prop:bootconv}

We begin by proving a general continuity result for $\mathsf{R}_N$.

\begin{lem}
\label{lem:resample}
$\mathsf{R}_N(\nu_N,\cdot)\Rightarrow\delta_\nu$ as $N\to\infty$
whenever $\nu_N\Rightarrow\nu$ as $N\to\infty$.
\end{lem}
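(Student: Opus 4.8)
The plan is to reduce the statement to a standard fact about empirical measures of i.i.d.\ samples. First I would recall that convergence in $\mathcal{P}(\mathcal{P}(E))$ to a Dirac mass $\delta_\nu$ is equivalent to the following: for every bounded continuous $\Phi:\mathcal{P}(E)\to\mathbb{R}$, we have $\int \Phi\,d\mathsf{R}_N(\nu_N,\cdot)\to\Phi(\nu)$. Since $\mathcal{P}(E)$ is Polish and the limit is a point mass, it in fact suffices to check this for $\Phi$ in a convergence-determining class, or equivalently to show that the random measure $\varrho_N=\frac1N\sum_{i=1}^N\delta_{X^i_N}$, where $X^1_N,\dots,X^N_N$ are i.i.d.\ with law $\nu_N$, converges weakly to $\nu$ in probability (hence in distribution, which is what $\mathsf{R}_N(\nu_N,\cdot)\Rightarrow\delta_\nu$ asserts). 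Because $\|\cdot\|_{\rm BL}$ metrizes weak convergence on $\mathcal{P}(E)$, it is enough to prove $\mathbf{E}(\|\varrho_N-\nu\|_{\rm BL})\to 0$, and by the triangle inequality $\|\varrho_N-\nu\|_{\rm BL}\le\|\varrho_N-\nu_N\|_{\rm BL}+\|\nu_N-\nu\|_{\rm BL}$, the second term vanishing by hypothesis. So the whole lemma comes down to showing $\mathbf{E}(\|\varrho_N-\nu_N\|_{\rm BL})\to 0$, i.e.\ that the empirical measure of $N$ i.i.d.\ draws approaches their common law uniformly over the (equi-Lipschitz, equibounded) test functions, with a bound that is uniform over all choices of the sampling law.

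The key estimate is therefore a uniform Glivenko--Cantelli / law of large numbers bound in the bounded-Lipschitz norm. I would establish $\mathbf{E}(\|\varrho-\rho\|_{\rm BL})\le g(N)$ with $g(N)\to 0$ and $g$ not depending on $\rho\in\mathcal{P}(E)$, where $\varrho$ is the empirical measure of $N$ i.i.d.\ samples from $\rho$. This can be done by covering: fix $\varepsilon>0$; since weak convergence is metrized by $\|\cdot\|_{\rm BL}$ and the relevant quantity is a supremum over $\mathrm{Lip}(E)$, one approximates $f\in\mathrm{Lip}(E)$ on a compact set by finitely many functions. Concretely, by tightness of the single measure $\rho$ one picks a compact $K$ with $\rho(K^c)<\varepsilon$; on $K$ the family $\mathrm{Lip}(E)$ is totally bounded in sup-norm by Arzel\`a--Ascoli, so there are $f_1,\dots,f_m$ (with $m=m(\varepsilon,K)$) approximating every $f\in\mathrm{Lip}(E)$ to within $\varepsilon$ on $K$; then $\|\varrho-\rho\|_{\rm BL}\le 3\varepsilon+\max_\ell|\varrho(f_\ell)-\rho(f_\ell)|+\varrho(K^c)+\rho(K^c)$, and $\mathbf{E}|\varrho(f_\ell)-\rho(f_\ell)|\le N^{-1/2}$ by the $L^2$ law of large numbers (variance bounded by $1/N$ since $|f_\ell|\le 1$), while $\mathbf{E}\varrho(K^c)=\rho(K^c)<\varepsilon$. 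Thus $\limsup_N\mathbf{E}\|\varrho-\rho\|_{\rm BL}\le 5\varepsilon$; since $\varepsilon$ is arbitrary, the bound goes to $0$. The subtlety is that $K$, $m$, and the $f_\ell$ depend on $\rho$, so this gives pointwise (in $\rho$) convergence, not a uniform rate.

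To handle the genuinely moving law $\nu_N$, I would not try to make the above bound uniform in $\rho$; instead I would exploit that $\nu_N\Rightarrow\nu$, so the sequence $\{\nu_N\}$ together with $\nu$ is a \emph{tight} family. Then a single compact set $K$ works for all $\nu_N$ simultaneously (with $\sup_N\nu_N(K^c)<\varepsilon$, using tightness), a single finite collection $f_1,\dots,f_m$ works on $K$, and the estimate $\mathbf{E}\|\varrho_N-\nu_N\|_{\rm BL}\le 3\varepsilon+\sum_{\ell=1}^m\mathbf{E}|\varrho_N(f_\ell)-\nu_N(f_\ell)|+\mathbf{E}\varrho_N(K^c)+\nu_N(K^c)$ holds with the middle sum bounded by $m/\sqrt N\to 0$ and the last two terms each $<\varepsilon$ (for the empirical one, because $\mathbf{E}\varrho_N(K^c)=\nu_N(K^c)$, and $K^c$ should be taken open or one uses $\overline{K}$ appropriately — a routine adjustment). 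Hence $\limsup_N\mathbf{E}\|\varrho_N-\nu_N\|_{\rm BL}\le 5\varepsilon$ for every $\varepsilon>0$, which gives $\mathbf{E}\|\varrho_N-\nu_N\|_{\rm BL}\to 0$ and completes the proof. The main obstacle is precisely this point: getting the covering/compactness step to be uniform along the sequence $\nu_N$, which is where tightness of $\{\nu_N\}\cup\{\nu\}$ (a consequence of $\nu_N\Rightarrow\nu$) is essential; everything else is a standard $L^2$ law-of-large-numbers estimate.
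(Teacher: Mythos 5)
Your proposal is correct, but it takes a genuinely different route from the paper. The paper's proof never touches the bounded-Lipschitz norm of the empirical measure: it observes that the barycenter of $\mathsf{R}_N(\rho,\cdot)$ is $\rho$, invokes Lemma \ref{lem:bary} (Jakubowski's criterion) together with tightness of $\{\nu_N\}$ to conclude that the laws $\mathsf{R}_N(\nu_N,\cdot)$ are tight in $\mathcal{P}(\mathcal{P}(E))$, and then identifies every subsequential limit $R$ as $\delta_\nu$ by integrating the single bounded continuous functional $\nu'\mapsto|\nu'(f)-\nu(f)|$ against $R$ and using the elementary variance bound $\int|\rho'(f)-\rho(f)|\,\mathsf{R}_N(\rho,d\rho')\le\|f\|_\infty/\sqrt N$ for one test function at a time. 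You instead prove the stronger quantitative statement $\mathbf{E}\|\varrho_N-\nu_N\|_{\rm BL}\to 0$ directly, handling the supremum over $\mathrm{Lip}(E)$ by the tightness-plus-Arzel\`a--Ascoli covering argument (the same device the paper deploys later, in the proofs of Lemmas \ref{lem:finhorz} and \ref{lem:infhorz}, but for a different purpose). Your version is more self-contained --- it avoids the Jakubowski criterion entirely and yields convergence in probability of $\varrho_N$ to $\nu$ without passing to subsequences --- at the cost of a longer covering step; the paper's version is shorter because Lemma \ref{lem:bary} is already available and only a one-function estimate is then needed. Two minor remarks: your caveat about taking $K^c$ open is unnecessary in your setup, since $\mathbf{E}\varrho_N(K^c)=\nu_N(K^c)$ holds exactly (no Portmanteau argument is needed), and you should note, as the paper does elsewhere, that measurability of $\|\varrho_N-\nu_N\|_{\rm BL}$ as an uncountable supremum requires a word of justification.
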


\begin{proof}
It follows immediately from the definition that the barycenter of 
$\mathsf{R}_N(\rho,\cdot)$ is $\rho$ for any $\rho\in\mathcal{P}(E)$.  
Therefore, by lemma \ref{lem:bary}, the sequence 
$\{\mathsf{R}_N(\nu_N,\cdot):N\in\mathbb{N}\}$ is tight.  It thus suffices 
to prove that every convergent subsequence converges to $\delta_\nu$.
Let $Q(N)$ be any subsequence such that
$\mathsf{R}_{Q(N)}(\nu_{Q(N)},\cdot)\Rightarrow R$ for some 
$R\in\mathcal{P}(\mathcal{P}(E))$.  Note that for any probability measure 
$\rho$
$$
        \int |\rho'(f)-\rho(f)|\,\mathsf{R}_{N}(\rho,d\rho') \le
        \left[
        \int \{\rho'(f)-\rho(f)\}^2\,
        \mathsf{R}_{N}(\rho,d\rho')\right]^{\frac{1}{2}} =
        \sqrt{\frac{\rho(f^2)-\rho(f)^2}{N}} \le 
	\frac{\|f\|_\infty^2}{\sqrt{N}}.
$$
In particular, this shows that
\begin{multline*}
	\int |\nu'(f)-\nu(f)|\,R(d\nu') =
	\lim_{N\to\infty}\int |\nu'(f)-\nu(f)|\,
	\mathsf{R}_{Q(N)}(\nu_{Q(N)},d\nu') \\
	\le
	\lim_{N\to\infty}\int |\nu'(f)-\nu_{Q(N)}(f)|\,
        \mathsf{R}_{Q(N)}(\nu_{Q(N)},d\nu') +
	\lim_{N\to\infty}|\nu_{Q(N)}(f)-\nu(f)| = 0
\end{multline*}
for any bounded continuous function $f:E\to\mathbb{R}$.  Thus we must have
$R=\delta_\nu$.
\end{proof}

We can now complete the proof.

\begin{proof}[Proof of Proposition \ref{prop:bootconv}]
As $\Upsilon(\cdot,y)$ is bounded and continuous 
(assumption \ref{aspt:feller}), we have
$$
	\int f(x)\,\Upsilon(x,y)\,\nu_n(dx) \to
	\int f(x)\,\Upsilon(x,y)\,\nu(dx)
$$
for every $y$ whenever $f:E\to\mathbb{R}$ is bounded and continuous and 
$\nu_n\Rightarrow\nu$.  This implies that 
$\mathsf{U}(y,\nu_n)\Rightarrow\mathsf{U}(y,\nu)$ for every $y$, so in 
particular $(x',\nu')\mapsto F(x',\mathsf{U}(y,\nu'))\,\Upsilon(x',y)$ is 
bounded and continuous for every $y$ whenever $F:E\times\mathcal{P}(E)\to
\mathbb{R}$ is a bounded continuous function.  Using the Feller property 
of $P$ and lemma \ref{lem:resample}, it follows that whenever $x_N\to x$ 
and $\nu_N\Rightarrow\nu$
$$
	\int F(x',\mathsf{U}(y,\nu'))\,
	\Upsilon(x',y)\,\mathsf{R}_N(\nu_NP,d\nu')\,
	P(x_N,dx') \xrightarrow{N\to\infty}
	\int F(x',\mathsf{U}(y,\nu P))\,\Upsilon(x',y)\,
	P(x,dx')
$$
for every $y$ and bounded continuous function $F$.  But then we obtain
by dominated convergence
\begin{multline*}
        \int F(x',\nu')\,\mathsf{\Pi}_{N}(x_N,\nu_N,dx',d\nu') =
	\int F(x',\mathsf{U}(y,\nu'))\,
	\Upsilon(x',y)\,\mathsf{R}_N(\nu_NP,d\nu')\,
	P(x_N,dx')\,\varphi(dy) \\
	\xrightarrow{N\to\infty}
	\int F(x',\mathsf{U}(y,\nu P))\,\Upsilon(x',y)\,
	P(x,dx')\,\varphi(dy) = 
        \int F(x',\nu')\,\mathsf{\Pi}(x,\nu,dx',d\nu').
\end{multline*}
It remains to show that $M_N\Rightarrow M$.  This follows immediately, 
however, from lemma \ref{lem:resample}, the fact that 
$\pi\mapsto\mathsf{U}(y,\pi)$ is continuous, and dominated convergence.
The finite time convergence now follows from lemma \ref{lem:finhorz} 
(which does not rely on assumption \ref{aspt:harris}), and the proof is 
complete.
\end{proof}

\subsection{Proof of Theorem \ref{thm:main}}
\label{app:thm:main}

As both assumptions require geometric ergodicity, we fix throughout the 
corresponding function $V$ (which, by definition, is presumed to have 
compact level sets).  To complete the proof, it only remains to prove the 
tightness assumption of corollary \ref{cor:bootconv}.  We will in fact 
verify the simpler sufficient condition in lemma \ref{lem:tight} through 
the following elementary result.

\begin{lem}
Suppose that $\sup_{k,N}\mathbf{E}\pi_{k}^{N}(V)<\infty$.
Then the tightness assumption holds.
\end{lem}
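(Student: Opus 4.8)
The plan is to show that a uniform bound $\sup_{k,N}\mathbf{E}\pi_k^N(V)<\infty$ on the $V$-barycenters forces tightness of the family $\{\mathbf{E}\pi_k^N:k,N\ge 1\}$, after which lemma \ref{lem:tight} immediately yields the tightness assumption of corollary \ref{cor:bootconv}. The key point is that $V:E\to[1,\infty[$ has compact level sets, so Markov's inequality converts a moment bound into a tightness estimate. Concretely, writing $K_r=\{x\in E:V(x)\le r\}$, which is compact by hypothesis, for any $k,N$ we have
\begin{equation*}
	\mathbf{E}\pi_k^N(K_r^c) = \mathbf{E}\pi_k^N(\{V>r\})
	\le \frac{\mathbf{E}\pi_k^N(V)}{r} \le \frac{\sup_{j,M}\mathbf{E}\pi_j^M(V)}{r}.
\end{equation*}
Given $\varepsilon>0$, choosing $r$ large enough that the right-hand side is below $\varepsilon$ shows that $\mathbf{E}\pi_k^N(K_r)\ge 1-\varepsilon$ uniformly in $k,N$, which is precisely tightness of $\{\mathbf{E}\pi_k^N:k,N\ge 1\}$.

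First I would invoke the hypothesis to set $C:=\sup_{k,N}\mathbf{E}\pi_k^N(V)<\infty$. Next I would observe that the equality $\mathbf{E}\pi_k^N(V)=\int V\,d(\mathbf{E}\pi_k^N)$ holds by the definition of the barycenter (the measure $\mathbf{E}\pi_k^N$ assigns to a set $A$ the value $\mathbf{E}(\pi_k^N(A))$, and this extends to integrals of nonnegative measurable functions by monotone convergence), so $\int V\,d(\mathbf{E}\pi_k^N)\le C$ for all $k,N$. Then I would apply the Markov-type bound displayed above to conclude that $\sup_{k,N}\mathbf{E}\pi_k^N(K_r^c)\le C/r\to 0$ as $r\to\infty$. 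Since each $K_r$ is compact, this is exactly the statement that $\{\mathbf{E}\pi_k^N:k,N\ge 1\}$ is a tight family of probability measures on the Polish space $E$. Finally, lemma \ref{lem:tight} then gives the second condition of theorem \ref{thm:approx} in the form required by corollary \ref{cor:bootconv}, completing the proof.

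This argument is entirely routine and has no real obstacle; the only point requiring a moment's care is the justification that $V$-integration against the barycenter $\mathbf{E}\pi_k^N$ equals $\mathbf{E}\pi_k^N(V)$ (i.e.\ that expectation and integration against $V$ commute), which is a standard application of Tonelli/monotone convergence since $V\ge 1$ is nonnegative and measurable and $\pi_k^N$ is a genuine random probability measure. The substantive work of the theorem lies entirely in establishing the hypothesis $\sup_{k,N}\mathbf{E}\pi_k^N(V)<\infty$ under either assumption \ref{aspt:casei} or assumption \ref{aspt:caseii}, which is carried out in the remainder of this appendix; the present lemma is merely the bridge from that moment bound to the tightness condition of corollary \ref{cor:bootconv}.
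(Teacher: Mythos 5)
Your proof is correct and follows exactly the paper's argument: compact level sets of $V$ plus Markov's inequality give uniform tightness of the family $\{\mathbf{E}\pi_k^N\}$, and lemma \ref{lem:tight} finishes the job. The extra remark about justifying $\int V\,d(\mathbf{E}\pi_k^N)=\mathbf{E}\pi_k^N(V)$ via Tonelli is a harmless (and correct) elaboration that the paper leaves implicit.
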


\begin{proof}
The level sets $C_r = \{x\in E:V(x)\le r\}$ are compact.  But as
$$
        \sup_{k,N}\mathbf{E}\pi_{k}^N(C_r^c) =
        \sup_{k,N}\mathbf{E}\pi_{k}^N(V>r) \le
        \frac{\sup_{k,N}\mathbf{E}\pi_{k}^N(V)}{r}
        \xrightarrow{r\to\infty}0,
$$
evidently the family $\{\mathbf{E}\pi_{k}^N:k,N\ge 1\}$ is tight, and we 
may invoke lemma \ref{lem:tight}.
\end{proof}

In the following, it is convenient to introduce the measure-valued process
$$
	\pi_{k-}^N(A) := \frac{\int I_A(x)\,\Upsilon(x,Y_k)^{-1}\,
		\pi_k^N(dx)}{\int \Upsilon(x,Y_k)^{-1}\,\pi_k^N(dx)},
$$
so that $\pi_k^N = \mathsf{U}(Y_k,\pi_{k-}^N)$.  Note that $\pi_{k-}^N$ is 
the bootstrap particle filter approximation to the one step predictor 
$\pi_{k-}$ (in fact, our main results are easily adapted to establish 
uniform time average convergence of $\pi_{k-}^N$ to $\pi_{k-}$).  The 
following result is the key tool that allows us to establish tightness.  
The condition of this lemma---essentially, the requirement that the update 
step $\pi_{k-}^N\mapsto\pi_k^N$ does not `expand' too much---will be 
verified separately under the assumptions \ref{aspt:casei} and 
\ref{aspt:caseii}.

\begin{lem}
Suppose the signal is geometrically ergodic and $\mu(V)<\infty$.
If there exist constants $c_1,c_2\ge 0$ such that $\mathbf{E}\pi_{k}^N(V)\le 
c_1\,\mathbf{E}\pi_{k-}^N(V)+c_2$ for all $k$, then 
$\sup_{k,N}\mathbf{E}\pi_{k}^{N}(V)<\infty$.
\end{lem}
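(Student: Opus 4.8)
The plan is to obtain a recursion for the quantity $a_k^N := \mathbf{E}\pi_k^N(V)$ and to iterate it. By hypothesis we have the ``update'' bound $\mathbf{E}\pi_k^N(V)\le c_1\,\mathbf{E}\pi_{k-}^N(V)+c_2$, so the first order of business is to control the ``predict and resample'' step, i.e.\ to bound $\mathbf{E}\pi_{k-}^N(V)$ in terms of $\mathbf{E}\pi_{k-1}^N(V)$. Here the key observations are: (i) the resampling kernel $\mathsf{R}_N$ does not change the barycenter, so that $\mathbf{E}[\pi_{k-1}^N(PV)]$ is unchanged after the i.i.d.\ sampling step; and (ii) by the Feller/Markov construction of the bootstrap filter, $\pi_{k-}^N(V) = \tilde\pi_{k-1}^N(PV)$ where $\tilde\pi_{k-1}^N$ is the resampled version of $\pi_{k-1}^N$, hence $\mathbf{E}\pi_{k-}^N(V) = \mathbf{E}[\pi_{k-1}^N(PV)]$. (Strictly, one must be slightly careful that $V$ is unbounded, so I would first establish the identity for $V\wedge m$ and let $m\to\infty$ by monotone convergence.) This reduces everything to bounding $PV$ in terms of $V$.

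The bound on $PV$ comes from geometric ergodicity. I would invoke the standard fact (see \cite{MT93}) that geometric ergodicity of the signal, as defined in the text via $\|P^k(x,\cdot)-\lambda\|_V\le C\,V(x)\,\beta^k$, is equivalent to the existence of a geometric drift condition $PV\le \gamma\,V + L\,I_{C}$ for some $\gamma<1$, $L<\infty$ and a small (in particular $\lambda$-positive, compact) set $C$; since $V$ has compact level sets and $I_C\le 1$, this gives in particular $PV\le \gamma\,V + L$ pointwise. Actually one can extract the needed inequality more directly: from $\|P^k(x,\cdot)-\lambda\|_V\le C\,V(x)\,\beta^k$ with $k=1$ we get $P V(x) \le \lambda(V) + C\,\beta\,V(x)$, and $\lambda(V)<\infty$ because $\|\delta_x P^k-\lambda\|_V\to 0$ forces $\lambda(V)\le CV(x)+V(x)<\infty$ (alternatively $\lambda(V)<\infty$ is part of the standard theory). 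Either way we obtain constants $\gamma<1$ and $b<\infty$ with $PV \le \gamma V + b$ on all of $E$.

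Combining the pieces gives a clean scalar recursion. Writing $a_k^N=\mathbf{E}\pi_k^N(V)$, we have
$$
\mathbf{E}\pi_{k-}^N(V) = \mathbf{E}[\pi_{k-1}^N(PV)] \le \gamma\,a_{k-1}^N + b,
$$
and then by the hypothesis of the lemma,
$$
a_k^N \le c_1(\gamma\,a_{k-1}^N + b) + c_2 = c_1\gamma\,a_{k-1}^N + (c_1 b + c_2).
$$
Here the crux of the matter — and the step I expect to be the real obstacle — is that this recursion is only contractive if $c_1\gamma<1$; there is no reason a priori for the one-step constant $c_1$ coming from the update step to be small, so a single application of the drift inequality need not suffice. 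The remedy is to iterate the signal drift over $m$ steps \emph{before} invoking the update bound: from $PV\le\gamma V+b$ one gets $P^m V \le \gamma^m V + b/(1-\gamma)$, and one reorganizes the $m$-step filter recursion so that $\mathbf{E}\pi_{k-}^N(V)$ (after $m$ predict-resample steps interleaved with update steps) is bounded by $(c_1\gamma)^{\,m'}\mathbf{E}\pi_{k-m}^N(V) + (\text{const})$ — but this requires commuting the update steps past the drift, which is exactly what the geometric decay $\gamma^m$ buys once $m$ is large enough that the accumulated update factor is beaten. Concretely: since $\gamma<1$, choose $m$ with $c_1\,\gamma^m < 1$; iterating the two bounds over a block of $m$ time steps and using $P^jV\le V + b/(1-\gamma)$ crudely for the intermediate factors $j<m$, one arrives at a recursion $a_{k}^N \le \theta\,a_{k-m}^N + B$ with $\theta := c_1\gamma^m<1$ and $B<\infty$ depending only on $c_1,c_2,b,\gamma,m$. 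Together with the initial bound $a_0^N \le c_1\,\mu(PV) + c_2 \le c_1(\gamma\mu(V)+b)+c_2 < \infty$ (using $\mu(V)<\infty$), an elementary induction over blocks of length $m$ gives $a_k^N \le \max_{0\le j<m} a_j^N + B/(1-\theta)$ for all $k$, and the bound is uniform in $N$ because none of $c_1,c_2,\gamma,b,m$ depends on $N$. Hence $\sup_{k,N}\mathbf{E}\pi_k^N(V)<\infty$, which is the claim.
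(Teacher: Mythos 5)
Your reduction to the scalar recursion $a_k^N \le c_1\gamma\,a_{k-1}^N + (c_1 b + c_2)$ is sound as far as it goes: the identity $\mathbf{E}\pi_{k-}^N(V)=\mathbf{E}\pi_{k-1}^N(PV)$ (averaging out the sampling step) and a drift bound derived from geometric ergodicity are both ingredients of the paper's proof, and you correctly identify the obstruction, namely that nothing forces $c_1\gamma<1$. But your proposed repair does not work. Over a block of $m$ time steps the update hypothesis must be invoked once per step --- each step consists of one prediction (contributing a factor $\gamma$) followed by one update (contributing a factor $c_1$) --- so the honest composition gives $a_k^N\le(c_1\gamma)^m a_{k-m}^N+B_m$, not $c_1\gamma^m\,a_{k-m}^N+B$. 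There is no mechanism for ``commuting the update steps past the drift'': the hypothesis controls the update of $V$ only, once per time step, and if $c_1\gamma\ge 1$ the block recursion you assert is simply not available, while the $(c_1\gamma)^m$ version one actually gets is useless (iterating it over blocks yields $a_{jm}^N\lesssim(c_1\gamma)^{jm}a_0^N$). This is the crux of the lemma, and it is exactly where your argument breaks.

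The paper avoids compounding the factor $c_1$ by a structurally different device: a telescoping comparison with the exact predictor. Writing $\mathbf{E}\pi_{k-}^N(f)-\mu P^k(f)=\sum_{\ell=1}^k\{\mathbf{E}\pi_{\ell-1}^N-\mathbf{E}\pi_{(\ell-1)-}^N\}(P^{k-\ell+1}f)$, the perturbation committed at step $\ell$ is a difference of two \emph{probability} measures, so geometric ergodicity in the form $\|(\nu_1-\nu_2)P^j\|_V\le c_3\,\beta^j\,\|\nu_1-\nu_2\|_V$ damps it by $\beta^{k-\ell+1}$; the update hypothesis is then used only to bound $\|\mathbf{E}\pi_{\ell-1}^N-\mathbf{E}\pi_{(\ell-1)-}^N\|_V\le(c_1+1)\,\mathbf{E}\pi_{(\ell-1)-}^N(V)+c_2$, so $c_1$ enters once, additively, in each summand rather than multiplicatively across steps. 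The resulting inequality $\alpha_k\le c_4+c_3(c_1+1)\sum_{\ell=1}^k\beta^{k-\ell+1}\alpha_{\ell-1}$ is closed by a Gr\"onwall-type summation whose coefficients have uniformly bounded sum. If you want to complete your proof you need this (or an equivalent) perturbation idea; composing the two one-step bounds cannot succeed without an additional smallness assumption on $c_1$. As an aside, the difficulty you ran into is genuinely present in the problem: even the final Gr\"onwall step with the $k$-dependent weights $\beta^{k-\ell+1}$ requires care, since the extremal sequence of that inequality obeys $\alpha_{k+1}=c_4(1-\beta)+\beta\,(c_3(c_1+1)+1)\,\alpha_k$ and is therefore not automatically bounded when $c_1$ is large.
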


\begin{proof}
Note that $\mathbf{E}\pi_{k-}(f)=\mu(P^{k}f)=\mathbf{E}\pi_{0-}^N(P^kf)$
for all $|f|\le V$.  Therefore
$$
        \mathbf{E}\pi_{k-}^N(f)-\mathbf{E}\pi_{k-}(f) =
        \sum_{\ell=1}^k \{\mathbf{E}\pi_{\ell-}^N(P^{k-\ell}f)-
                \mathbf{E}\pi_{(\ell-1)-}^N(P^{k-\ell+1}f)\}.
$$
But note that $\mathbf{E}\pi_{\ell-}^N(f)=\mathbf{E}\pi_{\ell-1}^N(Pf)$, 
as we may average over the last sampling step.  Therefore
$$
        \mathbf{E}\pi_{k-}^N(f)-\mathbf{E}\pi_{k-}(f) =
        \sum_{\ell=1}^k \{\mathbf{E}\pi_{\ell-1}^N(P^{k-\ell+1}f)-
                \mathbf{E}\pi_{(\ell-1)-}^N(P^{k-\ell+1}f)\}.
$$
As the signal is assumed geometrically ergodic, we have 
$\lambda(V)<\infty$ and
$$
        \|P^k(x,\,\cdot\,)-\lambda\|_V\le c_3\,V(x)\,\beta^k
        \quad\mbox{for all }x\in E, ~k\ge 0,
$$
for some constants $c_3<\infty$, $\beta<1$. In particular, we 
find that for any measures $\nu_1,\nu_2$
$$
        \|\nu_1P^k-\nu_2P^k\|_V=
        \sup_{|f|\le V}|\{\nu_1-\nu_2\}(P^kf-\lambda(f))|\le
        c_3\,\beta^k\,|\nu_1-\nu_2|(V) = c_3\,\beta^k\,\|\nu_1-\nu_2\|_V.
$$
Therefore we can estimate
$$
        \|\mathbf{E}\pi_{k-}^N-\mathbf{E}\pi_{k-}\|_V\le
        \sum_{\ell=1}^k \|\mathbf{E}\pi_{\ell-1}^NP^{k-\ell+1}-
                \mathbf{E}\pi_{(\ell-1)-}^NP^{k-\ell+1}\|_V \le
        \sum_{\ell=1}^k c_3\,\beta^{k-\ell+1}\,
        \|\mathbf{E}\pi_{\ell-1}^N-\mathbf{E}\pi_{(\ell-1)-}^N\|_V.
$$
In particular, we find that
$$
        \mathbf{E}\pi_{k-}^N(V)\le
        \mathbf{E}\pi_{k-}(V) + \|\mathbf{E}\pi_{k-}^N-\mathbf{E}\pi_{k-}\|_V
        \le
        \mu P^k(V) + 
        \sum_{\ell=1}^k c_3\,\beta^{k-\ell+1}\,
        \{\mathbf{E}\pi_{\ell-1}^N(V)+\mathbf{E}\pi_{(\ell-1)-}^N(V)\}.
$$
By the assumption of the lemma we now obtain
$$
        \mathbf{E}\pi_{k-}^N(V)\le
        \mu P^k(V) + 
        \sum_{\ell=1}^k c_3\,\beta^{k-\ell+1}\,
        \{(c_1+1)\,\mathbf{E}\pi_{(\ell-1)-}^N(V) + c_2\}.
$$
But $\mu P^k(V)\to \lambda(V)$ as $k\to\infty$, so $c_4=\sup_k\mu 
P^k(V) + c_2c_3\beta/(1-\beta)<\infty$.  By lemma \ref{lem:gron} below
$$
        \mathbf{E}\pi_{k-}^N(V) \le c_4\,\exp\left(
                \sum_{\ell=1}^k (c_1+1)c_3\,\beta^{k-\ell+1}
        \right)\le
        c_4\,\exp\left(
        \frac{\beta(c_1+1)c_3}{1-\beta}
        \right).
$$
But as $\mathbf{E}\pi_{k}^N(V)\le c_1\,\mathbf{E}\pi_{k-}^N(V)+c_2$,
the proof is evidently complete.
\end{proof}

In the previous proof, we needed the following.

\begin{lem}[Discrete Gr{\"o}nwall]
\label{lem:gron}
Suppose $(A,\alpha_k,B_k)$, $k\ge 0$ are nonnegative scalars such that
$$
        \alpha_k \le A + \sum_{\ell=1}^k B_\ell\,\alpha_{\ell-1}
        \quad\mbox{for all }k\ge 0.
$$
Then it must be the case that
$$
        \alpha_k \le A\,\exp\left(
                \sum_{\ell=1}^k B_\ell
        \right)\quad\mbox{for all }k\ge 0.
$$
\end{lem}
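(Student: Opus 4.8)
The plan is to prove this by the standard device of replacing the (somewhat awkward) coupled inequality for $\alpha_k$ by a one-step recursion for the partial sums on the right-hand side, solving that recursion as a product, and then crudely bounding the product by an exponential.

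Concretely, first I would introduce the auxiliary sequence $\beta_k = A + \sum_{\ell=1}^k B_\ell\,\alpha_{\ell-1}$ for $k\ge 0$, so that $\beta_0 = A$ (empty sum) and $\alpha_k \le \beta_k$ for all $k$ by hypothesis. Next I would observe that $\beta_k - \beta_{k-1} = B_k\,\alpha_{k-1} \le B_k\,\beta_{k-1}$, using $B_k\ge 0$ and $\alpha_{k-1}\le\beta_{k-1}$; hence $\beta_k \le (1+B_k)\,\beta_{k-1}$ for every $k\ge 1$. Iterating this one-step bound from $\beta_0 = A$ gives $\beta_k \le A\prod_{\ell=1}^k(1+B_\ell)$.

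Finally I would invoke the elementary inequality $1+x\le e^x$, valid for all real $x$, to estimate $\prod_{\ell=1}^k(1+B_\ell) \le \prod_{\ell=1}^k e^{B_\ell} = \exp(\sum_{\ell=1}^k B_\ell)$, whence $\alpha_k \le \beta_k \le A\,\exp(\sum_{\ell=1}^k B_\ell)$, as claimed. The case $k=0$ requires no separate argument: the empty product and empty sum both give $\alpha_0\le\beta_0 = A = A\,e^{0}$.

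I do not expect a genuine obstacle here; the lemma is routine. The only point worth a moment's care is precisely the introduction of $\beta_k$: a naive induction directly on the $\alpha_k$ is clumsy because the hypothesis couples $\alpha_k$ simultaneously to all of $\alpha_0,\ldots,\alpha_{k-1}$, whereas $\beta_k$ satisfies the clean recursion $\beta_k\le(1+B_k)\beta_{k-1}$ that telescopes immediately.
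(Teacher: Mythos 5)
Your proof is correct and follows essentially the same route as the paper: both arguments establish the intermediate product bound $\alpha_k \le A\prod_{\ell=1}^k(1+B_\ell)$ and then conclude via $1+x\le e^x$. The only difference is organizational --- the paper verifies the product bound by a direct induction on $\alpha_k$ whose sum telescopes, whereas you pass through the auxiliary majorant $\beta_k$ satisfying the one-step recursion $\beta_k\le(1+B_k)\beta_{k-1}$; both are standard and equally valid.
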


\begin{proof}
As $\log(1+x)\le x$, it suffices to prove the first inequality in
$$
        \alpha_k\le A\prod_{\ell=1}^{k}(1+B_\ell)=
        A\,\exp\left(
        \sum_{\ell=1}^k\log(1+B_\ell)
        \right)\le
        A\,\exp\left(\sum_{\ell=1}^k B_\ell\right).
$$
We proceed by induction.  Clearly the statement is true for $k=0$.  Now 
suppose we have verified the statement for all $\ell<k$.  Then by 
assumption
$$
        \alpha_k \le A + A\sum_{\ell=1}^k 
B_\ell\prod_{r=1}^{\ell-1}(1+B_r) =
        A + A\sum_{\ell=1}^k \left\{(1+B_\ell)\prod_{r=1}^{\ell-1}(1+B_r)-
        \prod_{r=1}^{\ell-1}(1+B_r)\right\}.
$$
But the rightmost expression is evidently a telescoping sum which reduces 
to
$$
        \alpha_k \le A + A\left\{\prod_{r=1}^{k}(1+B_r)-1\right\} =
        A\prod_{r=1}^{k}(1+B_r).
$$
The proof is complete.
\end{proof}

It remains to show that $\mathbf{E}\pi_{k}^N(V)\le
c_1\,\mathbf{E}\pi_{k-}^N(V)+c_2$.  Here we distinguish between the two 
separate cases of assumptions \ref{aspt:casei} and \ref{aspt:caseii}.
The results below complete the proof of theorem \ref{thm:main}.

\subsubsection{Case I}

In the setting of assumption \ref{aspt:casei}, the result is 
straightforward.

\begin{lem}
Suppose that assumptions \ref{aspt:nondeg} and \ref{aspt:casei} hold.
Then $\mathbf{E}\pi_{k}^N(V)\le c_1\,\mathbf{E}\pi_{k-}^N(V)+c_2$.
\end{lem}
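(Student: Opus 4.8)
The plan is to exploit the two-sided bound $u_-(y)\le\Upsilon(x,y)\le u_+(y)$ from assumption \ref{aspt:casei} to control the update step $\pi_{k-}^N\mapsto\pi_k^N$, combined with the observation that the conditional law of the new observation $Y_k$ is governed by the \emph{exact} predictor $\pi_{k-}$, whose observation density is again dominated by $u_+$. First I would write out the update step explicitly: since $\pi_k^N=\mathsf{U}(Y_k,\pi_{k-}^N)$,
$$
	\pi_k^N(V) = \frac{\int V(x)\,\Upsilon(x,Y_k)\,\pi_{k-}^N(dx)}
	{\int \Upsilon(x,Y_k)\,\pi_{k-}^N(dx)}
	\le \frac{u_+(Y_k)}{u_-(Y_k)}\,\pi_{k-}^N(V),
$$
where the inequality uses $u_-(Y_k)\le\Upsilon(x,Y_k)$ in the denominator and $\Upsilon(x,Y_k)\le u_+(Y_k)$ in the numerator, both valid for every $x\in E$.

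Next I would condition on $\mathcal{F}_{k-1}^Y\vee\mathcal{G}$. The bootstrap construction shows that $\pi_{k-}^N$ depends only on $Y_0,\ldots,Y_{k-1}$ and the sampling randomness $\mathcal{G}$, so it is $\mathcal{F}_{k-1}^Y\vee\mathcal{G}$-measurable and factors out of the conditional expectation. Since $\mathcal{G}$ is independent of the hidden Markov model (assumption \ref{aspt:approx}), the conditional law of $Y_k$ given $\mathcal{F}_{k-1}^Y\vee\mathcal{G}$ coincides with its conditional law given $\mathcal{F}_{k-1}^Y$, which by the Bayes formula has $\varphi$-density $y\mapsto\int\Upsilon(x,y)\,\pi_{k-}(dx)$. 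Hence
$$
	\mathbf{E}(\pi_k^N(V)\,|\,\mathcal{F}_{k-1}^Y\vee\mathcal{G})
	\le
	\pi_{k-}^N(V)\int \frac{u_+(y)}{u_-(y)}
	\left(\int\Upsilon(x,y)\,\pi_{k-}(dx)\right)\varphi(dy).
$$

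Finally, bounding $\int\Upsilon(x,y)\,\pi_{k-}(dx)\le u_+(y)$ and using the integrability hypothesis $\int u_+(y)^2/u_-(y)\,\varphi(dy)<\infty$ of assumption \ref{aspt:casei} shows that the integral above is a finite constant $c_1$; taking expectations then yields $\mathbf{E}\pi_k^N(V)\le c_1\,\mathbf{E}\pi_{k-}^N(V)$, i.e.\ the claimed inequality with $c_2=0$. The only genuinely delicate point is the bookkeeping in the conditioning step: one must check that $\pi_{k-}^N$ is measurable with respect to the conditioning $\sigma$-field, while the conditional law of $Y_k$ is still the one dictated by the exact predictor $\pi_{k-}$ rather than by $\pi_{k-}^N$; everything else is a direct estimate.
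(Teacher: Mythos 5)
Your proof is correct and follows essentially the same route as the paper: bound $\pi_k^N(V)\le \{u_+(Y_k)/u_-(Y_k)\}\,\pi_{k-}^N(V)$, condition on the past, and integrate $u_+/u_-$ against the conditional law of $Y_k$, which is governed by the exact predictor and dominated by $u_+(y)\,\varphi(dy)$. If anything you are slightly more careful than the paper, which conditions only on $Y_0,\ldots,Y_{k-1}$ when pulling out $\pi_{k-}^N(V)$; enlarging the conditioning $\sigma$-field to $\mathcal{F}_{k-1}^Y\vee\mathcal{G}$ as you do is the clean way to justify that step.
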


\begin{proof}
Note that
$$
	\pi_{k}^N(V) = \frac{\int V(x)\,\Upsilon(x,Y_k)\,
	\pi_{k-}^N(dx)}{\int\Upsilon(x,Y_k)\,\pi_{k-}^N(dx)} \le
	\frac{u_+(Y_k)}{u_-(Y_k)}\int V(x)\,\pi_{k-}^N(dx).
$$
We may therefore estimate
\begin{multline*}
	\mathbf{E}(\pi_{k}^N(V)|Y_0,\ldots,Y_{k-1}) \le
	\pi_{k-}^N(V)\,\mathbf{E}\left[\left.
	\frac{u_+(Y_k)}{u_-(Y_k)}\right|Y_0,\ldots,Y_{k-1}\right] \\
	= \pi_{k-}^N(V)\int \frac{u_+(y)}{u_-(y)}\,
		\Upsilon(x,y)\,\varphi(dy)\,\pi_{k-}(dx)
	\le \pi_{k-}^N(V)\int \frac{u_+(y)^2}{u_-(y)}\,\varphi(dy).
\end{multline*}
Taking the expectation of both sides completes the proof.
\end{proof}

\subsubsection{Case II}

In the setting of assumption \ref{aspt:caseii} we will 
need the following result, whose proof we recall for the reader's 
convenience, to control the growth of the update step.

\begin{lem}[Chebyshev's covariance inequality]
\label{lem:cheb}
Let $\psi,\phi:\mathbb{R}\to\mathbb{R}$ be nondecreasing functions and let
$\nu$ be any probability measure on 
$(\mathbb{R},\mathcal{B}(\mathbb{R}))$.  Then
$$
        \int \psi(x)\,\phi(x)\,\nu(dx) - 
        \int \psi(x)\,\nu(dx)\int\phi(x)\,\nu(dx)\ge 0,
$$
i.e., the covariance of $\psi$ and $\phi$ is always nonnegative.
\end{lem}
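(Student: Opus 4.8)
The plan is to use the classical \emph{doubling} trick. First I would introduce two independent random variables $X$ and $Y$, both distributed according to $\nu$, on some auxiliary probability space. The key elementary observation is that since $\psi$ and $\phi$ are both nondecreasing, the quantities $\psi(X)-\psi(Y)$ and $\phi(X)-\phi(Y)$ always have the same sign (allowing for the possibility that one of them vanishes): whenever $X\ge Y$ both are nonnegative, and whenever $X\le Y$ both are nonpositive. Hence
$$
        (\psi(X)-\psi(Y))(\phi(X)-\phi(Y)) \ge 0
$$
holds pointwise, i.e., $\mathbf{P}$-almost surely.

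Next I would take expectations of this inequality and expand the product. Using that $X$ and $Y$ are i.i.d.\ with law $\nu$, the ``diagonal'' terms satisfy $\mathbf{E}(\psi(X)\phi(X)) = \mathbf{E}(\psi(Y)\phi(Y)) = \int\psi\,\phi\,d\nu$, while the cross terms factor by independence as $\mathbf{E}(\psi(X)\phi(Y)) = \mathbf{E}(\psi(Y)\phi(X)) = \int\psi\,d\nu\int\phi\,d\nu$. Therefore
$$
        0 \le \mathbf{E}\big((\psi(X)-\psi(Y))(\phi(X)-\phi(Y))\big)
        = 2\left(\int\psi\,\phi\,d\nu - \int\psi\,d\nu\int\phi\,d\nu\right),
$$
which is precisely the claimed inequality after dividing by $2$.

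The only point requiring care is integrability, since the statement as phrased does not presume it. When the relevant integrals are finite the manipulation above is rigorous as written; in general one first applies the argument to the bounded truncations $\psi_n=(\psi\wedge n)\vee(-n)$ and $\phi_n=(\phi\wedge n)\vee(-n)$, which are again nondecreasing, obtaining $\int\psi_n\,\phi_n\,d\nu \ge \int\psi_n\,d\nu\int\phi_n\,d\nu$, and then passes to the limit $n\to\infty$ via monotone or dominated convergence. I expect this truncation bookkeeping to be the only mildly delicate step; the probabilistic core of the argument is entirely elementary, and in the intended applications (with $V$-type and $\Upsilon$-type integrands) the finiteness of the integrals is clear so no truncation is actually needed.
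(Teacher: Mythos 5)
Your argument is correct and is essentially identical to the paper's proof: the expectation $\mathbf{E}\bigl((\psi(X)-\psi(Y))(\phi(X)-\phi(Y))\bigr)$ over independent $X,Y\sim\nu$ is just the paper's symmetrized double integral $\int\{\psi(x)-\psi(x')\}\{\phi(x)-\phi(x')\}\,\nu(dx)\,\nu(dx')$ written in probabilistic language. The only addition is your remark on truncation for integrability, which the paper omits but which is harmless and indeed unnecessary in the intended application.
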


\begin{proof}
Note that
$$
        \int \psi(x)\,\phi(x)\,\nu(dx) - 
        \int \psi(x)\,\nu(dx)\int\phi(x)\,\nu(dx) =
        \frac{1}{2}\int
        \{\psi(x)-\psi(x')\}\,\{\phi(x)-\phi(x')\}\,
        \nu(dx)\,\nu(dx').
$$
But by our assumptions the integrand is nonnegative, and the result 
follows.
\end{proof}

We obtain the following result.

\begin{lem}
Suppose that assumption \ref{aspt:caseii} holds and $\mu(V)<\infty$.
Then $\mathbf{E}\pi_{k}^N(V)\le c_1\,\mathbf{E}\pi_{k-}^N(V)+c_2$.
\end{lem}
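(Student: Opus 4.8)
The plan is to prove the pointwise bound
$$
	\pi_k^N(V) \le c_1'\,\pi_{k-}^N(V) + c_2'\,\|Y_k\|^p + c_3'
$$
with constants $c_1',c_2',c_3'$ depending only on the model parameters, and then take expectations, using that $\mathbf{E}\|Y_k\|^p$ is bounded uniformly in $k$. Since $\pi_k^N=\mathsf{U}(Y_k,\pi_{k-}^N)$ and, by the remark following assumption \ref{aspt:caseii}, $\Upsilon(x,y)=q_\xi(\sigma(x)^{-1}\{y-h(x)\})$, we have
$$
	\pi_k^N(V) = \frac{\pi_{k-}^N(V\,\Upsilon(\cdot,Y_k))}{\pi_{k-}^N(\Upsilon(\cdot,Y_k))},
$$
so everything reduces to estimating this ratio for each fixed value $y$ of $Y_k$.

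The key device is Chebyshev's covariance inequality (lemma \ref{lem:cheb}) applied along the scalar statistic $g_y(x):=|\sigma(x)^{-1}\{y-h(x)\}|$. First I would record the elementary consequences of assumption \ref{aspt:caseii}: by the two-sided bound on $\sigma$ and the equivalence of norms on $\mathbb{R}^d$ there are constants $0<\gamma_1\le\gamma_2<\infty$ with $\gamma_1\|y-h(x)\|\le g_y(x)\le\gamma_2\|y-h(x)\|$; by part (4), $a_1\,q(g_y(x))\le\Upsilon(x,y)\le a_2\,q(g_y(x))$ with $q$ nonincreasing; and by part (5) together with $\|h(x)\|\le\|y\|+\|y-h(x)\|\le\|y\|+\gamma_1^{-1}g_y(x)$ and $(a+b)^p\le C_p(a^p+b^p)$, one gets $V(x)\le\Theta_y(g_y(x))$ where $\Theta_y(s):=b_3C_p\|y\|^p+b_3C_p\gamma_1^{-p}s^p+|b_4|$ is nondecreasing in $s$. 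Pushing $\pi_{k-}^N$ forward under $g_y$ to a probability measure $\nu$ on $[0,\infty[\mbox{}$, we have $V(x)\Upsilon(x,y)\le a_2\,\Theta_y(g_y(x))\,q(g_y(x))$ pointwise, and lemma \ref{lem:cheb} (applied to the nondecreasing $\Theta_y$ and $-q$) gives $\pi_{k-}^N(V\Upsilon(\cdot,y))\le a_2\int\Theta_y\,q\,d\nu\le a_2\big(\int\Theta_y\,d\nu\big)\big(\int q\,d\nu\big)$, while trivially $\pi_{k-}^N(\Upsilon(\cdot,y))\ge a_1\int q\,d\nu>0$. Dividing, the factor $\int q\,d\nu$ cancels and we are left with
$$
	\pi_k^N(V) \le \frac{a_2}{a_1}\int\Theta_y\,d\nu = \frac{a_2}{a_1}\int\Theta_y(g_y(x))\,\pi_{k-}^N(dx).
$$
This is the crucial point: because $\Upsilon(\cdot,y)$ is squeezed between \emph{constant} multiples of one and the same nonincreasing function of $g_y$, its contribution to numerator and denominator cancels up to the harmless factor $a_2/a_1$, and no comparison between $\pi_{k-}^N$ and the exact predictor is needed.

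It then remains to estimate the polynomial moment $\int\Theta_y(g_y(x))\,\pi_{k-}^N(dx)$: using $g_y(x)\le\gamma_2\|y-h(x)\|\le\gamma_2(\|y\|+\|h(x)\|)$, $(a+b)^p\le C_p(a^p+b^p)$, and the lower half of assumption \ref{aspt:caseii}(5) in the form $\|h(x)\|^p\le b_1^{-1}(V(x)+|b_2|)$, this is bounded by $c_1'\,\pi_{k-}^N(V)+c_2'\,\|y\|^p+c_3'$ with constants depending only on $a_1,a_2,b_1,\dots,b_4,p,\varepsilon$ and the norm-equivalence constants. Taking $y=Y_k$ and expectations gives $\mathbf{E}\pi_k^N(V)\le c_1'\,\mathbf{E}\pi_{k-}^N(V)+c_2'\,\mathbf{E}\|Y_k\|^p+c_3'$. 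Finally, $Y_k=h(X_k)+\sigma(X_k)\xi_k$ with $\xi_k$ independent of $X_k$, so $\mathbf{E}\|Y_k\|^p\le C_p\,\mathbf{E}\|h(X_k)\|^p+C_p\varepsilon^{-p}\,\mathbf{E}\|\xi_k\|^p$; here $\mathbf{E}\|\xi_k\|^p<\infty$ by assumption \ref{aspt:caseii}(5), while $\mathbf{E}\|h(X_k)\|^p=\mu P^k(\|h\|^p)\le b_1^{-1}(\mu P^k(V)+|b_2|)$, which is bounded uniformly in $k$ since geometric ergodicity and $\mu(V)<\infty$ give $\sup_k\mu P^k(V)\le\lambda(V)+C\mu(V)<\infty$. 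Absorbing $\mathbf{E}\|Y_k\|^p$ into the additive constant yields $\mathbf{E}\pi_k^N(V)\le c_1\,\mathbf{E}\pi_{k-}^N(V)+c_2$ as required.

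The main obstacle — and the reason the simple Case I argument does not apply — is the choice of the scalar statistic along which Chebyshev's inequality is used. Bounding $\Upsilon(\cdot,y)$ by a nonincreasing function of $\|h(x)\|$ rather than of $g_y(x)$ would force different multiplicative \emph{scalings} in the upper and lower bounds (because $\sigma$ depends on $x$ and $q_\xi$ need not be radial), and the resulting mismatch is not integrable in $y$; it is precisely the statistic $g_y(x)=|\sigma(x)^{-1}\{y-h(x)\}|$ that makes the $\Upsilon$-weighting cancel exactly and reduces matters to a polynomial moment of $\pi_{k-}^N$.
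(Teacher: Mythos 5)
Your proposal is correct and follows essentially the same route as the paper's proof: the same statistic $g_y(x)=|\sigma(x)^{-1}\{y-h(x)\}|$, the same use of the two-sided bound $a_1q\le\Upsilon\le a_2q$ to cancel the likelihood weighting up to the factor $a_2/a_1$ via Chebyshev's covariance inequality, and the same uniform-in-$k$ bound on $\mathbf{E}\|Y_k\|^p$ from geometric ergodicity. The only cosmetic difference is that you apply lemma \ref{lem:cheb} once to the full nondecreasing majorant $\Theta_y$ of $V$, whereas the paper first splits off the $\|Y_k\|^p$ and constant terms (which pass through the normalized ratio trivially) and applies the inequality only to $s\mapsto s^p$.
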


\begin{proof}
We choose $\varphi$ to be the Lebesgue measure and 
$\Upsilon(x,y)=q_\xi(\sigma(x)^{-1}\{y-h(x)\})$.  Note that
$$
	a_1\,q(|\sigma(x)^{-1}\{y-h(x)\}|)\le \Upsilon(x,y)\le 
	a_2\,q(|\sigma(x)^{-1}\{y-h(x)\}|).
$$
As all finite dimensional norms are equivalent, we have
$\kappa^{-1}\|v\|\le |v|\le \kappa\|v\|$ for all $v$ and some $\kappa>0$.
Using $(a+b)^p\le C_p(a^p+b^p)$ where $C_p=\max(1,2^{p-1})$,
we can therefore estimate
\begin{equation*}
\begin{split}
	\|h(x)\|^p
	&\le\{\|Y_k-h(x)\|+\|Y_k\|\}^p \\
	&\le\{\varepsilon^{-1}\|\sigma(x)^{-1}\{Y_k-h(x)\}\|+\|Y_k\|\}^p
	\\
	&\le C_p\varepsilon^{-p}\kappa^p|\sigma(x)^{-1}\{Y_k-h(x)\}|^p
	+C_p\|Y_k\|^p.
\end{split}
\end{equation*}
In particular, using that $V(x)\le b_3\|h(x)\|^p+b_4$, we find
$$
	\pi_k^N(V) \le
	\frac{C_p\kappa^pa_2b_3}{\varepsilon^pa_1}
	\frac{\int |\sigma(x)^{-1}\{Y_k-h(x)\}|^p\,
		q(|\sigma(x)^{-1}\{Y_k-h(x)\}|)\,\pi_{k-}^N(dx)}{
		\int q(|\sigma(x)^{-1}\{Y_k-h(x)\}|)\,\pi_{k-}^N(dx)} 
	+ C_pb_3\|Y_k\|^p + b_4.
$$
But $q$ is nonincreasing and $v\mapsto v^p$ is nondecreasing, so by
lemma \ref{lem:cheb}
$$
	\pi_k^N(V) \le
	\frac{C_p\kappa^pa_2b_3}{\varepsilon^pa_1}
	\int |\sigma(x)^{-1}\{Y_k-h(x)\}|^p\,\pi_{k-}^N(dx)
	+ C_pb_3\|Y_k\|^p + b_4.
$$
Now note that
\begin{equation*}
\begin{split}
	|\sigma(x)^{-1}\{Y_k-h(x)\}|^p
	&\le C_p\varepsilon^{-p}\kappa^p\|Y_k\|^p
		+C_p\varepsilon^{-p}\kappa^p\|h(x)\|^p \\
	&\le C_p\varepsilon^{-p}\kappa^p\|Y_k\|^p
		+ C_p\varepsilon^{-p}\kappa^p\{V(x)-b_2\}/b_1.
\end{split}
\end{equation*}
Substituting in the above expression, we obtain
$$
	\pi_k^N(V) \le
	\frac{C_p^2\kappa^{2p}a_2b_3}{\varepsilon^{2p}a_1b_1}
	\int V(x)\,\pi_{k-}^N(dx)
	+ C_pb_3\left[1+\frac{C_p\kappa^{2p}a_2}{\varepsilon^{2p}a_1}\right] 
	\|Y_k\|^p + b_4 - 
	\frac{C_p^2\kappa^{2p}a_2b_2b_3}{\varepsilon^{2p}a_1b_1}.
$$
Finally, note that
\begin{equation*}
\begin{split}
	\mathbf{E}(\|Y_k\|^p)&\le
	C_p\{\mathbf{E}(\|h(X_k)\|^p)+\mathbf{E}(\|\sigma(X_k)\,\xi_k\|^p)\}
	\\ &\le
	C_p\{\mathbf{E}(V(X_k))/b_1-b_2/b_1+
	\varepsilon^{-p}\mathbf{E}(\|\xi_k\|^p)\},
\end{split}
\end{equation*}
which is bounded uniformly in $k$ by our assumptions.  Therefore
$$
	\mathbf{E}\pi_k^N(V) \le
	\frac{C_p^2\kappa^{2p}a_2b_3}{\varepsilon^{2p}a_1b_1}
	\,\mathbf{E}\pi_{k-}^N(V)
	+ C_pb_3\left[1+\frac{C_p\kappa^{2p}a_2}{\varepsilon^{2p}a_1}\right] 
		\sup_{k\ge 0}\mathbf{E}(|Y_k|^p)
	+ b_4 - \frac{C_p^2\kappa^{2p}a_2b_2b_3}{\varepsilon^{2p}a_1b_1},
$$
which completes the proof.
\end{proof}

\bibliographystyle{abbrv}
\bibliography{ref}

\end{document}